\documentclass[11pt,reqno,a4paper]{amsart}
\usepackage[T1]{fontenc}
\usepackage[utf8]{inputenc}
\usepackage{lmodern}
\usepackage{amsmath,amssymb,amsthm,mathtools}
\usepackage[margin=29mm]{geometry}
\usepackage{microtype}
\usepackage[numbers,sort&compress]{natbib}
\usepackage{xcolor}
\usepackage{etoolbox}
\usepackage[colorlinks=true,linkcolor=black,citecolor=black,urlcolor=black]{hyperref}
\makeatletter
\renewcommand*{\l@subsection}{\@tocline{2}{0pt}{2.5pc}{5pc}{}}
\patchcmd{\@setauthors}{\MakeUppercase{\authors}}{\authors}{}%
  {\PackageError{spherical-author-layout}{Unable to patch author capitalization}{}}
\patchcmd{\@setauthors}{\centering\footnotesize}{\centering\normalsize}{}%
  {\PackageError{spherical-author-layout}{Unable to patch author font size}{}}
\makeatother
\newtheorem{theorem}{Theorem}[section]
\newtheorem{proposition}[theorem]{Proposition}
\newtheorem{lemma}[theorem]{Lemma}
\newtheorem{corollary}[theorem]{Corollary}
\theoremstyle{definition}
\newtheorem{definition}[theorem]{Definition}
\newtheorem{remark}[theorem]{Remark}
\newtheorem{example}[theorem]{Example}
\numberwithin{equation}{section}
\newcommand{\R}{\mathbb R}
\newcommand{\Mloc}{\mathcal M_{\mathrm{loc}}}
\newcommand{\dd}{\,\mathrm d}

\newcommand{\Kplus}{K_+}
\DeclareMathOperator{\Argmin}{Argmin}

\DeclareMathOperator{\sgn}{sgn}
\allowdisplaybreaks[2]
\title[Radial pressureless Euler flow]{Radial pressureless Euler flow: structure and stability of absorption at the origin}
\author[Gaowei Cao]{Gaowei Cao\textsuperscript{*}}
\thanks{\textsuperscript{*}Gaowei Cao: Hetao Institute of Mathematics and Interdisciplinary Sciences, Shenzhen 518038, China. E-mail: \href{mailto:caogaowei@himis-sz.cn}{\nolinkurl{caogaowei@himis-sz.cn}}.}
\author[Feimin Huang]{Feimin Huang\textsuperscript{\textdagger}}
\thanks{\textsuperscript{\textdagger}Feimin Huang: Corresponding author. Academy of Mathematics and Systems Science, Chinese Academy of Sciences, Beijing 100190, China; School of Mathematical Sciences, University of Chinese Academy of Sciences, Beijing 100049, China. E-mail: \href{mailto:fhuang@amt.ac.cn}{\nolinkurl{fhuang@amt.ac.cn}}.}
\author[Guirong Tang]{Guirong Tang\textsuperscript{\textdaggerdbl}}
\thanks{\textsuperscript{\textdaggerdbl}Guirong Tang: School of Mathematical Sciences, Capital Normal University, Beijing 100048, China. E-mail: \href{mailto:tangguirong@amss.ac.cn}{\nolinkurl{tangguirong@amss.ac.cn}}.}
\makeatletter
\let\addresses\@empty
\makeatother
\subjclass[2020]{35L65, 35D30, 35A01, 35A02, 49J52}
\keywords{Pressureless Euler equations, spherical symmetry, entropy solutions, absorption at the origin, Stieltjes boundary impulse measure, stability of absorbed mass}
\date{}

\begin{document}
\begin{abstract}
We study absorption at the origin in radial pressureless Euler flow in $\R^n$, $n\geq2$.
For rotation-invariant nonnegative locally finite initial mass and bounded radial initial velocity vanishing at the origin, we prove global existence and uniqueness in a full-space radial entropy class with a one-sided Lipschitz bound and an initial kinetic-energy trace.
Absorption is permanent, including retention of any initial origin atom.

For finite positive mass with finite second moment, a Lagrangian projection representation characterizes the absorbed mass and its left limits at positive times through extreme minimizers of a free-position primitive.
We derive a radial scalar momentum balance with a nonnegative Stieltjes boundary impulse measure.
The minimum value of the same primitive yields a weighted Stieltjes identity equating accumulated time-weighted impulse with the integral of initial radii over absorbed labels, valid also for jumps and singular continuous absorption; the boundary impulse measure and the absorption measure are mutually absolutely continuous on $(0,\infty)$.

For such finite-mass data with common total mass, strong $L^2$ convergence of initial quantiles and Lagrangian velocities, together with convergence of initial origin masses, yields convergence of absorbed mass at continuity times of the limiting absorption and local weak convergence of absorption measures on $(0,\infty)$.
Finally, even with a fixed smooth initial radial probability density, every probability measure compactly supported in $(0,\infty)$ can be realized as the absorption measure by choosing a bounded radial initial velocity; in particular, purely singular continuous absorption occurs.
\end{abstract}
\maketitle
\vspace{-18pt}
\setcounter{tocdepth}{2}
\begingroup
\makeatletter
\patchcmd{\@starttoc}{\addvspace{32\p@\@plus14\p@}}{\addvspace{14\p@\@plus14\p@}}{}%
  {\PackageError{spherical-contents-layout}{Unable to adjust contents spacing}{}}
\makeatother
\fontsize{10}{11.5}\selectfont
\tableofcontents
\endgroup
\clearpage
\section{Introduction and main results}\label{sec:introduction}

We consider the pressureless Euler system
\begin{equation}\label{eq:euler}
 \partial_t\rho+\operatorname{div}(\rho\mathbf u)=0,\qquad
 \partial_t(\rho\mathbf u)+\operatorname{div}
       (\rho\mathbf u\otimes\mathbf u)=0
 \quad\hbox{in }(0,\infty)\times\R^n,
\end{equation}
for rotation-invariant, nonnegative locally finite mass in dimension $n\ge2$,
with radial velocity and no tangential component. In the sticky evolution
of finitely many spherical shells, a collision at positive radius
preserves the total radial momentum of the colliding shells and gives the merged shell their
mass-weighted mean velocity. A shell reaching the origin instead stops and remains there,
preserving mass but contributing an impulse to the radial scalar momentum balance.
This finite-shell description motivates our study of absorption for general radial mass measures.
We characterize the amount and timing of absorption, the associated radial scalar momentum
balance, and the stability of the absorbed mass under perturbations of the initial data.

To treat spherical shells, mass at the origin, and singular continuous radial mass
in a single representation, we use the physical radial mass measure
\begin{equation}\label{eq:intro-radial}
 \mu_t=|\cdot|_\#\rho_t,\qquad
 \rho_t=\int_{[0,\infty)}\sigma_r\,\mu_t(\dd r),\qquad
 \sigma_0=\delta_{\mathbf0}.
\end{equation}
Here $\sigma_r$ is the uniform probability measure on the sphere of radius
$r>0$. In particular, $\mu_t(\{0\})=\rho_t(\{\mathbf0\})$.
The origin is a boundary only for the radial coordinate, not for the full-space system.
We therefore distinguish the radial scalar momentum from the full-space vector momentum
when formulating the balance at the origin.

At the origin, angular averaging gives mass tests $\phi$ with $\phi_r(t,0)=0$
and radial momentum tests $\psi$ with $\psi(t,0)=0$.
These distinct conditions enter the radial weak formulation; the initial kinetic-energy trace
is imposed separately in the entropy class of Definition~\ref{def:entropy}.
Both the weak formulation and this trace must account for a possible origin atom.

The absorbed mass need not vary absolutely continuously in time: it may jump or increase
singular continuously. These features require a description of absorption and its radial
momentum contribution that accommodates the different types of temporal behavior.
We also study the conditions under which the absorbed mass is stable.

The variational description of one-dimensional pressureless flow goes
back to E, Rykov and Sinai \cite{ERS1996}. Brenier and Grenier
\cite{BG1998} relate sticky-particle dynamics to a scalar conservation law for
cumulative mass. Moutsinga \cite{Moutsinga2008} gives a convex-hull
construction and describes the evolution of cluster labels under a
negative-jump assumption on the initial velocity.
For an initial probability measure with finite second moment and square-integrable
initial velocity, Carrillo and Galtung
\cite{CG2026} identify scalar entropy solutions with Lagrangian gradient-flow
solutions. Their Oleinik E-condition on the cumulative flux is distinct
from the one-sided Lipschitz (OSL) bound \eqref{eq:radial-osl} on velocity used here.

Two parts of the one-dimensional measure-valued theory play distinct roles in our analysis.
Huang and Wang \cite{HW2001} prove Eulerian
existence and uniqueness for locally finite mass and bounded initial
velocity, with a velocity OSL bound and an initial
kinetic-energy trace. For finite mass with finite second moment and bounded initial velocity,
Natile and Savar\'e
\cite{NS2009} provide a whole-line Lagrangian projection representation in which
velocity is the conditional expectation of the initial velocity given the current position.
We use the Eulerian theory to establish radial existence and uniqueness after verifying
the reflection correspondence. Under the additional mass and moment assumptions,
we derive a radial projection representation of the same entropy solution and use it
to identify the absorbed labels.

Reflection and absorption already occur in the sticky-boundary theory
of Tudorascu \cite{Tudorascu2023}; Hynd and Tudorascu \cite{HT2024}
describe stopped Lagrangian maps and their long-time behavior.
For radial shadow waves, Nedeljkov, Neumann, Oberguggenberger and Sahoo
\cite{NNOS2017} study origin mass growth, shell arrival and radial
momentum exchange through boundary traces. The half-line inflow problem
of Neumann, Oberguggenberger, Sahoo and Sen \cite{NOSS2022} also has an
absorbing branch whose momentum formula subtracts the initial momentum
of the absorbed labels. The independently prescribed boundary inflow
in that problem is absent here. These works provide antecedents for reflection,
permanent absorption and the total momentum identity.
Our focus is the relation between the time-dependent absorbed mass and the Stieltjes boundary impulse measure,
including its atomic and atomless parts.

Existence and uniqueness depend on the chosen solution class. In general
multidimensional sticky dynamics, Bressan and Nguyen \cite{BN2014}
exhibit smooth compactly supported initial densities and continuous
velocities for which their Lagrangian solution class has no local
solution or has more than one. Here the full-space and half-line weak and entropy classes are
defined independently in Definitions~\ref{def:full-entropy} and~\ref{def:entropy}.
Proposition~\ref{prop:radial} proves their equivalence.
The full-space entropy class requires, in addition to the weak formulation
and its time-representative requirements, the vector OSL bound
\eqref{eq:full-osl} outside a common null set of positive times and the
initial kinetic-energy trace \eqref{eq:full-initial-energy-trace}.
No particle or projection representation is prescribed.
The following theorem establishes existence and uniqueness in this class
without a total-mass or moment restriction.

\begin{theorem}[Existence and uniqueness of radial entropy solutions]
\label{thm:main}
Let $n\geq2$, and let $\rho_0$ be a rotation-invariant nonnegative
locally finite Radon measure on $\R^n$. Set $\mu_0=|\cdot|_\#\rho_0$.
Let $u_0\in L^\infty([0,\infty),\mu_0)$ with $u_0(0)=0$, and prescribe
the radial initial velocity
\[
 \mathbf u_0(x)=u_0(|x|)\frac{x}{|x|}\quad(x\ne\mathbf0),
 \qquad \mathbf u_0(\mathbf0)=\mathbf0.
\]
Then \eqref{eq:euler}, with initial data $(\rho_0,\mathbf u_0)$, admits
a global radial entropy solution $(\rho,\mathbf u)$ in the sense of
Definition~\ref{def:full-entropy}. Within this class, for every $t\geq0$,
the mass $\rho_t$ and vector momentum $\mathbf u(t,\cdot)\rho_t$ are
uniquely determined; consequently, $\mathbf u(t,\cdot)$ is unique
$\rho_t$-almost everywhere and satisfies
$\|\mathbf u(t,\cdot)\|_{L^\infty(\rho_t)}
\leq\|u_0\|_{L^\infty(\mu_0)}$.
No uniqueness of velocity representatives in vacuum is asserted.
\end{theorem}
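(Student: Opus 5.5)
The plan is to reduce the radial full-space problem to a one-dimensional problem on the whole line by odd/even reflection, and then invoke the Huang--Wang theory \cite{HW2001}.

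\textbf{Step 1: the reflected line data.} On $\R$, define the reflected initial measure $\bar\mu_0$ as the even extension of $\mu_0$: keep $\mu_0(\{0\})$ as an atom at $0$ and symmetrize the part on $(0,\infty)$. Define $\bar u_0$ as the odd extension of $u_0$. Since $u_0(0)=0$, $\bar u_0$ is odd, bounded, and $\|\bar u_0\|_{L^\infty}\le\|u_0\|_{L^\infty(\mu_0)}$. The measure $\bar\mu_0$ is locally finite, so \cite{HW2001} yields a unique one-dimensional entropy solution $(\bar\mu,\bar u)$. This solution satisfies the OSL bound $\bar u(t,y)-\bar u(t,z)\le (y-z)/t$, the initial kinetic-energy trace, and the maximum principle $|\bar u|\le\|u_0\|_\infty$.

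\textbf{Step 2: symmetry of the line solution.} The map $(\mu,u)(t,y)\mapsto(\mu,-u)(t,-y)$ preserves the one-dimensional entropy class. Uniqueness therefore shows that $\bar\mu_t$ is even and $\bar u\bar\mu_t$ is odd. In particular, the momentum has no atom at $0$. The restriction to $[0,\infty)$ then defines $\mu_t$ and the radial velocity $u$.

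\textbf{Step 3: lifting to $\R^n$.} Set $\rho_t=\int\sigma_r\,\mu_t(\dd r)$ and $\mathbf u=u(t,|x|)x/|x|$, so that the velocity vanishes on the origin atom. This is the content of Proposition~\ref{prop:radial}, which gives the correspondence between the full-space class of Definition~\ref{def:full-entropy}, the half-line class of Definition~\ref{def:entropy}, and the even/odd line class. Taking that correspondence as given, existence follows directly.

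The three defining properties must each transfer across the correspondence:
\begin{itemize}
\item \emph{Weak formulation.} Radial test functions arise from angular averaging, giving mass tests with $\phi_r(t,0)=0$ and momentum tests with $\psi(t,0)=0$. Even and odd extensions of these are admissible line tests. Conversely, for a symmetric line solution, testing against the antisymmetric or symmetric part of a general test is automatic.
\item \emph{Vector OSL bound.} The bound $(\mathbf u(x)-\mathbf u(y))\cdot(x-y)\le|x-y|^2/t$ reduces to the radial OSL bound together with the angular term $u(r)/r\le 1/t$. The angular term follows from the line OSL applied to the pair $\pm r$, since oddness gives $2u(r)\le 2r/t$.
\item \emph{Kinetic-energy trace.} The trace transfers because the radial kinetic energy $\int|\mathbf u|^2\,\dd\rho_t$ equals $\int u^2\,\dd\mu_t$. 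The origin atom contributes nothing to either side.
\end{itemize}

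\textbf{Step 4: uniqueness.} Take any full-space radial entropy solution. Its radial pushforward, symmetrically extended, is a one-dimensional entropy solution with data $(\bar\mu_0,\bar u_0)$, so it coincides with $(\bar\mu,\bar u)$ by \cite{HW2001}. Hence $\mu_t$ and $u\mu_t$ are determined for every $t$, and so $\rho_t$ and $\mathbf u\rho_t$ are determined. The velocity is then unique $\rho_t$-a.e., with the origin atom carrying zero velocity, and the $L^\infty$ bound is inherited from the line solution.

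\textbf{Main obstacle.} The delicate point is the behaviour at the origin in Steps 3 and 4. A general full-space solution must be shown to project to a symmetric line solution whose momentum has no atom at $0$. One must also check that the one-dimensional weak formulation holds for test functions that do not satisfy $\phi_r(t,0)=0$.

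To handle this, I would use the symmetry of the extension: the odd part of the flux cancels at $0$. The only potential defect is a Dirac contribution at $0$ in the momentum equation. That contribution equals the momentum carried by the origin atom, and I would show it is zero using the $\rho_t$-a.e. vanishing of $\mathbf u$ at the origin, which is forced by rotation invariance of vector momentum. The required time-representative conditions hold for every $t$ via weak continuity.
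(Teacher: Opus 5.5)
Your architecture matches the paper's: reflect to the line, apply Huang--Wang, get evenness of $\lambda_t$ and oddness of $p_t$ from uniqueness under $x\mapsto-x$, and return through Proposition~\ref{prop:radial}. The obstacle you single out at the origin is not where the work lies. Split a line test into even and odd parts: the even part of a mass test has zero slope at $0$, the odd part of a momentum test vanishes there, and the remaining origin terms carry the factor $v(t,0)=0$. This says only that the origin atom carries no momentum. Radial scalar momentum is not conserved against tests with $\psi(t,0)\neq0$, because absorption produces the boundary impulse $\mathcal I$. The real gap is the interface with Theorem~\ref{thm:hw-tool}, which you use as if it were stated in the Euler entropy class.

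\emph{The mass primitive.} Huang--Wang uniqueness is formulated for a mass primitive $m$ with $\partial_xm=\lambda_t$, $\partial_tm=-v\lambda_t$ and a fixed initial trace $F_0$. Your Step~2 (applied to the reflected solution) and your Step~4 (applied to an arbitrary reflected solution) therefore need such a primitive for locally finite, possibly infinite, mass. This is not automatic here. For the cumulative primitive $\lambda_t([0,x))$, the defect $\partial_tF+v\lambda_t$ is a spatially constant flux from $(-\infty,0)$ into $[0,\infty)$. For a symmetric solution this flux equals $\tfrac12\,\dd M$, which is nonzero as soon as mass reaches the origin. Moreover, reflecting a primitive shifts its initial trace by the constant $\lambda_0(\{0\})$. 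Lemma~\ref{lem:normalized-primitive} supplies the correctly normalized primitive.

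\emph{The velocity representative.} The definitions require a jointly Borel velocity that represents $p_t$ at every time and satisfies the all-pairs OSL bound off a single null set of times; this is Lemma~\ref{lem:osl-representative}. Your step \emph{oddness gives $2u(r)\le 2r/t$} also needs a representative that is odd everywhere with $v(t,0)=0$, whereas uniqueness gives oddness only $\lambda_t$-a.e. Take $\tfrac12(w(t,x)-w(t,-x))$ and check that it keeps the OSL bound.

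\emph{The reflection normalization.} One reading of \emph{symmetrize} halves the positive-radius mass and keeps the origin atom whole, as in \eqref{eq:reflection-mass}. Under it, the inverse is $\mu_t=|\cdot|_\#\lambda_t$, not restriction to $[0,\infty)$. The other reading doubles that mass, so that restriction does invert it. But then the line mass equation fails under absorption: mass $2m$ on $\pm r$ would become $m$ at $0$.
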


The proof passes to the physical radial mass and then, by reflection,
to the symmetric whole-line problem. The reflection is
\begin{equation}\label{eq:intro-reflect}
 \begin{gathered}
 \lambda_t=\tfrac12\bigl((j_+)_\#\mu_t+(j_-)_\#\mu_t\bigr),
 \qquad j_\pm(r)=\pm r,\\
 v(t,x)=\sgn(x)u(t,|x|),\qquad v(t,0)=0.
 \end{gathered}
\end{equation}
Proposition~\ref{prop:radial} identifies the full-space and half-line
weak and entropy formulations. Theorem~\ref{thm:reflection} verifies the reflection for the
weak equations, the origin atom, the OSL bound across zero and the initial
traces, allowing us to apply the Huang--Wang theorem.

In what follows, $(\mu,u)$ denotes the radial representation of the
full-space solution $(\rho,\mathbf u)$: $\mu$ is its physical radial mass
and $u$ its scalar radial velocity, related by \eqref{eq:intro-radial}
and \eqref{eq:radial-velocity}. Initial data $(\mu_0,u_0)$ are understood
in the same radial coordinates.

\begingroup
Permanent absorption at the origin holds in the locally finite-mass class of
Theorem~\ref{thm:main}. Proposition~\ref{prop:sticking} gives
\[
 \mu_t(\{0\})\geq\mu_s(\{0\})\qquad(0\leq s\leq t).
\]
The proof uses the radial mass balance and the OSL bound to compare positive
times, and the initial kinetic-energy trace to retain any initial origin atom.
\endgroup

For finite nonzero total mass with finite second moment,
the absorbed mass is the length of the zero level set of the
projected position map in mass coordinates. The next theorem identifies
this mass as the largest minimizer of the free-position primitive
and gives the radial scalar momentum balance.

For a continuous function $A$ on a nonempty compact interval $I$, write
\[
 \Argmin_{a\in I}A(a)
 :=\{a\in I:A(a)=\min_{b\in I}A(b)\}.
\]
The expressions $\max\Argmin$ and $\min\Argmin$ denote the largest
and smallest elements of this minimizer set, respectively.

For the next theorem, assume in addition to the hypotheses of
Theorem~\ref{thm:main} that $\mu_0$ has finite nonzero total mass
$m=\mu_0([0,\infty))$ and finite second moment. Define the radial
quantile and initial Lagrangian velocity by
\[
 \begin{gathered}
 R_0(a)=\inf\{r\geq0:\mu_0([0,r])>a\}\quad(0<a<m),
 \qquad U_0=u_0\circ R_0,\\
 L=\|U_0\|_{L^\infty(0,m)},\qquad m_0=\mu_0(\{0\}),
 \end{gathered}
\]
and set
\[
 \begin{gathered}
 \Kplus=\{R\in L^2(0,m):R\geq0\text{ and }R
                  \text{ has a nondecreasing representative}\},\\
 B(a)=\int_0^aR_0(b)\,\dd b,\qquad
 C(a)=\int_0^aU_0(b)\,\dd b,\qquad A_t=B+tC.
 \end{gathered}
\]
The map $R_0+tU_0$ is the free-position map, and $A_t$ is its
primitive with respect to the mass label.
Here $P_{\Kplus}$ denotes the metric position projection in $L^2(0,m)$ onto
$\Kplus$.

\begin{theorem}[Absorption structure and radial momentum balance]
\label{thm:main-absorption}
Under the above assumptions and notation, the radial representation
$(\mu,u)$ of the solution in Theorem~\ref{thm:main} has the following
properties.
\begin{enumerate}
\item[\textup{(i)}] For every $t\geq0$,
\begin{equation}\label{eq:intro-project}
 R_t=P_{\Kplus}(R_0+tU_0),\qquad \mu_t=(R_t)_\#\dd a.
\end{equation}
Its absorbed mass $M(t)=\mu_t(\{0\})$ is nondecreasing and right
continuous on $[0,\infty)$, with $M(0)=m_0$, and
\begin{equation}\label{eq:intro-absorption}
 \begin{aligned}
 M(t)&=\max\Argmin_{a\in[m_0,m]}A_t(a)&& (t\geq0),\\
 M(t-)&=\min\Argmin_{a\in[m_0,m]}A_t(a)&& (t>0).
 \end{aligned}
\end{equation}
Thus the absorption jump at a positive time is the difference of these
two extreme minimizers.
\item[\textup{(ii)}] The total radial momentum
$P(t)=\int_{[0,\infty)}u(t,r)\,\mu_t(\dd r)$ is nondecreasing and
right continuous on $[0,\infty)$, and satisfies
\begin{equation}\label{eq:intro-total-momentum}
 P(t)=C(m)-C(M(t))\quad(t\geq0).
\end{equation}
Its Stieltjes measure $\mathcal I=\dd P$ on $(0,\infty)$ satisfies
\begin{equation}\label{eq:intro-impulse}
 0\leq\mathcal I\leq L\,\dd M,
 \qquad t\mathcal I=\dd(B\circ M)\quad\text{on }(0,\infty).
\end{equation}
For every $\psi\in C_c^1([0,\infty)\times[0,\infty))$, the radial
scalar momentum satisfies
\begingroup
\begin{equation}\label{eq:intro-boundary-momentum-weak}
 \begin{split}
 &\int_0^\infty\!\int_{[0,\infty)}
       (u\psi_t+u^2\psi_r)\,\dd\mu_t\,\dd t
  +\int_{[0,\infty)}u_0(r)\psi(0,r)\,\dd\mu_0\\
 &\hspace{38mm}+\int_{(0,\infty)}\psi(t,0)\,\mathcal I(\dd t)=0.
 \end{split}
\end{equation}
\endgroup
 No vanishing condition is imposed on $\psi(t,0)$.
\end{enumerate}
\end{theorem}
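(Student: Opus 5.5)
We reduce everything to the symmetric whole-line problem via the reflection \eqref{eq:intro-reflect} and Theorem~\ref{thm:reflection}, and then use the Natile--Savar\'e projection representation. The reflected measure $\lambda_0$ has mass $m$, finite second moment and bounded odd velocity $v_0$. Its whole-line quantile $X_0$ on $(0,m)$ is odd about $m/2$, and on $(m/2,m)$ it is a rescaled $R_0$. Natile--Savar\'e gives $X_t=P_K(X_0+tV_0)$, where $K$ is the cone of nondecreasing maps in $L^2(0,m)$. By uniqueness in Theorem~\ref{thm:main} (Huang--Wang class), this coincides with the reflection of our solution. Because $X_0+tV_0$ is antisymmetric about $m/2$ and $P_K$ commutes with the reflection $a\mapsto m-a$, $x\mapsto -x$, the projection is antisymmetric. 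Restricting to the upper half and rescaling labels, we expect $X_t|_{(m/2,m)}$ to be exactly $P_{\Kplus}(R_0+tU_0)$: the antisymmetric nondecreasing maps correspond to $\Kplus$, with the constraint $R\ge0$ arising from monotonicity across the midpoint. This proves \eqref{eq:intro-project}.

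For the absorption formula we use the standard characterization of $P_{\Kplus}$. Its output is the right derivative of the greatest convex minorant of the primitive $A_t$, subject to the constraint $R\ge0$. Concretely, $R_t$ is the derivative of the convex minorant of $A_t$ taken relative to the lower obstacle given by the constant function at the running minimum. Then $R_t=0$ exactly on $(0,a^*)$, where $a^*$ is the largest minimizer of $A_t$ over $[0,m]$. The labels $[0,m_0)$ have $R_0=0$ and $U_0=0$, so $A_t$ is constant there, which explains the restriction to $[m_0,m]$. This gives $M(t)=\max\Argmin A_t$.

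Monotonicity follows from Proposition~\ref{prop:sticking}, and it can also be seen directly. For $s<t$, if $a$ is a minimizer at time $t$ and $b>a$, then $A_t(b)\ge A_t(a)$. At time $s$ we have $B(b)-B(a)\ge0$, and hence $C(b)-C(a)\ge -(B(b)-B(a))/t$; comparing the two times then gives the required inequality. Right continuity and the left limit $\min\Argmin$ follow from upper semicontinuity of the $\Argmin$ correspondence of the continuous family $A_t$, which is linear in $t$. Along $t_k\downarrow t$ the maximal minimizers converge to a minimizer, and monotonicity forces that limit to be the maximal one. Along $t_k\uparrow t$ they converge to a minimizer that is $\le$ every minimizer, since the minimizer sets are ordered by the monotonicity argument, and hence to the minimal one.

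For (ii), Natile--Savar\'e gives that velocity is the conditional expectation of $V_0$ given $X_t$. On the nonabsorbed labels $(M(t),m)$ the quantity $\int u\,\dd\mu_t$ therefore equals $\int_{M(t)}^m U_0$, while the absorbed labels carry zero velocity. This yields \eqref{eq:intro-total-momentum}, after checking that the conditional-expectation blocks do not straddle $M(t)$; they do not, since $R_t>0$ to the right of $M(t)$ and the blocks are level sets. Monotonicity of $P$ follows from $C(M(t))-C(M(s))\le0$. This in turn follows from the minimizer property at time $t$, namely $A_t(M(t))\le A_t(M(s))$, together with $B(M(t))\ge B(M(s))$. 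The bound $\mathcal I\le L\,\dd M$ is immediate from $|U_0|\le L$.

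For $t\,\mathcal I=\dd(B\circ M)$, set $A^*(t)=\min A_t$. This function is concave in $t$ as a minimum of affine functions. Its derivative is $C(M(t))$ on the right and $C(M(t-))$ on the left, by Danskin's theorem applied with the extreme minimizers. Since $A^*(t)=B(M(t))+tC(M(t))$, differentiating in the Stieltjes sense gives $\dd(B\circ M)+t\,\dd(C\circ M)+C(M)\,\dd t=C(M)\,\dd t$. At jumps one also uses that $A_t$ takes equal values at both extreme minimizers. Hence $\dd(B\circ M)=-t\,\dd(C\circ M)=t\,\mathcal I$. I expect the main obstacle to be making this rigorous for singular continuous $M$, where the product-rule step needs care.

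Finally, \eqref{eq:intro-boundary-momentum-weak} is obtained in Lagrangian form. We write $\int u\psi\,\dd\mu_t=\int_{M(t)}^m \psi(t,R_t)V_t\,\dd a$, using the reflected weak momentum equation away from the origin. We then account for labels entering $[0,M(t)]$ through the boundary term $-\int\psi(t,0)\,\dd(C\circ M)$, approximating $\psi$ by test functions that are constant near $r=0$. The delicate point is the origin, where radial tests need not vanish.
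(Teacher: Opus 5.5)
\textbf{Main gap: the boundary balance.} The gap is in the last claim of the statement, the boundary balance \eqref{eq:intro-boundary-momentum-weak}. Your sketch never says how the pairing is evaluated on the part of the test that does not vanish at $r=0$, and none of the three ingredients you list does this.
\begin{enumerate}
\item[(a)] The rewriting $\int u\psi\,\dd\mu_t=\int_{M(t)}^m\psi(t,R_t)V_t\,\dd a$ is only the push-forward identity. It produces no boundary term.
\item[(b)] A genuinely Lagrangian derivation would need a label-wise momentum equation through collisions at positive radius. Nothing you have established provides one.
\item[(c)] Reducing to tests that are constant in $r$ near the origin does not help by itself. For $\vartheta(t)\chi(r)$ with a fixed cutoff $\chi$ equal to one near zero, the pairing contains the flux term $\int_0^\infty\vartheta\int u^2\chi'\,\dd\mu_t\,\dd t$ and the localized momentum $\int u\chi\,\dd\mu_t$. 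Your formulas for $P$ and $M$ determine neither.
\end{enumerate}
The missing idea is to expand the cutoff rather than localize it. Write $\mathcal L(\psi)$ for the left-hand side of \eqref{eq:intro-boundary-momentum-weak} without the impulse term, and set $\vartheta(t)=\psi(t,0)$. For every $R>0$ and $\chi_R(r)=\chi(r/R)$, the test $\psi-\vartheta\chi_R$ vanishes at $r=0$. Hence \eqref{eq:radial-momentum-weak} gives $\mathcal L(\psi)=\mathcal L(\vartheta\chi_R)$ for every $R$. As $R\to\infty$, finite mass and $|u|\leq L$ make the flux term $O(1/R)$. Dominated convergence turns the remaining terms into $\int_0^\infty P(t)\vartheta'(t)\,\dd t+P(0)\vartheta(0)$. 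Stieltjes integration by parts of the right-continuous BV function $P$ identifies this with $-\int_{(0,\infty)}\vartheta\,\dd\mathcal I$. So the boundary balance follows from the total momentum formula you already have, not from tracking labels entering $[0,M(t)]$.

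\textbf{The rest of the plan.} Otherwise you follow essentially the paper's route:
\begin{itemize}
\item odd reflection in mass coordinates, the Natile--Savar\'e projection, Huang--Wang uniqueness and antisymmetry of the projection for \eqref{eq:intro-project};
\item ordering of minimizers and semicontinuity of $\Argmin$ for $M(t)$ and $M(t-)$;
\item the conditional-expectation computation of $P$;
\item the concave minimum value $h(t)=\min A_t$, where your Danskin argument is the paper's difference-quotient bound.
\end{itemize}
Using the greatest convex minorant for the zero set is a valid alternative to Lemma~\ref{lem:projection-zero}. However, the fact you actually need is $P_{\Kplus}F=(P_KF)_+$, with $K$ the nondecreasing cone in $L^2(0,m)$. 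You should state it and check it from the variational inequality; the phrase about an obstacle at the running minimum is not a precise substitute. The paper's lemma avoids both the convex-minorant characterization and this clipping fact.

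Three smaller repairs are needed.
\begin{itemize}
\item Ordering the minimizer sets needs $B(b)>B(a)$ strictly, not $\geq$. This holds on $[m_0,m]$.
\item Before invoking uniqueness, you must place the Natile--Savar\'e flow in the entropy class. Use the strong right-continuity of its Lagrangian velocity at $t=0$ for the initial kinetic-energy trace, and Lemma~\ref{lem:osl-representative} for an all-pairs OSL representative.
\item Your worry about singular continuous $M$ is unfounded. Since $h$ is Lipschitz with $h'=C\circ M$ almost everywhere, $\dd h=(C\circ M)\,\dd t$, and the product rule with the smooth factor $t$ has no correction term. Equality of $A_t$ at the extreme minimizers is automatic from $B(M(t))=h(t)-tC(M(t))$.
\end{itemize}
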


The Stieltjes absorption measure on $(0,\infty)$ records newly
absorbed mass:
\[
 (\dd M)((s,t])=M(t)-M(s)\qquad(0\leq s<t).
\]
The initial origin mass is retained separately as $M(0)=m_0$.

At a jump, the interval between the two minimizers in
\eqref{eq:intro-absorption} consists, up to endpoints, of the newly absorbed
labels. Their contribution to the jump of $P$ is the negative of their
total initial momentum.

In {\eqref{eq:intro-boundary-momentum-weak}},
$\mathcal I$ supplies its boundary impulse term for tests that may be nonzero at the
origin. The full-space vector momentum satisfies \eqref{eq:euler}
without a source at the origin.

\begingroup
Corollary~\ref{cor:impulse-types} further gives separate formulas for the
atomic and atomless parts of $\mathcal I$ and shows that $\mathcal I$
and $\dd M$ are mutually absolutely continuous on $(0,\infty)$.
\endgroup

\begingroup
On $[m_0,m]$, every minimizer at an earlier time is no larger than any
minimizer at a later time. Together with the uniform convergence
$A_s\to A_t$ as $s\to t$, this gives the left-limit formula in
\eqref{eq:intro-absorption}; see the proof of
Theorem~\ref{thm:absorption}.
\endgroup

\begingroup
The velocity projection \eqref{eq:radial-actual-velocity} gives the total
momentum formula \eqref{eq:intro-total-momentum}. For the minimum-value
function $h$ in \eqref{eq:minimum-value-function},
\[
 h(t)=\min_{a\in[m_0,m]}A_t(a)=B(M(t))+tC(M(t)).
\]
Comparison of its minimum values yields
$\dd h=C(M(t))\dd t$. The BV product rule then gives
$\dd(B\circ M)=t\dd P=t\mathcal I$ on $(0,\infty)$.
This derivation applies to jumps and to both absolutely continuous
and singular continuous absorption.
\endgroup

Together, Theorems~\ref{thm:projection}, \ref{thm:absorption} and
\ref{thm:boundary-momentum} prove Theorem~\ref{thm:main-absorption}.

Section~\ref{sec:absorption} also gives onset, completion,
asymptotic and local-rate criteria.

Convergence of projected positions does not by itself control the
length of the zero level set. With unit total mass and zero
initial velocity, the stationary quantiles $R_t^k\equiv1/k$ converge in
$L^2(0,1)$ to $R_t\equiv0$, while the absorbed masses remain zero and one,
respectively. The next proposition combines strong $L^2$ convergence of the initial
quantiles and Lagrangian velocities with convergence of the initial origin masses.

\begin{proposition}[Stability of absorbed mass]\label{thm:main-stability}
Let $(\mu_0^k,u_0^k)$ and $(\mu_0,u_0)$ each satisfy the assumptions of
Theorem~\ref{thm:main-absorption}, with common total mass $m$.
For their initial quantiles and Lagrangian velocities, assume
\[
 R_0^k\longrightarrow R_0,\qquad U_0^k\longrightarrow U_0
       \quad\text{in }L^2(0,m),\qquad
 m_0^k:=\mu_0^k(\{0\})\longrightarrow m_0:=\mu_0(\{0\}).
\]
Then the absorbed masses of the corresponding solutions satisfy
\[
 M(t-)\leq\liminf_{k\to\infty}M_k(t)
 \leq\limsup_{k\to\infty}M_k(t)\leq M(t)\qquad(t>0).
\]
In particular, $M_k(t)\to M(t)$ at every continuity time of $M$,
including $t=0$, and $\dd M_k\rightharpoonup\dd M$ locally weakly on
$(0,\infty)$, against every test in $C_c((0,\infty))$.
\end{proposition}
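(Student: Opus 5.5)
The plan is to work entirely through the variational characterization in Theorem~\ref{thm:main-absorption}(i): $M_k(t)=\max\Argmin_{[m_0^k,m]}A^k_t$ and $M(t-)=\min\Argmin_{[m_0,m]}A_t$, $M(t)=\max\Argmin_{[m_0,m]}A_t$, with $A^k_t=B^k+tC^k$. First I show $A^k_t\to A_t$ uniformly on $[0,m]$, locally uniformly in $t$. This follows from Cauchy--Schwarz:
\[
|B^k(a)-B(a)|\le\int_0^m|R_0^k-R_0|\,\dd b\le\sqrt m\,\|R_0^k-R_0\|_{L^2},
\]
and similarly for $C^k$. The constraint intervals $I_k=[m_0^k,m]$ converge to $I=[m_0,m]$ in the Hausdorff sense.

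\textbf{Upper bound.} Let $a_k=M_k(t)$ and take a subsequence along which $a_k\to\bar a=\limsup_k M_k(t)$; then $\bar a\in I$. For any $b\in I$, choose $b_k\in I_k$ with $b_k\to b$ (for instance $b_k=\max(b,m_0^k)$). Then $A^k_t(a_k)\le A^k_t(b_k)$. Passing to the limit using uniform convergence and the continuity of $A_t$ gives $A_t(\bar a)\le A_t(b)$. So $\bar a$ is a minimizer of $A_t$ on $I$, and hence $\bar a\le M(t)$.

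\textbf{Lower bound.} This is the main obstacle: limits of the largest minimizers of $A^k_t$ need not land at the smallest minimizer of $A_t$. The same argument only shows that $\liminf_k M_k(t)$ is some minimizer, which gives $\ge M(t-)$ because $M(t-)$ is the smallest one. So the lower bound follows as well. I will check the edge case $m_0^k>m_0$ with $M(t-)=m_0$; it is fine, since the limit point still lies in $I$ and is a minimizer.

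For $t=0$, $M_k(0)=m_0^k\to m_0=M(0)$ directly. At a continuity time $t>0$ the two bounds coincide, so $M_k(t)\to M(t)$.

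\textbf{Weak convergence.} Fix $\varphi\in C_c((0,\infty))$ with support in $[\alpha,\beta]$. Write $\int\varphi\,\dd M_k$ via Stieltjes integration by parts as $-\int M_k\,\dd\varphi$ when $\varphi\in C^1_c$. Since $0\le M_k\le m$ and $M_k\to M$ off the countable discontinuity set of $M$, dominated convergence gives
\[
\int\varphi\,\dd M_k\to\int\varphi\,\dd M .
\]
For general continuous $\varphi$, I approximate uniformly by $C^1_c$ functions supported in a fixed compact subset of $(0,\infty)$. The total masses $\dd M_k((\alpha/2,2\beta])\le m$ are uniformly bounded, which makes the approximation error uniform in $k$ and closes the argument.
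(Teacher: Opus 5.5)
Your proposal is correct and follows the paper's proof essentially step for step. Both use uniform convergence of the primitives via Cauchy--Schwarz, and both use the competitors $\max\{b,m_0^k\}$ on the moving intervals to show that every cluster point of $M_k(t)$ lies in $\Argmin_{[m_0,m]}A_t$, whose extreme elements are $M(t-)$ and $M(t)$. The weak convergence is then obtained the same way: Stieltjes integration by parts, dominated convergence off the countable jump set of $M$, and uniform approximation using the common bound $\dd M_k((0,\infty))\le m$. The lower bound you call the main obstacle is settled by that same cluster-point argument, as you conclude.
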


If the limiting initial mass has no origin atom, $m_0=0$, the
condition $m_0^k\to0$ follows from $R_0^k\to R_0$ in $L^2(0,m)$;
see Section~\ref{subsec:absorption-stability}.

The proof in Section~\ref{subsec:absorption-stability} uses
the strong $L^2(0,m)$ convergence of the initial quantiles and
Lagrangian velocities to obtain uniform convergence of the free-position
primitives on $[0,T]\times[0,m]$ for each finite $T>0$; see
\eqref{eq:primitive-uniform-convergence}. Together with $m_0^k\to m_0$,
this implies that, for each fixed $t>0$, every cluster point of the
minimizers $M_k(t)$ on the moving intervals $[m_0^k,m]$ belongs to
$\Argmin_{a\in[m_0,m]}A_t(a)$. The extreme minimizers in
\eqref{eq:intro-absorption} give the two-sided bound at a jump and convergence
at positive continuity times. Stieltjes integration by parts, this
convergence and the bound $0\leq M_k\leq m$ then give local weak
convergence of the absorption measures on $(0,\infty)$.

Proposition~\ref{prop:absorption-realization} shows
that, for any fixed initial radial mass with a smooth probability
density compactly supported in $(1,2)$, every probability measure
compactly supported in $(0,\infty)$ can be realized as
the absorption measure of the corresponding entropy solution by choosing
a bounded Borel radial initial velocity; in particular, singular
continuous absorption does occur.
This supplies examples for which the singular continuous
part of the Stieltjes formulation is essential.

Section~\ref{sec:framework} introduces physical radial mass, defines
full-space and half-line weak and entropy solutions independently, and
proves the equivalence of the two formulations. Section~\ref{sec:reflection}
proves reflection equivalence, global existence and uniqueness,
using the one-dimensional Eulerian theory and the required adaptations
of primitives and velocity representatives.
Section~\ref{sec:absorption} establishes permanent absorption and
develops the quantitative absorption structure, boundary momentum and stability,
using the finite-mass Lagrangian representation for the quantitative results.

\section{Radial measures and entropy framework}\label{sec:framework}

Throughout the paper, $n\geq2$. We consider rotation-invariant nonnegative
Radon measures on $\mathbb R^n$ and radial velocities without a tangential
component. The latter assumption is explicit: in dimension two, rotation
equivariance alone also permits a tangential velocity.

\subsection[Physical radial mass and velocity]{Physical radial mass and velocity}

The physical radial mass measure treats atomless mass, atoms at positive radii, and the
origin atom within a single representation.
Let $\sigma_r$ be the uniform probability measure on the sphere of radius
$r>0$, and set $\sigma_0=\delta_{\mathbf0}$. If $\rho$ is rotation invariant,
its physical radial mass measure is
\begin{equation}\label{eq:physical-radial-mass}
 \mu=|\cdot|_\#\rho,\qquad
 \rho=\int_{[0,\infty)}\sigma_r\,\mu(\mathrm dr).
\end{equation}
Both measures are locally finite, and $\rho(\{\mathbf0\})=\mu(\{0\})$.
Indeed, averaging a compactly supported continuous function over rotations
gives its spherical average; integration against $\rho$ then proves
\eqref{eq:physical-radial-mass}. This also proves uniqueness of the
representation. No decomposition of the singular radial mass into atoms is required. For example, an atomless Cantor measure on the radius
interval $[1,2]$ defines a singular continuous spatial mass through
\eqref{eq:physical-radial-mass}; it assigns zero mass to every individual
sphere. Such data are included in the same framework.

For a scalar radial velocity $u$, we write
\begin{equation}\label{eq:radial-velocity}
 \mathbf u(t,x)=u(t,|x|)\frac{x}{|x|}\quad(x\ne\mathbf0),
 \qquad \mathbf u(t,\mathbf0)=\mathbf0,
 \qquad u(t,0)=0.
\end{equation}
The value at the origin is part of the velocity convention and is required
on an origin atom. In particular, such an atom has zero momentum and zero
kinetic energy. If $\rho$ has a density $f(|x|)$ away from the origin, then
$\mu(\mathrm dr)=|\mathbb S^{n-1}|r^{n-1}f(r)\,\mathrm dr$ there.
We do not require the measure obtained by dividing $\mu|_{(0,\infty)}$ by
$r^{n-1}$ to have finite mass near $r=0$.

\subsection[Full-space and radial weak and entropy solutions]{Full-space and radial weak and entropy solutions}

\begingroup
We first define the full-space radial class directly in terms of spatial
mass and vector momentum, and then give the half-line definition.
In the full-space formulation, local weak convergence is tested against
$C_c(\mathbb R^n)$, or its vector-valued counterpart for momentum.
In the full-space weak equations, smooth scalar tests $\Phi$ and vector
tests $\Psi$ on $[0,\infty)\times\mathbb R^n$ mean restrictions of
compactly supported smooth functions on $\mathbb R\times\mathbb R^n$;
they may be nonzero at $t=0$, and no radial symmetry or condition at
the spatial origin is imposed on them.

Fix full-space initial data
\begin{equation}\label{eq:full-data}
 \rho_0\in\mathcal M^+_{\mathrm{loc}}(\mathbb R^n),\qquad
 \mathbf u_0\in L^\infty(\rho_0;\mathbb R^n),\qquad
 \mathbf u_0(\mathbf0)=\mathbf0,
\end{equation}
where $\rho_0$ is rotation invariant and $\mathbf u_0$ has a bounded
Borel representative that is radial without a tangential component.
Explicitly, the chosen representative satisfies
\[
 \mathbf u_0(Qx)=Q\mathbf u_0(x)\quad(Q\in SO(n),\ x\in\mathbb R^n),
 \qquad \mathbf u_0(x)\in\mathbb R x\quad(x\ne\mathbf0).
\]
The origin condition is a condition on the velocity at an origin atom,
and fixes its value there otherwise; it imposes no continuity at the origin.

\begin{definition}[Full-space radial weak and entropy solutions]
\label{def:full-entropy}
A full-space radial weak solution with data \eqref{eq:full-data}
is a pair $(\rho,\mathbf u)$, where each $\rho_t$, $t\geq0$, is a
nonnegative locally finite rotation-invariant Radon measure on
$\mathbb R^n$, and
$\mathbf u:[0,\infty)\times\mathbb R^n\to\mathbb R^n$ is jointly Borel.
The chosen velocity representative has the form
\eqref{eq:radial-velocity} at every point and satisfies
$\sup_{t\geq0,\,x\in\mathbb R^n}|\mathbf u(t,x)|\leq U<\infty$.
The mass $\rho_t$ and vector momentum $\mathbf u(t,\cdot)\rho_t$ are
locally weakly continuous on $[0,\infty)$, with respective initial
values $\rho_0$ and $\mathbf u_0\rho_0$.
The weak equations are
\begingroup
\begin{align}
 &\int_0^\infty\!\int_{\mathbb R^n}
  (\Phi_t+\mathbf u\cdot\nabla\Phi)\,\mathrm d\rho_t\,\mathrm dt
  +\int_{\mathbb R^n}\Phi(0,x)\,\mathrm d\rho_0=0,
       \label{eq:full-mass-weak}\\
 &\int_0^\infty\!\int_{\mathbb R^n}
  \bigl(\mathbf u\cdot\Psi_t+
     (\mathbf u\otimes\mathbf u):D\Psi\bigr)
     \,\mathrm d\rho_t\,\mathrm dt
  +\int_{\mathbb R^n}\mathbf u_0\cdot\Psi(0,x)
       \,\mathrm d\rho_0=0,\label{eq:full-momentum-weak}
\end{align}
\endgroup
for all smooth scalar tests $\Phi$ and vector tests $\Psi$ on
$[0,\infty)\times\mathbb R^n$ in the sense specified above.

It is a full-space radial entropy solution if, in addition, there is a
Lebesgue-null set $N\subset(0,\infty)$, independent of $x$ and $y$, such that
the chosen representative satisfies
\begin{equation}\label{eq:full-osl}
 (\mathbf u(t,x)-\mathbf u(t,y))\cdot(x-y)
 \leq\frac{|x-y|^2}{t}
 \quad(x,y\in\mathbb R^n)
\end{equation}
for every $t\in(0,\infty)\setminus N$, and
\begin{equation}\label{eq:full-initial-energy-trace}
 |\mathbf u(t,\cdot)|^2\rho_t\rightharpoonup
 |\mathbf u_0|^2\rho_0\qquad(t\downarrow0).
\end{equation}
\end{definition}

\par\endgroup

For the half-line formulation, fix initial data
\begin{equation}\label{eq:radial-data}
 \mu_0\in\mathcal M^+_{\mathrm{loc}}([0,\infty)),\qquad
 u_0\in L^\infty(\mu_0),\qquad u_0(0)=0.
\end{equation}
The last equality imposes a condition when $\mu_0(\{0\})>0$ and fixes
the velocity value at the origin otherwise. Put $q_0=u_0\mu_0$ and
$e_0=u_0^2\mu_0$. We use twice the kinetic energy in the notation $e_0$.
All weak convergences of locally finite measures below are tested against
compactly supported continuous functions.

Angular averaging gives different boundary compatibility conditions for
mass and momentum tests. We retain these conditions in the time
representatives and impose the initial kinetic-energy trace separately.

\begin{definition}[Half-line radial weak and entropy solutions]
\label{def:entropy}
A radial weak solution with data \eqref{eq:radial-data} consists of a
locally weakly continuous curve
$\mu:[0,\infty)\to\mathcal M^+_{\mathrm{loc}}([0,\infty))$ with
$\mu|_{t=0}=\mu_0$, and a jointly Borel velocity
$u:[0,\infty)\times[0,\infty)\to\mathbb R$ whose chosen representative
satisfies $u(t,0)=0$ for all $t\geq0$ and
$\sup_{t\geq0,\,r\geq0}|u(t,r)|\leq U<\infty$.
Write $q_t=u(t,\cdot)\mu_t$ for the radial scalar momentum measure and
set $e_t=u(t,\cdot)^2\mu_t$.
The radial momentum $q_t$ is continuous on $[0,\infty)$ against every
$\psi\in C_c([0,\infty))$ with $\psi(0)=0$, with initial value $q_0$.
The equations are
\begin{align}
 &\int_0^\infty\!\int_{[0,\infty)}
     (\phi_t+u\phi_r)\,\mathrm d\mu_t\,\mathrm dt
   +\int_{[0,\infty)}\phi(0,r)\,\mathrm d\mu_0=0,
       \label{eq:radial-mass-weak}\\
 &\int_0^\infty\!\int_{[0,\infty)}
     (u\psi_t+u^2\psi_r)\,\mathrm d\mu_t\,\mathrm dt
   +\int_{[0,\infty)}u_0(r)\psi(0,r)\,\mathrm d\mu_0=0,
       \label{eq:radial-momentum-weak}
\end{align}
for all $\phi,\psi\in C_c^1([0,\infty)\times[0,\infty))$ satisfying
$\phi_r(t,0)=0$ and $\psi(t,0)=0$, respectively.
It is an entropy solution if, in addition, there is a Lebesgue-null set
$N\subset(0,\infty)$, independent of $r_1$ and $r_2$, such that the
chosen representative satisfies
\begin{equation}\label{eq:radial-osl}
 u(t,r_2)-u(t,r_1)\leq\frac{r_2-r_1}{t}
 \quad(0\leq r_1<r_2)
\end{equation}
for every $t\in(0,\infty)\setminus N$, and
\begin{equation}\label{eq:initial-energy-trace}
 e_t\rightharpoonup e_0\qquad(t\downarrow0).
\end{equation}
\end{definition}

The continuity requirement for $q_t$ uses tests vanishing at the origin
because the radial scalar momentum of an incoming shell changes when
the shell reaches the origin, whereas its full-space vector momentum has a
continuous weak representative. The restriction on momentum tests in
\eqref{eq:radial-momentum-weak} records this distinction. It is not a
statement of scalar momentum conservation on a half-line without a
boundary interaction.

At a positive-mass radius $r>0$, the velocity is fixed by
$u(t,r)=q_t(\{r\})/\mu_t(\{r\})$; changing a value at such an atom changes
the solution.

\emph{Initial radial scalar momentum trace.}
The initial kinetic-energy trace restores the initial trace of the radial scalar momentum
against all compactly supported continuous tests. To control the
contribution near zero, let $\chi_\delta$ be a cutoff equal to one on
$[0,\delta]$ and supported in $[0,2\delta]$. Uniform local mass bounds
and Cauchy--Schwarz give
\[
 \int\chi_\delta |u_t|\,\mathrm d\mu_t
 \leq\left(\int\chi_\delta\,\mathrm d\mu_t\right)^{1/2}
      \left(\int\chi_\delta\,\mathrm de_t\right)^{1/2}.
\]
First letting $t\downarrow0$ and then $\delta\downarrow0$ makes the
right-hand side vanish, since $e_0(\{0\})=0$. Away from zero the
required convergence follows from the momentum representative in
Definition~\ref{def:entropy}. Splitting a general test with
$\chi_\delta$ proves $q_t\rightharpoonup q_0$ as $t\downarrow0$.
This conclusion at the initial time does not impose scalar momentum
continuity at later absorption times.

\emph{Extension of mass tests.}
For the mass identity, the derivative compatibility at the origin can be
removed. In particular, it can be tested with every compactly supported $C^1$
function, without the condition $\phi_r(t,0)=0$. To see this, modify a
test on $0\leq r\leq2\varepsilon$ so that its radial derivative vanishes at
zero, while keeping its value and time derivative uniformly convergent and
its radial derivative uniformly bounded. The only additional error is
bounded by a constant times
$\int_0^T\int_{(0,2\varepsilon)}|u|\,\mathrm d\mu_t\,\mathrm dt$.
This tends to zero by dominated convergence. There is no contribution from
the atom at zero because $u(t,0)=0$.

\subsection[Equivalence of the full-space and radial formulations]{Equivalence of the full-space and radial formulations}

\begingroup
The two independently defined classes agree under radial reconstruction,
including their time representatives and entropy conditions.

\begin{proposition}[Full-space and radial weak and entropy formulations]
\label{prop:radial}
Let $(\rho_0,\mathbf u_0)$ and $(\mu_0,u_0)$ satisfy
\eqref{eq:full-data} and \eqref{eq:radial-data}, respectively, with the
representative conventions specified there, and be related by
\eqref{eq:physical-radial-mass}--\eqref{eq:radial-velocity} at the initial
time. These transformations identify the full-space radial weak
(respectively, entropy) solutions of Definition~\ref{def:full-entropy}
with the radial weak (respectively, entropy) solutions of
Definition~\ref{def:entropy}, at the level of mass and momentum measures
at every time.
\end{proposition}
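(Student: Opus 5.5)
We would prove Proposition~\ref{prop:radial} by checking, item by item, that the correspondence $\rho_t\leftrightarrow\mu_t$ given by \eqref{eq:physical-radial-mass} and $\mathbf u\leftrightarrow u$ given by \eqref{eq:radial-velocity} transports each requirement of Definition~\ref{def:full-entropy} into the matching requirement of Definition~\ref{def:entropy}, and back. The basic tool is angular averaging. For a scalar test $\Phi$ set
\[
 \phi(t,r)=\int\Phi(t,\cdot)\,\mathrm d\sigma_r .
\]
For a vector test $\Psi$ set
\[
 \psi(t,r)=\int\Psi(t,x)\cdot\tfrac{x}{|x|}\,\sigma_r(\mathrm dx)\quad(r>0),\qquad \psi(t,0)=0 .
\]
Conversely, radial tests on the half-line lift to the full space by
\[
 \Phi(t,x)=\phi(t,|x|),\qquad \Psi(t,x)=\psi(t,|x|)\,\frac{x}{|x|}.
\]
For the lift, $\phi_r(t,0)=0$ is what makes $\Phi$ $C^1$ at the origin, and $\psi(t,0)=0$ is what makes $\Psi$ continuous there. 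If needed, we first extend $\phi$ and $\psi$ to be even and odd in $r$, respectively, and mollify, so that the lifted tests are smooth.

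\textbf{Weak equations.} For the direction ``full-space to radial'' we take a radial test $\phi$ or $\psi$, lift it, and substitute into \eqref{eq:full-mass-weak}--\eqref{eq:full-momentum-weak}. Using $\mathbf u=u\,x/|x|$ we get
\[
 \mathbf u\cdot\nabla\Phi=u\phi_r,\qquad (\mathbf u\otimes\mathbf u):D\Psi=u^2\psi_r ,
\]
because the tangential part of $D\Psi$ does not pair with $\hat x\otimes\hat x$. At the origin both sides vanish, since $u(t,0)=0$. This gives \eqref{eq:radial-mass-weak}--\eqref{eq:radial-momentum-weak}.

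For the converse we take an arbitrary smooth $\Phi$ or $\Psi$. Rotation invariance of $\rho_t$ and equivariance of $\mathbf u$ let us replace $\Phi$ and $\Psi$ by their $SO(n)$-averages; the integrals are unchanged. For $n\ge2$ the average of a smooth vector field is radial, $\bar\Psi=\psi(t,|x|)\,x/|x|$ with $\psi(t,0)=0$ and $\psi$ smooth and odd, and the average of $\Phi$ is $\phi(t,|x|)$ with $\phi$ smooth and even, so $\phi_r(t,0)=0$. The radial identities then give the full-space ones.

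\textbf{Continuity in time and initial traces.} Weak continuity of $\rho_t$ against $C_c(\mathbb R^n)$ is equivalent to weak continuity of $\mu_t$ against $C_c([0,\infty))$, by the same averaging and lifting. For the momentum, pair $\mathbf u\rho_t$ with a vector field $G$; after averaging this equals $\int g\,\mathrm dq_t$, where $g$ is the averaged radial component of $G$ and satisfies $g(0)=0$. So continuity of the vector momentum corresponds exactly to continuity of $q_t$ against tests vanishing at the origin. The initial energy traces match because $|\mathbf u|^2=u^2$ pointwise, and they are tested against spherical averages of $C_c$ functions.

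\textbf{Entropy inequality.} From \eqref{eq:full-osl} to \eqref{eq:radial-osl}: take collinear points $x=r_2e$, $y=r_1e$ with $r_1,r_2\ge0$. For the reverse direction, write $x=r_2\omega_2$, $y=r_1\omega_1$ and $c=\omega_1\cdot\omega_2$. Then
\[
 (\mathbf u(x)-\mathbf u(y))\cdot(x-y)=u_2r_2+u_1r_1-c\,(u_2r_1+u_1r_2),\qquad |x-y|^2=r_1^2+r_2^2-2c\,r_1r_2 .
\]
The difference between $|x-y|^2/t$ and the left-hand side is affine in $c\in[-1,1]$, so it suffices to check the two endpoints. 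At $c=1$ this is the radial OSL \eqref{eq:radial-osl} (the case $r_1=r_2$ is trivial). At $c=-1$ it reads
\[
 (u_2+u_1)(r_1+r_2)\le (r_1+r_2)^2/t ,
\]
which follows by applying \eqref{eq:radial-osl} to the pairs $(0,r_2)$ and $(0,r_1)$ and adding, using $u(t,0)=0$. Since the null set $N$ is unchanged, \eqref{eq:full-osl} holds for all $x,y$ outside $N$.

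I expect the main obstacle to be the regularity claims in the weak-equation step: showing that the averaged vector test is a genuine admissible radial test $\psi\in C^1_c$ with $\psi(t,0)=0$, and that lifting a merely $C^1$ radial test produces admissible full-space tests. I would handle this by parity: the averaged vector test has an odd smooth profile and the averaged scalar test an even one. In the lifting direction I would approximate the $C^1$ radial test by even or odd smooth functions, relying on the uniform convergence of the test, its time derivative and $\phi_r$ or $\psi_r$, together with the bound $|u|\le U$, to pass to the limit in the integrals.
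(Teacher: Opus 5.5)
Your plan is essentially the paper's: spherical averaging of full-space tests, radial lifting of half-line tests, the pairings $\mathbf u\cdot\nabla\Phi=u\phi_r$ and $(\mathbf u\otimes\mathbf u):D\Psi=u^2\psi_r$, and collinear points for \eqref{eq:full-osl}$\Rightarrow$\eqref{eq:radial-osl}. For the converse OSL implication, your affine-in-$c$ endpoint check is equivalent to the paper's splitting $(u(r)-u(s))(r-s)+(1-\theta\cdot\eta)(su(r)+ru(s))$.

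\textbf{The false step.} In deriving the full-space equations from the radial ones, you claim that for $n\ge2$ the $SO(n)$-average $\bar\Psi$ of a smooth vector field has the form $\psi(t,|x|)x/|x|$. This fails for $n=2$, which the proposition covers. Plane rotations commute with $x\mapsto x^\perp$, so $\Psi(t,x)=\chi(t,|x|^2)\,x^\perp$ is its own average and has zero radial component. This is the same phenomenon noted at the start of Section~\ref{sec:framework}: in dimension two, equivariance permits a tangential component.

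\textbf{The repair.} It is short and uses the radial component $\psi$ you already defined. Because $\mathbf u$ and $\mathbf u_0$ are radial, for $x\neq\mathbf 0$ one has $\mathbf u\cdot\Psi_t=u\,\hat x\cdot\Psi_t$ and $(\mathbf u\otimes\mathbf u):D\Psi=u^2\,\partial_r(\hat x\cdot\Psi)$. The origin contributes nothing since $\mathbf u(t,\mathbf 0)=\mathbf 0$, so any tangential part of the test is invisible. Better still, drop the group average and disintegrate $\rho_t=\int\sigma_r\,\mu_t(\mathrm dr)$ directly, using $\psi_r(t,r)=\int\theta\cdot D\Psi(t,r\theta)\theta\,\mathrm d\omega(\theta)$, as the paper does. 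Then $\psi$ is an odd smooth profile with $\psi(t,0)=0$, hence an admissible radial test.

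\textbf{The regularity step you flag.} Three details are missing.
\begin{itemize}
\item Mollifying the even/odd extensions in $r$ and then lifting gives $C^\infty$ full-space tests only by Whitney's theorem, which says a smooth even function of $r$ is a smooth function of $r^2$. You need to invoke it.
\item Full-space tests are restrictions of compactly supported smooth functions on $\mathbb R\times\mathbb R^n$. So before mollifying in time you need a $C^1$ extension to $t<0$.
\item The limit passage uses the uniform local mass bound $\sup_{0\le t\le T}\mu_t([0,R])<\infty$, which comes from weak continuity, not only $|u|\le U$.
\end{itemize}
The paper sidesteps Whitney. It checks directly that the lifts are $C^1$: via $\psi(t,r)/r=\int_0^1\psi_r(t,ar)\,\mathrm da$, one gets $D\Psi\to\psi_r(t,0)I$ at the origin. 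It then extends by $3F(-t,x)-2F(-2t,x)$ and mollifies in all variables on $\mathbb R\times\mathbb R^n$. With these adjustments your argument is complete.
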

\par\endgroup

\begin{proof}
\begingroup
\emph{Reconstruction and local bounds.}
The spherical representation \eqref{eq:physical-radial-mass} is unique.
For a chosen full-space radial representative,
fix a unit vector $e_1$ and set
\[
 u(t,r)=\mathbf u(t,re_1)\cdot e_1\quad(r>0),\qquad u(t,0)=0.
\]
Rotation equivariance and the absence of a tangential component make this
scalar value independent of the unit direction used. They also give
\eqref{eq:radial-velocity} at every point. This construction and the
reverse reconstruction preserve joint Borel measurability, the same bound
$U$, and the origin convention. The same observations apply to the
initial data, after choosing a bounded Borel representative of $u_0$.

Scalar spherical averaging and the continuous lift $f(|x|)$ identify the
mass pairings at every time, so local weak continuity of mass is preserved
in either direction. If $T,R<\infty$, a continuous compactly supported
radial cutoff that equals one on $[0,R]$ consequently gives
\[
 \sup_{0\leq t\leq T}\mu_t([0,R])
 =\sup_{0\leq t\leq T}\rho_t(\{x:|x|\leq R\})<\infty.
\]
Together with the velocity bound, this controls the local momentum and
quadratic flux integrals. No bound on total mass is used.

\emph{Weak equations and initial terms.}
\par\endgroup

Let $\omega$ denote the uniform probability measure on $\mathbb S^{n-1}$.
For full-space tests $\Phi$ and $\Psi$, set
\[
 \phi(t,r)=\int_{\mathbb S^{n-1}}\Phi(t,r\theta)\,\mathrm d\omega(\theta),
 \qquad
 \psi(t,r)=\int_{\mathbb S^{n-1}}
       \Psi(t,r\theta)\cdot\theta\,\mathrm d\omega(\theta).
\]
Differentiation under the angular integral gives
\[
 \phi_r=\int\nabla\Phi(t,r\theta)\cdot\theta\,\mathrm d\omega,
 \qquad
 \psi_r=\int\theta\cdot D\Psi(t,r\theta)\theta\,\mathrm d\omega.
\]
Since $\int\theta\,\mathrm d\omega=0$, these are admissible radial tests:
$\phi_r(t,0)=0$ and $\psi(t,0)=0$. The measure representation and the radial
velocity formula yield
\begin{equation}\label{eq:angular-weak-pairings}
\begin{aligned}
 \int_{\mathbb R^n}(\Phi_t+\mathbf u\cdot\nabla\Phi)\,\mathrm d\rho_t
 &=\int_{[0,\infty)}(\phi_t+u\phi_r)\,\mathrm d\mu_t,\\
 \int_{\mathbb R^n}\bigl(\mathbf u\cdot\Psi_t+
   (\mathbf u\otimes\mathbf u):D\Psi\bigr)\,\mathrm d\rho_t
 &=\int_{[0,\infty)}(u\psi_t+u^2\psi_r)\,\mathrm d\mu_t.
\end{aligned}
\end{equation}
The identities in \eqref{eq:angular-weak-pairings} include $r=0$:
the mass term there is
$\phi_t(t,0)\mu_t(\{0\})$, while both momentum terms vanish. The initial
terms satisfy
\begin{equation}\label{eq:angular-initial-pairings}
\begin{aligned}
 \int_{\mathbb R^n}\Phi(0,x)\,\mathrm d\rho_0
   &=\int_{[0,\infty)}\phi(0,r)\,\mathrm d\mu_0,\\
 \int_{\mathbb R^n}\mathbf u_0\cdot\Psi(0,x)\,\mathrm d\rho_0
   &=\int_{[0,\infty)}u_0(r)\psi(0,r)\,\mathrm d\mu_0.
\end{aligned}
\end{equation}
Applying the two radial weak equations to $\phi$ and $\psi$ proves
\eqref{eq:full-mass-weak}--\eqref{eq:full-momentum-weak} for arbitrary
full-space tests.

For the converse, the compatible radial tests $\phi$ and $\psi$ have
lifts
\[
 \Phi(t,x)=\phi(t,|x|),\qquad
 \Psi(t,x)=\psi(t,|x|)\frac{x}{|x|}
 \quad(x\ne\mathbf0),\qquad
 \Psi(t,\mathbf0)=\mathbf0.
\]
\begingroup
The condition $\phi_r(t,0)=0$ makes the scalar lift $C^1$ at the origin,
with spatial derivative zero there. Its time derivative is
$\phi_t(t,|x|)$ and is continuous at the origin as well.
\par\endgroup

For the vector lift, write $r=|x|$ and $\theta=x/r$. At positive radii,
\[
 D\Psi(t,x)
 =\psi_r(t,r)\,\theta\otimes\theta
  +\frac{\psi(t,r)}r\bigl(I-\theta\otimes\theta\bigr)
 \longrightarrow\psi_r(t,0)I\qquad(r\downarrow0).
\]
\begingroup
Indeed,
$\psi(t,r)/r=\int_0^1\psi_r(t,ar)\,\mathrm da$ tends to
$\psi_r(t,0)$ locally uniformly in $t$. The time derivative of the
vector lift extends continuously by zero, since $\psi_t(t,0)=0$.
Both lifts are therefore $C^1$ jointly up to $t=0$ and at the spatial
origin; only $C^1$ regularity of the radial tests is used.

For completeness, each lift $F$, scalar or vector, extends to $t<0$ by
$3F(-t,x)-2F(-2t,x)$. Its value and first derivatives match at $t=0$.
The extension is compactly supported and $C^1$ on
$\mathbb R\times\mathbb R^n$. Mollifying it in all variables gives
smooth tests converging in $C^1$, with supports in one fixed compact set.
Restrict these tests to $t\geq0$. The velocity bound and the uniform
local mass bound control all spacetime integrals; local finiteness of
$\rho_0$ and boundedness of $\mathbf u_0$ control the initial terms.
The full-space weak equations thus hold for the $C^1$ lifts themselves.
Their angular averages are exactly $\phi$ and $\psi$.
Using \eqref{eq:angular-weak-pairings} and
\eqref{eq:angular-initial-pairings} recovers
\eqref{eq:radial-mass-weak}--\eqref{eq:radial-momentum-weak}.

\emph{Time representatives and initial kinetic-energy traces.}
For $f\in C_c(\mathbb R^n)$ and
$G\in C_c(\mathbb R^n;\mathbb R^n)$, their spherical averages
\[
 \overline f(r)=\int f(r\theta)\,\mathrm d\omega(\theta),\qquad
 \overline G(r)=\int G(r\theta)\cdot\theta\,\mathrm d\omega(\theta)
\]
are compactly supported and continuous on $[0,\infty)$, with
$\overline G(0)=0$. At every time, the mass and momentum pairings satisfy
\[
 \int f\,\mathrm d\rho_t=\int\overline f\,\mathrm d\mu_t,\qquad
 \int G\cdot\mathbf u(t,\cdot)\,\mathrm d\rho_t
     =\int\overline G\,\mathrm dq_t.
\]
Conversely, every continuous compactly supported radial mass test $h$
has the continuous lift $h(|x|)$ with spherical average $h$.
Every $k\in C_c([0,\infty))$ with $k(0)=0$ has the continuous vector lift
$k(|x|)x/|x|$, extended by zero at the origin, whose vector spherical
average is $k$. These pairings identify the two time-continuity
requirements and their initial values. Since $q_t(\{0\})=0$, the tests
vanishing at zero also determine $q_t$ completely as a locally finite
signed measure. Hence the correspondence identifies mass and momentum
at every time, not just at almost every time. This does not impose
continuity of $q_t$ against tests nonzero at the origin at positive times.
\par\endgroup

Finally,
\[
 |\mathbf u(t,\cdot)|^2\rho_t
   =\int_{[0,\infty)}\sigma_r\,e_t(\mathrm dr),
 \qquad
 |\mathbf u_0|^2\rho_0
   =\int_{[0,\infty)}\sigma_r\,e_0(\mathrm dr).
\]
\begingroup
Scalar spherical averaging and radial lifting identify the local weak
initial traces in \eqref{eq:full-initial-energy-trace} and
\eqref{eq:initial-energy-trace} in both directions. Neither a finite
total energy nor energy continuity at positive times is required.

\emph{Equivalence of the OSL conditions.}
Fix $t>0$ at which \eqref{eq:radial-osl} holds for every radius pair.
Taking one radius to be zero gives $u(t,r)\leq r/t$ for all $r>0$.
Suppress $t$ in the notation. If $x=r\theta$ and $y=s\eta$ with
$r\geq s>0$ and $\theta,\eta\in\mathbb S^{n-1}$, then
\[
\begin{aligned}
 (\mathbf u(x)-\mathbf u(y))\cdot(x-y)
 &=(u(r)-u(s))(r-s)
   +(1-\theta\cdot\eta)\bigl(su(r)+ru(s)\bigr)\\
 &\leq\frac{(r-s)^2+2rs(1-\theta\cdot\eta)}{t}
  =\frac{|x-y|^2}{t}.
\end{aligned}
\]
For $r>s$ the first term is controlled by the radial OSL condition;
for $r=s$ it is zero. The second term is controlled by
$su(r)+ru(s)\leq2rs/t$ and $1-\theta\cdot\eta\geq0$.
If exactly one point is the origin, the inequality is
$r u(r)\leq r^2/t$; if both are the origin, it is trivial.
Exchanging $x$ and $y$ covers the remaining order of the radii.
Thus \eqref{eq:full-osl} holds for all point pairs at this same time.

Conversely, suppose \eqref{eq:full-osl} holds for all point pairs at a
fixed $t>0$. Set $x=r_2\theta$ and $y=r_1\theta$ with
$0\leq r_1<r_2$; when $r_1=0$, use the prescribed origin values.
The resulting inequality is
\[
 (u(t,r_2)-u(t,r_1))(r_2-r_1)\leq\frac{(r_2-r_1)^2}{t}.
\]
Dividing by $r_2-r_1>0$ proves \eqref{eq:radial-osl}.
Both implications are pointwise at each fixed good time, so the same
exceptional null set works in either formulation. Together with the
weak and trace correspondences above, this proves the entropy equivalence.
\endgroup
\end{proof}

\begingroup
Solutions in either formulation are compared through their mass and
momentum measures at every time, which determines their velocities
almost everywhere with respect to mass. Velocity representatives in
vacuum are chosen to satisfy boundedness, joint Borel measurability,
and the corresponding OSL condition in the entropy class.
\par\endgroup

\section[Existence and uniqueness of radial entropy solutions]{Existence and uniqueness of radial entropy solutions}
\label{sec:reflection}\label{sec:wellposedness}

We prove Theorem~\ref{thm:main} by reflecting the half-line
formulation to the whole line and applying the one-dimensional entropy theory.

\subsection{Reflection equivalence}\label{subsec:reflection-equivalence}

We use the following whole-line Euler entropy class with initial data
$(\lambda_0,v_0)$. The curves
$\lambda_t\geq0$ and $p_t=v_t\lambda_t$ take values in the space of locally
finite measures and are locally weakly continuous, $v$ is jointly Borel and bounded, and
\begin{align}
 &\int_0^\infty\!\int_{\mathbb R}
    (\zeta_t+v\zeta_x)\,\mathrm d\lambda_t\,\mathrm dt
    +\int_{\mathbb R}\zeta(0,x)\,\mathrm d\lambda_0=0,
       \label{eq:line-mass}\\
 &\int_0^\infty\!\int_{\mathbb R}
    (v\eta_t+v^2\eta_x)\,\mathrm d\lambda_t\,\mathrm dt
    +\int_{\mathbb R}v_0(x)\eta(0,x)\,\mathrm d\lambda_0=0
       \label{eq:line-momentum}
\end{align}
for compactly supported $C^1$ tests $\zeta$ and $\eta$ on
$[0,\infty)\times\R$. The entropy requirements are
\begin{equation}\label{eq:line-entropy}
 v(t,y)-v(t,x)\leq\frac{y-x}{t}\quad(x<y)
 \quad t\in(0,\infty)\setminus N,
 \qquad
 v_t^2\lambda_t\rightharpoonup v_0^2\lambda_0\quad(t\downarrow0).
\end{equation}
Here $N\subset(0,\infty)$ is a Lebesgue-null set independent
of $x$ and $y$.
Symmetry means that $\lambda_t$ is even and $p_t$ is odd. This definition
is in terms of the Euler equations and entropy conditions; it does not
prescribe a particle or projection representation.

Reflection must preserve the weak identities, representatives, and entropy
conditions to transfer one-dimensional uniqueness. Put $j_+(r)=r$ and $j_-(r)=-r$, and set
\begin{align}
 \lambda_t&=\frac12\bigl((j_+)_\#\mu_t+(j_-)_\#\mu_t\bigr),
       \label{eq:reflection-mass}\\
 v(t,x)&=\operatorname{sgn}(x)u(t,|x|)\quad(x\ne0),
 \qquad v(t,0)=0.\label{eq:reflection-velocity}
\end{align}
Writing $p_t=v(t,\cdot)\lambda_t$ and
$k_t=v(t,\cdot)^2\lambda_t$, we obtain
\begin{equation}\label{eq:reflection-momentum-energy}
 p_t=\tfrac12\bigl((j_+)_\#q_t-(j_-)_\#q_t\bigr),
 \qquad
 k_t=\tfrac12\bigl((j_+)_\#e_t+(j_-)_\#e_t\bigr).
\end{equation}
In particular, $\lambda_t(\{0\})=\mu_t(\{0\})$ and, when finite, the
total masses of $\lambda_t$ and $\mu_t$ agree. The two reflected origin
atoms are each multiplied by $1/2$.

Parity matches the tests and traces, while OSL across the origin follows
from the radial inequality with one endpoint at zero.

\begin{theorem}[Reflection equivalence]\label{thm:reflection}
Fix radial initial data \eqref{eq:radial-data}, and let $(\lambda_0,v_0)$
be their reflection under
\eqref{eq:reflection-mass}--\eqref{eq:reflection-velocity}.
The same transformation defines a bijection, at the level of mass and
momentum, between
\begin{itemize}
\item radial entropy solutions with these data in the sense of
Definition~\ref{def:entropy};
\item symmetric one-dimensional entropy solutions of
\eqref{eq:line-mass}--\eqref{eq:line-entropy} with data $(\lambda_0,v_0)$.
\end{itemize}
Its inverse on mass is $\mu_t=|\cdot|_\#\lambda_t$.
The correspondence preserves the velocity bound and the initial
kinetic-energy trace. Through Proposition~\ref{prop:radial}, it also
gives a bijection, at the level of mass and momentum, with full-space
radial entropy solutions in the sense of
Definition~\ref{def:full-entropy}, for the corresponding initial data.
\end{theorem}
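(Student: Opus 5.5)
The plan is to verify each component of the entropy class under the forward map \eqref{eq:reflection-mass}--\eqref{eq:reflection-velocity} and under the inverse $\mu_t=|\cdot|_\#\lambda_t$, $u(t,r)=v(t,r)$ for $r\geq0$. After that, the full-space statement follows directly from Proposition~\ref{prop:radial}.

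\emph{Forward direction.} The main tool is parity decomposition of tests. Given a whole-line test $\zeta$, split it as $\zeta=\zeta^e+\zeta^o$ into even and odd parts. Since $\lambda_t$ is even and $v\lambda_t$ is odd, the pairing of an odd $\zeta^o$ against the mass terms vanishes. For the even part, set $\phi(t,r)=\zeta^e(t,r)$ on $r\geq0$. Then $\phi_r(t,0)=0$ automatically, and \eqref{eq:reflection-mass}--\eqref{eq:reflection-momentum-energy} give
\[
\int_{\R}(\zeta_t+v\zeta_x)\,\dd\lambda_t=\int_{[0,\infty)}(\phi_t+u\phi_r)\,\dd\mu_t .
\]
At the origin atom, the weights $\tfrac12+\tfrac12$ recombine to $\mu_t(\{0\})$. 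For momentum, the integrands $v\eta_t$ and $v^2\eta_x$ against $\lambda_t$ kill the even part of $\eta$. The odd part restricts to $\psi(t,r)=\eta^o(t,r)$ with $\psi(t,0)=0$, so \eqref{eq:radial-momentum-weak} gives \eqref{eq:line-momentum}.

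Weak continuity transfers the same way. Mass tests decompose by parity. For momentum, an odd $C_c$ test restricts to a $\psi$ with $\psi(0)=0$, and even tests pair to zero with $p_t$. The energy trace follows from $k_t=\tfrac12((j_+)_\#e_t+(j_-)_\#e_t)$.

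For OSL at a good time $t$, I split into cases.
\begin{itemize}
\item If $0\leq x<y$, the inequality is exactly \eqref{eq:radial-osl}.
\item If $x<y\leq0$, it becomes $u(t,|x|)-u(t,|y|)\leq(|x|-|y|)/t$, which is again \eqref{eq:radial-osl}.
\item If $x<0<y$, write $v(y)-v(x)=u(t,y)+u(t,|x|)$. Taking $r_1=0$ in \eqref{eq:radial-osl} gives $u(t,r)\leq r/t$, so the sum is at most $(y+|x|)/t=(y-x)/t$.
\end{itemize}
Symmetry, boundedness and joint Borel measurability are immediate.

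\emph{Inverse direction.} Let $(\lambda,v)$ be a symmetric entropy solution, with $\lambda_t$ even and $p_t$ odd. I will show it arises this way. Evenness of $\lambda_t$ gives $\lambda_t=\tfrac12((j_+)_\#\mu_t+(j_-)_\#\mu_t)$ with $\mu_t=|\cdot|_\#\lambda_t$.

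Oddness of $p_t$ forces $p_t(\{0\})=0$. It also forces $v(t,-x)=-v(t,x)$ for $\lambda_t$-a.e.\ $x$. The main obstacle is here: the given representative of $v$ need not be odd pointwise, need not vanish at $0$, and may be arbitrary in vacuum. Yet the radial class requires $u(t,0)=0$ for every $t$ together with a pointwise OSL bound.

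I will first try to use $u(t,r)=v(t,r)$ for $r>0$ and $u(t,0)=0$.
\begin{itemize}
\item For $0<r_1<r_2$, OSL is inherited directly.
\item For $r_1=0$, I need $v(t,r)\leq r/t$. For this, combine OSL between $-r$ and $r$, namely $v(r)-v(-r)\leq 2r/t$, with oddness, after replacing $v$ by its odd symmetrization $\tfrac12(v(x)-v(-x))$.
\end{itemize}
The symmetrization preserves OSL, since OSL is invariant under $x\mapsto-x$, $v\mapsto-v$ and under averaging. It leaves $p_t$ unchanged, and it vanishes at $0$. Only the mass and momentum measures are compared, so this change of representative is harmless.

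With this representative, the test computations above run in reverse. A radial test $\phi$ with $\phi_r(t,0)=0$ has even extension $\phi(t,|x|)$, which is $C^1$. A radial test $\psi$ with $\psi(t,0)=0$ has odd extension $\sgn(x)\psi(t,|x|)$, which is $C^1$ because $\psi(t,0)=0$. The weak identities, continuity and traces then transfer. The two maps are mutually inverse on mass and momentum, which establishes the bijection, and Proposition~\ref{prop:radial} gives the full-space statement.
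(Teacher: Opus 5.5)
Your proposal is correct and follows the paper's proof essentially step for step. The shared steps are: parity decomposition of line tests into admissible radial mass and momentum tests, the three-case OSL check with $u(t,r)\le r/t$ across the origin, and, for the converse, the odd symmetrization $\tfrac12\bigl(v(t,x)-v(t,-x)\bigr)$ together with even/odd $C^1$ extensions of radial tests. The only cosmetic difference is that you obtain $\widetilde v(t,r)\le r/t$ from the pair $(-r,r)$, whereas the paper reads it off from the pair $(0,r)$ using $\widetilde v(t,0)=0$.
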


\begin{proof}
For a radial entropy solution $(\mu,u)$, define $(\lambda,v)$ by
\eqref{eq:reflection-mass}--\eqref{eq:reflection-velocity}.
If $\zeta$ and $\eta$ are $C^1$ tests on the line, their even and odd
restrictions are
\[
 \zeta_e(t,r)=\frac{\zeta(t,r)+\zeta(t,-r)}2,
 \qquad
 \eta_o(t,r)=\frac{\eta(t,r)-\eta(t,-r)}2.
\]
The identities $(\zeta_e)_r(t,0)=0$ and $\eta_o(t,0)=0$ allow these tests
in the radial weak equations. By
\eqref{eq:reflection-mass}--\eqref{eq:reflection-momentum-energy},
\begin{equation}\label{eq:reflection-weak-pairings}
\begin{aligned}
 \int_{\mathbb R}(\zeta_t+v\zeta_x)\,\mathrm d\lambda_t
  &=\int_{[0,\infty)}((\zeta_e)_t+u(\zeta_e)_r)\,\mathrm d\mu_t,\\
 \int_{\mathbb R}(v\eta_t+v^2\eta_x)\,\mathrm d\lambda_t
  &=\int_{[0,\infty)}(u(\eta_o)_t+u^2(\eta_o)_r)\,\mathrm d\mu_t.
\end{aligned}
\end{equation}
The identities in \eqref{eq:reflection-weak-pairings} remain valid
at zero because $u(t,0)=v(t,0)=0$; in particular,
the mass atom retains its full weight. Applying the same parity
calculation to the initial terms proves
\eqref{eq:line-mass}--\eqref{eq:line-momentum}.

For continuous compactly supported line tests $\zeta$ and $\eta$,
let $\zeta_e$ and $\eta_o$ denote their even and odd restrictions
to $[0,\infty)$, respectively.
The time-dependent pairings satisfy
\begin{equation}\label{eq:reflection-test-pairings}
 \begin{gathered}
 \int_{\mathbb R}\zeta\,\mathrm d\lambda_t
   =\int_{[0,\infty)}\zeta_e\,\mathrm d\mu_t,\qquad
 \int_{\mathbb R}\eta\,\mathrm dp_t
   =\int_{[0,\infty)}\eta_o\,\mathrm dq_t,\\
 \int_{\mathbb R}\zeta\,\mathrm dk_t
   =\int_{[0,\infty)}\zeta_e\,\mathrm de_t.
 \end{gathered}
\end{equation}
The mass and momentum identities in \eqref{eq:reflection-test-pairings}
give the weakly continuous line representatives, since $\eta_o(0)=0$.
These correspondences are reversible: even
extension supplies every continuous radial mass test, and odd extension
supplies every continuous radial momentum test vanishing at zero.
The kinetic-measure identity in \eqref{eq:reflection-test-pairings}
gives the equivalence of initial kinetic-energy traces by even extension. It applies without change to an initial
origin atom.

Fix a positive time at which \eqref{eq:radial-osl} holds for every pair
of radii. Its positive half-line restriction is the required line OSL
bound. On the negative half-line, for $0\leq a<b$,
\[
 v(t,-a)-v(t,-b)=u(t,b)-u(t,a)\leq(b-a)/t.
\]
Taking $r_1=0$ in \eqref{eq:radial-osl} also gives
\begin{equation}\label{eq:boundary-osl}
 u(t,r)\leq r/t\qquad(r\geq0).
\end{equation}
Consequently, for $a,b\geq0$,
\[
 v(t,b)-v(t,-a)=u(t,b)+u(t,a)\leq(a+b)/t.
\]
This covers pairs on opposite sides of zero and proves the line OSL bound for
almost every positive time. The reflection preserves boundedness and
joint Borel measurability, so $(\lambda,v)$ belongs to the stated line
entropy class. The cross-origin estimate uses the radial OSL inequality with one endpoint at $r=0$.

Conversely, let $(\lambda,v)$ be a line entropy solution with even mass
$\lambda_t$ and odd momentum $p_t$. These symmetries determine the
velocity only up to a mass-null set. For a compactly supported
continuous test $\eta$,
\[
 \int_{\mathbb R}\eta(x)v(t,-x)\,\mathrm d\lambda_t(x)
 =\int_{\mathbb R}\eta(-x)\,\mathrm dp_t(x)
 =-\int_{\mathbb R}\eta(x)\,\mathrm dp_t(x).
\]
Thus $v(t,-x)=-v(t,x)$ for $\lambda_t$-almost every $x$, at every time.
The representative
\[
 \widetilde v(t,x)=\frac{v(t,x)-v(t,-x)}2
\]
is odd, bounded, and jointly Borel, and agrees with $v$ almost everywhere
with respect to $\lambda_t$. In particular it keeps the value zero at
any positive-mass origin atom and changes neither $p_t$ nor $k_t$.
At every time for which the original all-pairs OSL bound holds,
\[
 \widetilde v(t,y)-\widetilde v(t,x)
 =\tfrac12\bigl(v(t,y)-v(t,x)+v(t,-x)-v(t,-y)\bigr)
 \leq(y-x)/t\qquad(x<y).
\]
Hence this representative has the same entropy properties, including
the initial kinetic-energy trace.

Set $\mu_t=|\cdot|_\#\lambda_t$ and
$u(t,r)=\widetilde v(t,r)$ for $r>0$, with $u(t,0)=0$.
Evenness of $\lambda_t$ recovers \eqref{eq:reflection-mass}; oddness of
$\widetilde v$ recovers \eqref{eq:reflection-velocity}.
The even extension of a compatible radial mass test and the odd
extension of a radial momentum test are compactly supported $C^1$
line tests. Inserting them into \eqref{eq:line-mass}--\eqref{eq:line-momentum}
and using \eqref{eq:reflection-weak-pairings} for the spacetime terms
and \eqref{eq:reflection-test-pairings} at $t=0$ for the initial terms
gives the radial equations. The pairings in
\eqref{eq:reflection-test-pairings} give
the required time representatives and initial kinetic-energy trace. Finally,
the line OSL bound restricted to $0\leq x<y$, with
$\widetilde v(t,0)=0$, is \eqref{eq:radial-osl}. Thus $(\mu,u)$ is a
radial entropy solution. Reflection and restriction are inverse on mass
and momentum, and Proposition~\ref{prop:radial} supplies the full-space
correspondence.
\end{proof}

Permanent absorption is proved separately in
Proposition~\ref{prop:sticking}, using the mass balance and OSL at positive
times and the initial kinetic-energy trace at $t=0$.

\subsection[Proof of Theorem~\ref{thm:main}]{Proof of Theorem~\ref{thm:main}}\label{sec:preliminaries}

We apply the theorem of Huang and Wang
\cite[Definitions~1--2 and Theorems~1--2; p.~145, after (4.36)]{HW2001}, stated here
in its mass-primitive formulation.

\begin{theorem}[Huang--Wang existence and uniqueness]
\label{thm:hw-tool}
Let $\lambda_0\in\mathcal M^+_{\mathrm{loc}}(\mathbb R)$ and let
$v_0$ be a bounded $\lambda_0$-measurable function. There is a pair
$(m,v)$ with $m(t,\cdot)\in BV_{\mathrm{loc}}(\mathbb R)$,
$\lambda_t=\partial_xm(t,\cdot)\ge0$, and bounded mass-measurable
velocity $v$, such that the curves $\lambda_t$ and $p_t=v_t\lambda_t$
are weakly continuous on $[0,\infty)$ against compactly supported
continuous tests and have initial values $\lambda_0$ and
$v_0\lambda_0$. For compactly supported smooth tests on
$(0,\infty)\times\mathbb R$,
\[
 \int_0^\infty\!\int_{\mathbb R}m\zeta_t\,\mathrm dx\,\mathrm dt
 -\int_0^\infty\!\int_{\mathbb R}v\zeta\,\mathrm d\lambda_t\,\mathrm dt=0,
 \qquad
 \int_0^\infty\!\int_{\mathbb R}
 (v\eta_t+v^2\eta_x)\,\mathrm d\lambda_t\,\mathrm dt=0.
\]
The velocity has a representative for which there is a
Lebesgue-null set $N\subset(0,\infty)$, independent of $x$ and $y$, such that
for every $t\in(0,\infty)\setminus N$,
\[
 v(t,y)-v(t,x)\le\frac{y-x}{t}
 \quad\text{for all }x<y,
\]
and as $t\in(0,\infty)$ tends to $0$,
\[
 v_t^2\lambda_t\rightharpoonup v_0^2\lambda_0.
\]
The constructed solution can be chosen with
$|v|\le\|v_0\|_{L^\infty(\lambda_0)}$.
Any two pairs with these weak equations, time representatives, and
entropy conditions have the same mass and momentum for every
$t\ge0$, and their velocities agree almost everywhere with respect to
$\lambda_t$ for each such $t$. No finite-mass or moment assumption is required.
\end{theorem}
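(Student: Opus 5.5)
This statement restates the cited results of Huang and Wang, so I would not reprove the one-dimensional theory. Instead I would check that their definitions and conclusions match the formulation above, item by item.

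\emph{Existence.} Huang--Wang work with the mass primitive $m(t,x)$, normalized by $m(t,0-)$ or an equivalent convention, together with $\lambda_t=\partial_x m(t,\cdot)$. Their solution is built from the generalized variational principle: for each $(t,x)$ one minimizes a functional of the form $\int_{(x_0,y]}(\xi+tv_0(\xi)-x)\,\lambda_0(\dd\xi)$ over $y$. The minimizers determine $m(t,x)$, and $v(t,x)$ is the $\lambda_0$-average of $v_0$ over the cluster of minimizers.

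\begin{itemize}
\item The bound $|v|\le\|v_0\|_{L^\infty(\lambda_0)}$ follows because $v$ is an average of $v_0$.
\item The two weak identities on $(0,\infty)\times\R$ are their Definition~1. The mass identity in primitive form integrates by parts to \eqref{eq:line-mass} once the initial traces are added.
\item Weak continuity of $\lambda_t$ and $p_t$, with the stated initial values, is their Definition~2 together with the construction. I would verify it from the uniform local mass bound, which holds by finite propagation speed $\le\|v_0\|_\infty$, and from the monotone dependence of the minimizers on $t$.
\item The OSL bound with the factor $1/t$ comes from the ordering of the minimizers: $y(t,x)$ is nondecreasing in $x$, and positions move with speed $(x-y)/t$.
\item The initial energy trace $v_t^2\lambda_t\rightharpoonup v_0^2\lambda_0$ is their condition at $t=0$, referenced at p.~145 after (4.36).
\end{itemize}

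\emph{Uniqueness.} Their Theorem~2 compares two entropy solutions through their primitives and shows they coincide for almost every $t$. Given that, I would upgrade the agreement in three steps.
\begin{itemize}
\item Mass and momentum agree at every $t\ge0$: both curves are weakly continuous, and they agree on a dense set of times.
\item The velocities then agree $\lambda_t$-a.e. for each $t$, since two velocities with the same $\lambda_t$ and the same $p_t=v\lambda_t$ must coincide $\lambda_t$-a.e.
\item No finite-mass or moment assumption enters, because all estimates are local in $x$.
\end{itemize}

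\emph{Main obstacle.} The hardest step is reconciling representatives and exceptional sets. Huang--Wang state OSL for almost every $x$, or via one-sided limits, and possibly for almost every $t$ with a set depending on the pair $(x,y)$. Here we need one null set $N$ for all pairs $(x,y)$ and a single jointly Borel representative.

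My first attempt would be to define $v(t,x)$ everywhere from the right-continuous extreme minimizer $y_+(t,x)$. The OSL inequality then holds pointwise for every $t>0$, so $N$ can even be taken empty on the constructed solution. For uniqueness, the OSL bound is only needed on a full-measure set of times in their comparison argument, so the weaker hypothesis suffices. Joint Borel measurability would follow from the lower semicontinuity and monotone structure of the minimizer map.
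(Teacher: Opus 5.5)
\textbf{Agreement with the paper.} Your handling of the cited content agrees with the paper. The one argument the paper writes out for this statement is your upgrade: $t\mapsto\int\varphi\dd(\lambda^1_t-\lambda^2_t)$ and $t\mapsto\int\varphi\dd(p^1_t-p^2_t)$ are continuous and vanish for almost every $t>0$, hence for every $t\ge0$. Equal $\lambda_t$ and $p_t$ then force $\lambda_t$-a.e.\ equal velocities.

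\textbf{The gap: your representative.} The problem is in the step you single out as the main obstacle, the all-pairs OSL representative. As written, your construction does not give a valid representative.
\begin{itemize}
\item \emph{Atoms.} If, as your ordering argument suggests, you set $v=(x-y_+(t,x))/t$, the value is wrong on atoms of $\lambda_t$. There the velocity is fixed by $p_t(\{x\})/\lambda_t(\{x\})$. For example, unit masses at $0$ and $1$ with velocities $1$ and $-1$ merge at $x=\tfrac12$ and stay at rest, yet $(\tfrac12-1)/t\neq0$ for $t>\tfrac12$. The cluster value must be $(x-\bar y)/t$, with $\bar y$ the $\lambda_0$-barycentre of the whole label interval $[y_-(t,x),y_+(t,x)]$. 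This needs both extreme minimizers.
\item \emph{Vacuum.} In vacuum the minimizers fill an initial gap, so such formulas are unbounded as $t\downarrow0$. For $\lambda_0=\delta_0+\delta_1$ and $v_0=0$, one has $y_+(t,x)=1$ on $0<x<1$, while $L=0$. Your justification of $|v|\le L$ (``an average of $v_0$'') holds only $\lambda_t$-a.e. The class used later requires a bounded representative everywhere.
\end{itemize}

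\textbf{How to repair it, and how the paper proceeds.} Both defects can be fixed. Use barycentres on atoms; $x\mapsto\bar y(t,x)$ stays nondecreasing because minimizer sets are ordered. Then truncate to $\max\{-L,\min\{L,v\}\}$. This keeps $v-x/t$ nonincreasing, since it equals $\max\{-L-x/t,\min\{L-x/t,v-x/t\}\}$. The repaired construction gives OSL at every $t>0$ for the constructed solution, in the spirit of Suder--Tudorascu, but it relies on the internal variational structure.

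The paper instead proves the representation-free Lemma~\ref{lem:osl-representative}. Its input is any locally weakly continuous pair $(\lambda_t,p_t)$ with $|p_t|\le L\lambda_t$ and the $\lambda_t\otimes\lambda_t$-a.e.\ OSL. It sets $G_t(x)=\operatorname{ess\,sup}_{y\ge x}(v_t(y)-y/t)$ and $\widehat v_t=\max\{-L,\min\{L,x/t+G_t\}\}$. It then shows $\widehat v_t=v_t$ $\lambda_t$-a.e., with closed tails preserving atom velocities. Joint Borel measurability and a common null set of times follow from countable rational-level arguments. This costs an exceptional null set of times. In return it needs no Lagrangian structure or right continuity of $v_0$, and it is reused for the Natile--Savar\'e flow in Theorem~\ref{thm:projection}.

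\textbf{Primitive normalization.} Normalizing the primitive at the fixed point $x=0$, as your $m(t,0-)$ suggests, is incompatible with $\partial_tm=-v\lambda_t$ whenever mass crosses the origin. The admissible primitive carries the time-dependent constant fixed in Lemma~\ref{lem:normalized-primitive}. That normalization is what allows the uniqueness part to be applied to solutions given only in Euler form.
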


The original uniqueness conclusion is a space--time almost-everywhere statement.
For two solutions with the same initial data and any $\varphi\in C_c(\mathbb R)$,
the functions
\[
 t\longmapsto\int_{\mathbb R}\varphi\,\mathrm d(\lambda_t^1-\lambda_t^2),
 \qquad
 t\longmapsto\int_{\mathbb R}\varphi\,\mathrm d(p_t^1-p_t^2)
\]
are continuous and vanish for almost every $t>0$, hence vanish
for every $t\ge0$.

Lemma~\ref{lem:normalized-primitive} supplies the normalized
mass primitive. Lemma~\ref{lem:osl-representative} provides a jointly
Borel velocity representative that agrees with the original velocity
$\lambda_t$-almost everywhere at each time and satisfies the all-pairs OSL
bound outside a common null set of positive times.

The next lemma fixes the time-dependent additive constant
in the normalized mass primitive required by Theorem~\ref{thm:hw-tool}, so that
both the primitive equation and the prescribed initial trace hold,
without imposing a total-mass assumption.

\begin{lemma}[A normalized mass primitive]\label{lem:normalized-primitive}
Let $\lambda:[0,\infty)\to\Mloc^+(\R)$ be locally weakly continuous,
and let $v$ be jointly Borel with $|v|\leq L<\infty$.
If \eqref{eq:line-mass} holds, there is a jointly Borel function $m$
with
\[
 \begin{gathered}
 m\in L^\infty_{\mathrm{loc}}([0,\infty)\times\R)
       \cap C([0,\infty);L^1_{\mathrm{loc}}(\R)),\\
 m(t,\cdot)\in BV_{\mathrm{loc}}(\R)\quad(t\geq0),\qquad
 m\in BV_{\mathrm{loc}}((0,\infty)\times\R),\\
 \partial_xm(t,\cdot)=\lambda_t\quad(t\geq0),\qquad
 \partial_tm=-v\lambda_t\,\dd t.
 \end{gathered}
\]
The time derivative identity is understood as an identity of
space--time measures. The initial $L^1_{\mathrm{loc}}$ trace of $m$ is the signed cumulative
mass primitive $F_0(x)=\lambda_0([0,x))$ for $x>0$ and
$F_0(x)=-\lambda_0([x,0))$ for $x<0$, with $F_0(0)=0$.
Conversely, these derivative identities and initial trace imply
\eqref{eq:line-mass}.
\end{lemma}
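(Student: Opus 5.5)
We construct $m$ directly from the signed cumulative mass and then correct it by a time-dependent constant. For $t\geq0$ set
\[
 F_t(x)=\lambda_t([0,x))\ (x>0),\qquad F_t(x)=-\lambda_t([x,0))\ (x<0),\qquad F_t(0)=0,
\]
which is jointly Borel, left continuous in $x$, and satisfies $\partial_xF_t=\lambda_t$. Since $\partial_x$ of the target equals $\lambda_t$, any admissible $m$ must have the form $m(t,x)=F_t(x)+c(t)$. The task is therefore to choose $c$ so that $\partial_tm=-v\lambda_t\,\dd t$ holds and $c(0)=0$.

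\textbf{Determining $c$.} Fix a cutoff $\theta\in C_c^1(\R)$ with $\int\theta\,\dd x=1$. Define
\[
 c(t)=-\int\theta F_t\,\dd x-\int_0^t\!\!\int_{\R}\Theta'\,\dots
\]
To make this concrete, use the mass equation. For $g\in C^1_c(\R)$ with $\int g=0$, put $G(x)=-\int_{-\infty}^xg$; this $G$ is compactly supported, and $\int gF_t\,\dd x=\int G\,\dd\lambda_t$. Testing \eqref{eq:line-mass} with $\zeta=\alpha(t)G(x)$ shows that
\[
 t\mapsto\int G\,\dd\lambda_t \quad\text{is Lipschitz, with derivative}\quad \int vG'\,\dd\lambda_t=-\int vg\,\dd\lambda_t .
\]
This holds for every $t$ by weak continuity, not only almost everywhere. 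So the pairing of $F_t$ against mean-zero tests evolves correctly, and only the mean is undetermined.

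To fix the mean, set
\[
 c(t)=-\int\theta F_t\,\dd x-\int_0^t\!\!\int v\theta\,\dd\lambda_s\,\dd s+\int\theta F_0\,\dd x .
\]
Then $c(0)=0$. For general $g\in C_c^1$, decompose $g=(g-(\textstyle\int g)\theta)+(\int g)\theta$. The first piece is mean-zero and is handled above. For the second piece, $\int\theta m_t\,\dd x=\int\theta F_0\,\dd x-\int_0^t\!\int v\theta\,\dd\lambda_s\,\dd s$ by construction. Together these give
\[
 \frac{\dd}{\dd t}\int gm_t\,\dd x=-\int gv\,\dd\lambda_t ,
\]
which is the distributional identity $\partial_tm=-v\lambda_t\,\dd t$. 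The same Lipschitz estimate gives continuity of $c$.

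\textbf{Regularity.} Local boundedness follows from uniform local mass bounds on $[0,T]$, which hold by weak continuity and Banach--Steinhaus, together with the boundedness of $c$. For continuity into $L^1_{\mathrm{loc}}$, the weak continuity of $\lambda_t$ gives $F_s\to F_t$ at every $x$ with $\lambda_t(\{x\})=0$, hence almost everywhere. Uniform local bounds and dominated convergence then give convergence in $L^1_{\mathrm{loc}}$. Joint Borel measurability of $F$ comes from a monotone-class argument applied to $t\mapsto\lambda_t([0,x))$, using lower semicontinuity and open-set approximation. Space–time BV on $(0,\infty)\times\R$ follows because both distributional derivatives are locally finite measures: the derivative in $x$ is $\lambda_t\,\dd t$ and the derivative in $t$ is $-v\lambda_t\,\dd t$. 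The initial trace is $m_0=F_0$, since $c(0)=0$.

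\textbf{Converse.} Given the derivative identities and the initial trace, take $\zeta\in C_c^1([0,\infty)\times\R)$. Integrate $\int\!\!\int m\,\partial_x\zeta_t$ by parts in $t$, using $m\in C([0,\infty);L^1_{\mathrm{loc}})$ and the time derivative identity. Then integrate by parts in $x$ using $\partial_xm=\lambda_t$. This recovers \eqref{eq:line-mass}, with the initial term $\int\zeta(0,\cdot)\,\dd\lambda_0$ coming from $m_0=F_0$.

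The main obstacle is the pointwise-in-time identification: choosing $c$ so that both the primitive equation and the exact initial trace $F_0$ hold without any total-mass assumption. The mean-zero decomposition combined with the fixed cutoff $\theta$ is what handles this.
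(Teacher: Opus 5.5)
\textbf{The time-continuity step fails.} Your $m$ is the paper's $m$, with your $\theta$ in place of its $\chi$. Your mean-zero decomposition correctly proves $\partial_tm=-v\lambda_t\dd t$: for $\int g\dd x=0$ the pairing $\int gF_t\dd x=\int G\dd\lambda_t$ has a continuous, compactly supported kernel. This is the dual form of the paper's observation that $\partial_tG+v\lambda$ is constant in space. The gap is in the regularity paragraph. You claim that $F_s\to F_t$ at every $x$ with $\lambda_t(\{x\})=0$, and that $c$ is continuous by ``the same Lipschitz estimate''. Both claims fail when $\lambda_t(\{0\})>0$.

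Indeed, $F_t(x)=\int(\mathbf 1_{\{y<x\}}-\mathbf 1_{\{y<0\}})\,\lambda_t(\dd y)$, and this kernel jumps at $y=0$ as well as at $y=x$. The pointwise claim therefore needs $\lambda_t(\{0\})=0$ too. Similarly, $\int\theta F_t\dd x=\int\kappa\dd\lambda_t$ with $\kappa(y)=\int_y^\infty\theta(z)\dd z-\mathbf 1_{\{y<0\}}$, which has a unit jump at $y=0$. Your Lipschitz estimate covers only mean-zero tests, and $\theta$ is not one. Two examples:
\begin{itemize}
\item For $\lambda_t=\delta_{t-1}$ and $v\equiv1$, one gets $F_t\to F_1-1$ in $L^1_{\mathrm{loc}}$ as $t\uparrow1$, and $c(1)-c(1-)=-1$.
\item For $\lambda_t=\delta_{-t}$ (so $\lambda_0=\delta_0$) and $v\equiv-1$, one gets $c(0+)=1\neq c(0)=0$. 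So ``$m_0=F_0$ since $c(0)=0$'' does not yield the initial $L^1_{\mathrm{loc}}$ trace.
\end{itemize}
This is exactly the paper's situation: for a reflected shell reaching the origin, $F$ jumps by $\tfrac12$ at the arrival time.

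\textbf{How to repair it.} The jumps of $F$ and $c$ cancel in $m$, but your argument never uses that cancellation. As written, $m\in C([0,\infty);L^1_{\mathrm{loc}}(\R))$ and the initial trace are not established. There are two ways to fix this.
\begin{itemize}
\item Treat $F_t-\int\theta F_t\dd x$ as a single pairing, $\int\bigl(\mathbf 1_{\{y<x\}}-\int_y^\infty\theta(z)\dd z\bigr)\lambda_t(\dd y)$. Its kernel is compactly supported in $y$ and discontinuous only at $y=x$. This is the paper's Step~3, finished by dominated convergence.
\item Upgrade your own weak statement. The map $t\mapsto\int gm_t\dd x$ is continuous for every $g\in C_c^1(\R)$. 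For $t\le T$, the functions $m_t$ are bounded in $L^\infty(-R,R)$, with variation on $[-R,R]$ bounded by $\sup_{s\le T}\lambda_s([-R,R])<\infty$. By Helly's theorem, every sequence $m_{t_k}$ with $t_k\to t$ is precompact in $L^1_{\mathrm{loc}}$, and each of its limit points must equal $m_t$.
\end{itemize}
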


\begin{proof}
\emph{Step 1. A locally bounded cumulative mass primitive.}
Choose a nonnegative
$\chi\in C_c^\infty(\R)$ with $\int\chi\,\dd x=1$, and fix the signed
cumulative mass primitive
\[
 F(t,x)=
 \begin{cases}
  \lambda_t([0,x)),&x>0,\\
  0,&x=0,\\
  -\lambda_t([x,0)),&x<0.
 \end{cases}
\]
The half-open interval convention gives $\partial_xF=\lambda$,
including the atom at zero. Since the weakly continuous measure curve
is a Borel kernel and the intervals depend measurably on their endpoints,
$F$ is jointly Borel. Testing $\lambda_t$ against a compactly supported
continuous function equal to one on $[-R,R]$ gives, for every $R,T>0$,
\[
 K_{R,T}:=\sup_{0\le t\le T}\lambda_t([-R,R])<\infty,
 \qquad |F(t,x)|\le K_{R,T}\quad(|x|\le R).
\]
With $F_0=F(0,\cdot)$, define
\begin{equation}\label{eq:normalized-spatial-primitive}
 \begin{split}
 G(t,x)&=F(t,x)-\int_\R\chi(y)F(t,y)\,\dd y,\\
 m(t,x)&=G(t,x)+\int_\R\chi(y)F_0(y)\,\dd y
       -\int_0^t\int_\R\chi(y)v(s,y)\,\lambda_s(\dd y)\,\dd s.
 \end{split}
\end{equation}
If
$\operatorname{supp}\chi\subset[-R,R]$ and $L=\|v\|_\infty$, then
$|m(t,x)|\le(3+TL\|\chi\|_\infty)K_{R,T}$ for
$0\le t\le T$ and $|x|\le R$.

\emph{Step 2. Removal of the spatially constant defect.}
The spatial derivative of $G$ is $\lambda$, and the continuity equation
therefore yields, in distributions,
\[
 \partial_x(\partial_tG+v\lambda)
 =\partial_t\lambda+\partial_x(v\lambda)=0.
\]
Thus $\partial_tG+v\lambda$ is constant in space. Its pairing with
$\chi$ equals $\int\chi v\,\dd\lambda_t$, because
$\int\chi G=0$. The time integral in
\eqref{eq:normalized-spatial-primitive} removes exactly this constant:
\[
 \partial_xm=\lambda_t\,\dd t,
 \qquad \partial_tm=-v\lambda_t\,\dd t
\]
as space--time measures. By the construction in
\eqref{eq:normalized-spatial-primitive}, $\partial_xm(t,\cdot)=\lambda_t$ and
$m(t,\cdot)\in BV_{\mathrm{loc}}(\R)$ for every $t\ge0$.
The space--time derivative identities and the local bound on $m$
give $m\in BV_{\mathrm{loc}}((0,\infty)\times\R)$. On
$Q=(0,T)\times(-R,R)$ they give the variation estimate
\[
 |D_xm|(Q)+|D_tm|(Q)
 \le (1+L)\int_0^T\lambda_t([-R,R])\,\dd t.
\]

\emph{Step 3. Time continuity and the initial trace.}
The same normalization determines the time representative. Indeed,
\[
 G(t,x)=\int_\R
 \left(\mathbf1_{\{y<x\}}-\int_y^\infty\chi(z)\,\dd z\right)
 \lambda_t(\dd y).
\]
For fixed $x$, the kernel is compactly supported and its only possible
discontinuity is at $y=x$. Weak continuity of $\lambda$ implies
$G(t,x)\to G(s,x)$ as $t\to s$ whenever $\lambda_s(\{x\})=0$.
The exceptional set of $x$ is at most countable. Dominated convergence,
using $|F(t,x)|\leq K_{R,T}$ and
$G(t,x)=F(t,x)-\int\chi F(t,\cdot)$ on an interval $[-R,R]$
containing $\operatorname{supp}\chi$, gives continuity of $G$ in
$L^1_{\mathrm{loc}}(\R)$. The time integral in
\eqref{eq:normalized-spatial-primitive} is continuous and vanishes at
zero. Since $m(0,\cdot)=F_0$, we obtain
$m(t,\cdot)\to F_0$ in $L^1_{\mathrm{loc}}(\R)$ as $t\downarrow0$.
The relation $\partial_tm=-v\lambda_t\,\dd t$ and the initial
$L^1_{\mathrm{loc}}$ trace give, for tests meeting $t=0$,
\[
 \int_0^\infty\!\int_\R m\zeta_t\,\dd x\,\dd t
 -\int_0^\infty\!\int_\R v\zeta\,\dd\lambda_t\,\dd t
 +\int_\R F_0(x)\zeta(0,x)\,\dd x=0.
\]
This is the primitive equation in Theorem~\ref{thm:hw-tool}, with a
fixed initial primitive. Conversely, differentiation in $x$ recovers
\eqref{eq:line-mass}, including its initial term. The normalization
changes no mass, momentum, OSL bound or initial kinetic-energy trace.

\end{proof}

Suder and Tudorascu constructed a jointly Borel
representative satisfying the everywhere Oleinik condition for
sticky-particle flow (SPF) and scalar conservation law (SCL) solutions
with initial mass in $\mathcal P_2(\R)$ and bounded,
right-continuous initial velocity
\cite{ST2023}.
We require a representative-selection result for locally finite mass and
momentum curves, without assuming a Lagrangian representation or right
continuity of the initial velocity. We therefore give a direct
measure-theoretic proof. The construction preserves the velocity almost
everywhere with respect to the mass measure at every time and enforces
the spatial OSL bound on a common Borel set of full measure in
positive time.

\begin{lemma}[A Borel all-pairs OSL representative]\label{lem:osl-representative}
Let $\lambda_t\geq0$ and $p_t$ be locally weakly continuous Radon
measure curves on $[0,\infty)$, with $|p_t|\leq L\lambda_t$ for
all $t\geq0$. Suppose that, for almost every $t>0$, their
Radon--Nikodym velocity satisfies
\[
 v_t(y)-v_t(x)\leq(y-x)/t
 \quad\text{for }\lambda_t\otimes\lambda_t\text{-almost every }x<y.
\]
There is a jointly Borel representative $\widetilde v$, bounded by
$L$, such that $p_t=\widetilde v_t\lambda_t$ for every $t\geq0$
and the inequality holds for all $x<y$ at almost every positive time.
It preserves the velocity at every positive-mass atom and the kinetic
measure at each time, and hence preserves any prescribed initial traces.
\end{lemma}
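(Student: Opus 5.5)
The construction will work with the Radon--Nikodym velocity $v_t=\dd p_t/\dd\lambda_t$ and replace it, at each good time, by a canonical OSL envelope built only from its values on the support of $\lambda_t$.

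\emph{Step 1: a jointly Borel base density.} Take a jointly Borel version of $v_t$, bounded by $L$, such as
\[
 v_t(x)=\limsup_{h\downarrow0}\frac{p_t([x-h,x+h])}{\lambda_t([x-h,x+h])}\quad\text{(truncated to }[-L,L]),
\]
restricted to a countable sequence $h\downarrow0$. Since $t\mapsto\lambda_t,p_t$ are weakly continuous kernels, closed-interval masses are jointly Borel in $(t,x,h)$, so this limsup is jointly Borel. By Besicovitch differentiation it equals the density $\lambda_t$-a.e. for every $t$. At a positive-mass atom it equals $p_t(\{x\})/\lambda_t(\{x\})$ exactly, because the ratio converges to the atomic ratio.

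\emph{Step 2: the OSL envelope.} Write $w_t(x)=v_t(x)-x/t$. The hypothesis says that $w_t$ is nonincreasing $\lambda_t\otimes\lambda_t$-a.e. at good times. Define
\[
 \widetilde w_t(x)=\operatorname*{ess\,sup}_{\lambda_t}\{w_t(y):y\ge x\},
\]
with the convention that it equals the infimum over larger points where there is no mass. Concretely, use the right-continuous or left-limit monotone envelope, then clip so that $\widetilde v_t=\widetilde w_t+x/t$ stays in $[-L,L]$. The clipping is harmless: $\max(-L,\min(L,f))$ of a function with $f(y)-f(x)\le (y-x)/t$ keeps that property, because truncation is $1$-Lipschitz and monotone.

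A nonincreasing $\widetilde w_t$ gives the all-pairs bound. One checks that $\widetilde w_t=w_t$ $\lambda_t$-a.e. This uses the a.e. monotonicity: outside a $\lambda_t$-null set, $w_t$ agrees with a monotone function on $\operatorname{supp}\lambda_t$. At atoms the essential supremum must reproduce the atomic value; this holds because an atom has positive mass, so the a.e. pairwise inequality applies to it.

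The essential supremum over $[x,\infty)$ is computed via countably many rational thresholds,
\[
 \widetilde w_t(x)>q\iff\lambda_t\bigl(\{y\ge x:w_t(y)>q\}\cap[x,x+k]\bigr)>0\ \text{for some }k,
\]
which is jointly Borel in $(t,x)$ by Fubini-type measurability of kernels. At bad times, or at $t=0$, keep $v_t$ itself. The set of good times is Borel, since it can be expressed as the vanishing of the $\lambda_t\otimes\lambda_t$ measure of a jointly Borel set. So the glued function remains jointly Borel.

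\emph{Step 3: conclusions.} Since $\widetilde v_t=v_t$ $\lambda_t$-a.e. for every $t$, we get $p_t=\widetilde v_t\lambda_t$ and the same kinetic measure $\widetilde v_t^2\lambda_t$. Atom values are preserved, and hence so are all traces.

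\emph{Main obstacle.} The delicate step is Step 2. The envelope must coincide with $v_t$ at every atom and $\lambda_t$-a.e., while simultaneously being defined in vacuum gaps in a way that preserves the all-pairs inequality. The joint Borel measurability of the essential supremum construction is also delicate. I would first handle measurability via the rational-threshold description above, and verify atom preservation by splitting into the cases of right-isolated and right-accumulated atoms.
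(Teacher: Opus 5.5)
Your construction is the paper's. You build a jointly Borel density from interval ratios, then take the closed-tail essential supremum $G_t(x)=\operatorname{ess\,sup}_{y\geq x}w_t(y)$ of $w_t(y)=v_t(y)-y/t$, equal to $-\infty$ on a null tail. You then clip to $[-L,L]$, use rational thresholds for joint measurability, and glue on a Borel full-measure set of good times. Your Step~1 variant (limsup over closed intervals) and your truncation argument are fine. Two points need repair.

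First, do not regularize $G_t$. The closed-tail essential supremum is already defined at every $x$, vacuum included, so no extra convention is needed. It is precisely the inclusion of $y=x$ in the tail that protects atoms. Your suggested ``right-continuous or left-limit monotone envelope'' breaks this. Take $\lambda_t$ equal to Lebesgue measure on $[0,1)$ plus $\delta_1$, with $w_t=1$ on $[0,1)$ and $w_t(1)=0$. This is monotone, so the hypothesis holds (take $L$ large). Then $G_t(1)=0=w_t(1)$, but $G_t(1+)=-\infty$ and $G_t(1-)=1$, so either regularization changes the velocity of the atom at $1$.

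Second, the identity $G_t=w_t$ $\lambda_t$-a.e.\ is the actual content of the lemma. Your justification, that ``outside a null set $w_t$ agrees with a monotone function,'' merely restates it. The paper proves it in two directions:
\begin{itemize}
\item The inequality $G_t\leq w_t$ a.e.\ comes from the hypothesis via Fubini: for $\lambda_t$-a.e.\ $x$, $w_t(y)\leq w_t(x)$ for $\lambda_t$-a.e.\ $y>x$.
\item The reverse inequality uses no OSL hypothesis at all. For rational $c$, each $x\in E_c=\{w_t>c\geq G_t\}$ satisfies $\lambda_t(E_c\cap[x,\infty))=0$. This forces $\lambda_t(E_c)=0$: cover $E_c$ by tails $[x_n,\infty)$ with $x_n\in E_c$ and $x_n\downarrow\inf E_c$, or integrate $\lambda_t(E\cap[x,\infty))$ over a bounded piece $E$ as the paper does.
\end{itemize}
Once both inequalities are in hand, an atom cannot lie in the exceptional null set. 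Hence $G_t(x_0)=w_t(x_0)$ at every atom immediately, and your planned case split into right-isolated and right-accumulated atoms is unnecessary.
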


\begin{proof}
\emph{Step 1. A jointly Borel Radon--Nikodym velocity.}
For $j\geq1$, form the Borel functions
\[
 v_j(t,x)=
 \begin{cases}
 \displaystyle\frac{p_t((x-j^{-1},x+j^{-1}))}
                      {\lambda_t((x-j^{-1},x+j^{-1}))},
        &\lambda_t((x-j^{-1},x+j^{-1}))>0,\\[4pt]
 0,&\text{otherwise}.
 \end{cases}
\]
Borel measurability follows from integration of the interval indicators
against the weakly continuous measure kernels; $|v_j|\leq L$.
Define $v(t,x)$ as the limit of $v_j(t,x)$ when it exists, and as zero
otherwise. Differentiation of Radon measures on $\R$ gives
$v_t=\dd p_t/\dd\lambda_t$ $\lambda_t$-almost everywhere for every
$t$, so this bounded Borel representative preserves $p_t$ at every time.
At $t=0$ it may be replaced by any prescribed Borel representative
of the same initial velocity bounded by $L$.

\emph{Step 2. Spatial OSL extension and preservation of the velocity.}
For $t>0$, put
\[
 w_t(y)=v_t(y)-y/t,\qquad
 G_t(x)=\underset{y\geq x}{\operatorname{ess\,sup}}w_t(y),\qquad
 \widehat v_t(x)=\max\{-L,\min\{L,x/t+G_t(x)\}\},
\]
where the essential supremum is with respect to $\lambda_t$ and is
$-\infty$ on a null tail. The function $G_t$ is nonincreasing, with
$G_t(x)\leq L-x/t$. Since
\[
 \widehat v_t(x)-x/t
 =\max\{-L-x/t,\min\{L-x/t,G_t(x)\}\}
\]
is nonincreasing, $\widehat v$ satisfies the all-pairs bound and
$|\widehat v|\leq L$.

At a time when the assumed almost-everywhere bound holds, Fubini gives
$G_t(x)\leq w_t(x)$ for $\lambda_t$-almost every $x$; the diagonal
adds no restriction. For the reverse inequality, let
$E_c=\{x:w_t(x)>c\geq G_t(x)\}$, with $c$ rational.
Every $x\in E_c$ has $\lambda_t(E_c\cap[x,\infty))=0$.
If $\lambda_t(E_c)>0$, choose $N$ so that
$E=E_c\cap[-N,N]$ has positive finite mass. Integration over $E$
contradicts
\[
 (\lambda_t\otimes\lambda_t)
       \{(x,y)\in E^2:y\geq x\}\geq\tfrac12\lambda_t(E)^2>0.
\]
The countable union over $c$ proves $w_t=G_t$ almost everywhere,
and hence $\widehat v_t=v_t$ almost everywhere. In particular, every
atom keeps its velocity; the use of the closed tail includes its mass.

\emph{Step 3. Measurability and the common set of positive times.}
For each rational $c$,
\[
 \{(t,x):G_t(x)>c\}
 =\left\{(t,x):\int_\R
   \mathbf1_{\{y\geq x,\ w_t(y)>c\}}\,\lambda_t(\dd y)>0\right\}
\]
is Borel. The set of times at which the assumed bound holds is also
Borel: for each integer $N$, integrate the indicator of its violation
over $[-N,N]^2$ against $\lambda_t\otimes\lambda_t$ and require the
result to vanish. On this common full-measure time set choose
$\widetilde v_t=\widehat v_t$; elsewhere retain $v_t$, including at
zero. The representative is jointly Borel and agrees with the
Radon--Nikodym velocity at every time. Momentum, kinetic measures and
all their prescribed traces are therefore unchanged.
\end{proof}

We now apply the Eulerian theorem to the reflected data
and use uniqueness to recover the required symmetry.

\begin{proof}[Existence and uniqueness in Theorem~\ref{thm:main}]
\emph{Step 1. Existence from the reflected initial data.}
The full-space initial data give $\mu_0=|\cdot|_\#\rho_0$ and the scalar
radial velocity $u_0$, satisfying \eqref{eq:radial-data}.
Apply Theorem~\ref{thm:hw-tool} to the reflected initial data in
\eqref{eq:reflection-mass}--\eqref{eq:reflection-velocity}.
Differentiating its primitive identity in $x$ gives the mass equation
on positive times. The weakly continuous mass and momentum traces
supply the initial terms in \eqref{eq:line-mass}--\eqref{eq:line-momentum}.
Lemma~\ref{lem:osl-representative} gives a bounded jointly Borel
all-pairs OSL velocity $w$ without changing any mass, momentum or
kinetic measure. Thus $(\lambda,w)$ belongs to the line entropy
class in Section~\ref{subsec:reflection-equivalence}.

\emph{Step 2. Symmetry and radial reconstruction.}
Reflection acts on a whole-line solution by
\[
 (\lambda_t,v(t,x))\longmapsto
 ((x\mapsto-x)_\#\lambda_t,-v(t,-x)),
\]
and preserves the reflected initial data.
Lemma~\ref{lem:normalized-primitive} places the reflected Euler pair
in the same primitive class. The uniqueness in Theorem~\ref{thm:hw-tool}
therefore makes $\lambda_t$ even and $p_t=w_t\lambda_t$ odd at every
time. For the all-pairs OSL representative $w$, set
\[
 v(t,x)=\tfrac12\bigl(w(t,x)-w(t,-x)\bigr).
\]
The symmetry of $\lambda_t$ and $p_t$ gives $v=w$
$\lambda_t$-almost everywhere at each time. The odd representative
$v$ is jointly Borel, has the same bound, satisfies $v(t,0)=0$,
and retains the OSL bound, as verified in
Theorem~\ref{thm:reflection}. In particular, its mass, momentum and
kinetic measures have the original initial traces.
Theorem~\ref{thm:reflection} gives the inverse reflection, and
Proposition~\ref{prop:radial} reconstructs the required full-space
radial entropy solution $(\rho,\mathbf u)$ of \eqref{eq:euler}.

\emph{Step 3. Uniqueness at every time.}
For two full-space radial entropy solutions in the sense of
Definition~\ref{def:full-entropy} with the same initial data,
Proposition~\ref{prop:radial} and Theorem~\ref{thm:reflection}
give symmetric whole-line entropy solutions with the same reflected
initial data. Lemma~\ref{lem:normalized-primitive} supplies these
solutions with normalized mass primitives having the same initial trace.
Theorem~\ref{thm:hw-tool} then identifies the reflected mass and momentum. Equality holds
at every time by their weakly continuous representatives.
The inverse reflection determines $\mu_t$ and $q_t=u_t\mu_t$ on
$(0,\infty)$ and the absorbed mass; both radial scalar momentum measures are zero on
$\{0\}$. Thus the two radial mass--momentum pairs agree on the closed
half-line, and their Radon--Nikodym velocities agree
$\mu_t$-almost everywhere. By Proposition~\ref{prop:radial}, their
full-space masses $\rho_t$ and vector momentum measures $\mathbf u(t,\cdot)\rho_t$
agree at every time, and their velocities agree $\rho_t$-almost
everywhere. Permanent absorption is proved in Proposition~\ref{prop:sticking}.
\end{proof}

\section[Absorption at the origin: structure and stability]{Absorption at the origin: structure and stability}\label{sec:absorption}

We first prove permanent absorption for locally finite mass.
Under the additional finite-mass and second-moment assumptions,
Subsections~\ref{subsec:radial-projection}--\ref{subsec:boundary-momentum}
establish Theorem~\ref{thm:main-absorption},
Subsection~\ref{subsec:absorption-stability} proves
Proposition~\ref{thm:main-stability}, and
Subsection~\ref{subsec:absorption-examples} illustrates
the different types of absorption.

\subsection[Permanent absorption at the origin]{Permanent absorption at the origin}

To prove absorption, symmetry must be supplemented by the entropy
conditions: OSL controls positive times, while the initial kinetic-energy trace controls
the initial atom.

\begin{proposition}[Absorption at the origin]\label{prop:sticking}
Let $(\mu,u)$ be the radial representation of the solution in
Theorem~\ref{thm:main}. Then
\[
 \mu_t(\{0\})\ge\mu_s(\{0\})\qquad(0\le s\le t).
\]
Mass reaching the origin at a positive time remains there, as does
the initial atom $\mu_0(\{0\})\delta_0$.
\end{proposition}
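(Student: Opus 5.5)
The plan is to treat positive times and the initial atom separately, as the remark before the statement suggests. In both cases I would test the radial mass equation \eqref{eq:radial-mass-weak} with time-dependent cutoffs concentrated near the origin. This is allowed for every compactly supported $C^1$ function, by the extension of mass tests noted after Definition~\ref{def:entropy}. Combined with weak continuity of $\mu_t$, the weak equation gives, for every $0\leq s<t$ and suitable $\phi$,
\[
 \int\phi(t,\cdot)\,\dd\mu_t-\int\phi(s,\cdot)\,\dd\mu_s
 =\int_s^t\!\int(\phi_\tau+u\phi_r)\,\dd\mu_\tau\,\dd\tau .
\]
To obtain this identity at every pair of times rather than almost every, I would approximate $\mathbf 1_{[s,t]}$ by smooth time weights and use continuity of $\tau\mapsto\int\phi(\tau,\cdot)\,\dd\mu_\tau$.

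\emph{Positive times.} Fix a nonincreasing $\chi\in C^1([0,\infty))$ with $\chi=1$ on $[0,1]$ and $\chi=0$ on $[2,\infty)$, and set $\phi(\tau,r)=\chi(r/(c\tau))$ for $c>0$. Then $\phi_\tau+u\phi_r=\chi'(r/(c\tau))\,(c\tau)^{-1}\bigl(u-r/\tau\bigr)$. For a.e.\ $\tau$ the OSL bound with one endpoint at zero gives \eqref{eq:boundary-osl}, namely $u(\tau,r)\leq r/\tau$; since $\chi'\leq0$, the integrand is $\geq0$. Hence $\tau\mapsto\int\chi(r/(c\tau))\,\dd\mu_\tau$ is nondecreasing on $(0,\infty)$. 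For $0<s<t$ this gives
\[
 \mu_t([0,2ct])\ \geq\ \int\chi\Bigl(\frac r{ct}\Bigr)\dd\mu_t\ \geq\ \int\chi\Bigl(\frac r{cs}\Bigr)\dd\mu_s\ \geq\ \mu_s(\{0\}).
\]
Letting $c\downarrow0$ yields $\mu_t(\{0\})\geq\mu_s(\{0\})$. As a by-product, for every fixed $c>0$,
\[
 \mu_t(\{0\})\geq\lim_{s\downarrow0}\int\chi\bigl(r/(cs)\bigr)\,\dd\mu_s ,
\]
where the limit exists by monotonicity.

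\emph{The initial atom.} It remains to show that $\liminf_{s\downarrow0}\mu_s([0,cs])\geq m_0$ for each fixed $c>0$, which is where the initial kinetic-energy trace \eqref{eq:initial-energy-trace} enters. Fix small $\delta>0$. By weak continuity, $\mu_s([0,\delta])\geq m_0-o(1)$ as $s\downarrow0$. By $e_0(\{0\})=0$ and the trace, $e_s([0,2\delta])\leq\varepsilon(\delta)+o(1)$ with $\varepsilon(\delta)\to0$. I would then test with a $C^1$ function $g$ equal to $r$ on $[0,\delta]$, bounded by $2\delta$ and constant beyond $2\delta$. For $J(s)=\int g\,\dd\mu_s$, Cauchy--Schwarz gives
\[
 J(s)\leq J(0)+\int_0^s\bigl(\mu_\tau([0,2\delta])\,e_\tau([0,2\delta])\bigr)^{1/2}\dd\tau\leq J(0)+s\,\eta(\delta)+o(s),\qquad \eta(\delta)\to0 .
\]
I would split $J(0)=\int_{(0,\delta']}r\,\dd\mu_0+\dots$ by separating initial mass at tiny positive radii, whose total $\mu_0((0,\delta'])$ can be made small. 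Markov's inequality then bounds the mass of $\mu_s$ in $[cs,\delta]$ by $J(s)/(cs)$, up to that small set. This forces $\mu_s([0,cs])\geq m_0-o(1)$.

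\emph{Main obstacle.} This last step is the hard one. The term $J(0)/(cs)$ does not vanish as $s\to0$ unless the initial mass at small positive radii is handled separately. What I would try first is a finer Lagrangian-free bookkeeping: compare $\int\chi(r/(cs))\,\dd\mu_s$ with $\int\chi(r/\delta)\,\dd\mu_s$. The difference is the mass in the annulus $cs<r<2\delta$. I would split it into a part bounded by $\mu_0((0,2\delta])+o(1)$, via the outgoing/incoming flux of a test $\chi(r/\delta)-\chi(r/\delta')$ with velocity bound $L$, plus a part controlled by the energy-weighted first moment above. If that split fails, the fallback is to invoke the reflected whole-line solution from Theorem~\ref{thm:reflection} and the Huang--Wang structure of Theorem~\ref{thm:hw-tool} to control the mass primitive at $x=0^\pm$ near $t=0$.
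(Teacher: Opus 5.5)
Your positive-time argument is correct and coincides with the paper's Step~1. One precision: monotonicity gives $\int\chi(r/(ct))\,\dd\mu_t\ge\lim_{s\downarrow0}\int\chi(r/(cs))\,\dd\mu_s$. It does not give this bound for $\mu_t(\{0\})$, because the right side depends on $c$. You may send $c\downarrow0$ only after bounding that limit below by $m_0$ for each fixed $c$.

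The genuine gap is the initial atom, which you leave open. The first-moment/Markov route cannot close it, for the following reasons.
\begin{itemize}
\item It bounds the whole annulus mass $\mu_s([cs,\delta])$ by $J(s)/(cs)$. For fixed $\delta$, this annulus mass does not vanish as $s\downarrow0$ when $\mu_0$ charges $(0,\delta]$. Correspondingly, $J(0)/(cs)\ge(cs)^{-1}\int_{(0,\delta]}r\,\dd\mu_0\to\infty$.
\item An Eulerian moment of $\mu_s$ cannot separate mass that started at the origin from mass that started at small positive radii. So the split \emph{up to that small set} is not available.
\item Letting $\delta$ shrink with $s$ would require $\mu_s([0,\delta(s)])\ge m_0-o(1)$, which is exactly what you want to prove.
\end{itemize}
Neither the annulus bookkeeping nor the Huang--Wang fallback is carried out.

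The missing idea is that no upper bound on the annulus is needed. Test the mass balance on $[0,s]$ with the \emph{time-independent} cutoff at the final scale, $\phi(r)=\chi(r/(cs))$:
\[
 \int\chi\Bigl(\frac r{cs}\Bigr)\dd\mu_s
 =\int\chi\Bigl(\frac r{cs}\Bigr)\dd\mu_0
  +\int_0^s\!\int u(\tau,r)\,\frac{\chi'(r/(cs))}{cs}\,\mu_\tau(\dd r)\,\dd\tau .
\]
\begin{itemize}
\item Since $\chi\ge0$ and $\chi(0)=1$, the initial term is at least $m_0$. Mass at small positive radii only helps and never has to be separated.
\item The flux term is supported in $r\le2cs$. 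For $2cs\le1$, Cauchy--Schwarz bounds it by $\frac{\|\chi'\|_\infty}{c}\sup_{0<\tau<s}\bigl(\mu_\tau([0,1])\,e_\tau([0,2cs])\bigr)^{1/2}$. The length $s$ of the time interval cancels the factor $1/(cs)$ from the gradient.
\item This is the same scaling your own flux term had, $s\eta(\delta)/(cs)=\eta(\delta)/c$. The divergence came only from dividing $J(0)$ by $cs$.
\item Your estimate $e_\tau([0,2\delta])\le\varepsilon(\delta)+o(1)$, with $2cs<2\delta$, sends the flux term to zero as $s\downarrow0$ and then $\delta\downarrow0$.
\end{itemize}
Hence $\liminf_{s\downarrow0}\int\chi(r/(cs))\,\dd\mu_s\ge m_0$. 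Your monotonicity then gives $\int\chi(r/(ct))\,\dd\mu_t\ge m_0$ for every $t>0$ and every fixed $c>0$. Only at this point does $c\downarrow0$ yield $\mu_t(\{0\})\ge m_0$. This is the paper's Step~2.
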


\begin{proof}
\emph{Step 1. Monotonicity at positive times.}
Choose $h\in C_c^1([0,\infty))$ with $0\le h\le1$, $h(0)=1$,
$h'\le0$, and $\operatorname{supp}h\subset[0,1]$, taking $h$ constant
near zero. For $c>0$ set
\[
 J_c(t)=\int_{[0,\infty)}h\!\left(\frac r{ct}\right)\mu_t(\dd r),
 \qquad t>0.
\]
On a compact time interval in $(0,\infty)$, the test
$h(r/(ct))$ is compatible at zero and has uniformly compact spatial
support. The weak mass equation therefore makes $J_c$ absolutely
continuous, with
\begin{equation}\label{eq:cone-mass-monotonicity}
 J_c'(t)=\int_{[0,\infty)}
 \frac{h'(r/(ct))}{ct}\left(u(t,r)-\frac rt\right)\mu_t(\dd r)
 \ge0
 \quad\text{for almost every }t>0.
\end{equation}
The sign follows from $h'\le0$ and $u(t,r)\le r/t$.
For $0<s<t$ this gives $J_c(t)\ge J_c(s)$. Dominated convergence
as $c\downarrow0$ yields
\[
 \mu_t(\{0\})\ge\mu_s(\{0\}).
\]

\emph{Step 2. Persistence of the initial origin atom.}
To compare with the initial atom, keep $c>0$ fixed and use the
time-independent test $h(r/(cs))$ in the mass balance on $[0,s]$.
The initial integral is at least $M_0:=\mu_0(\{0\})$, while
Cauchy--Schwarz in space and time gives
\begin{align}
 J_c(s)&\ge M_0-\frac{\|h'\|_\infty}{cs}
       \int_0^s\!\int_{[0,cs]}|u(\tau,r)|\,\mu_\tau(\dd r)\,\dd\tau
       \notag\\
 &\ge M_0-\frac{\|h'\|_\infty}{c}
       \left(C\sup_{0<\tau<s}e_\tau([0,cs])\right)^{1/2}.
       \label{eq:initial-cone-mass}
\end{align}
Here $C$ is independent of sufficiently small $s$: weak continuity
bounds $\mu_\tau([0,1])$ uniformly, and $cs\le1$.

The shrinking energy term is controlled by the initial kinetic-energy trace.
For $\delta>0$, choose a continuous cutoff
$0\le\chi_\delta\le1$, equal to one on $[0,\delta]$ and supported in
$[0,2\delta]$. For $cs<\delta$ we have
$e_\tau([0,cs])\le\int\chi_\delta\,\dd e_\tau$. Thus the initial trace gives
\[
 \limsup_{s\downarrow0}\sup_{0<\tau<s}e_\tau([0,cs])
 \le\int\chi_\delta\,\dd e_0.
\]
The right-hand side tends to zero as $\delta\downarrow0$, since
$e_0(\{0\})=u_0(0)^2\mu_0(\{0\})=0$. Substitution into
\eqref{eq:initial-cone-mass}, followed by
\eqref{eq:cone-mass-monotonicity}, gives
\[
 J_c(t)\ge\liminf_{s\downarrow0}J_c(s)\ge M_0
 \qquad(t>0).
\]
Only now do we send $c\downarrow0$, obtaining
$\mu_t(\{0\})\ge\mu_0(\{0\})$. Together with the positive-time
comparison, this proves monotonicity on $[0,\infty)$.

At the level of measures, for every $s\ge0$ the nonnegative curve
$\mu_t-\mu_s(\{0\})\delta_0$, $t\ge s$, satisfies the mass balance
with initial measure $\mu_s-\mu_s(\{0\})\delta_0$. Indeed, the
subtracted atom is stationary and has zero velocity. This gives
persistence for mass measures, including the initial atom.
\end{proof}

\begin{samepage}
\emph{A corresponding trajectory estimate.}
If a nonnegative absolutely continuous curve $R$ satisfies
$R'(\tau)=u(\tau,R(\tau))$ for almost every $\tau\in[s,T]$, where
$s>0$, the OSL bound gives
$(R(\tau)/\tau)'=(\tau R'(\tau)-R(\tau))/\tau^2\le0$ almost everywhere.
Consequently,
\begin{equation}\label{eq:radial-path-bound}
 0\le R(t)\le \frac ts R(s)\qquad\text{$(s\le t\le T)$}.
\end{equation}
This proves that $R(s)=0$ implies $R(t)=0$ for every $t\in[s,T]$;
the argument has no initial-time implication because it divides by
$s$.
\end{samepage}

\begin{remark}\label{rem:initial-energy}
The initial kinetic-energy trace cannot be omitted even in the symmetric
class. For $\alpha,c>0$, the radial data $\mu_0=\alpha\delta_0$, $u_0=0$
admit the expanding weak solution $\mu_t=\alpha\delta_{ct}$,
$u(t,ct)=c$. A representative
$u(t,r)=\min\{r/t,c\}$ satisfies the radial OSL bound.
Mass and the compatible momentum have the required initial traces,
but $u_t^2\mu_t\rightharpoonup\alpha c^2\delta_0$.
It is therefore excluded by the initial kinetic-energy trace. In the reflected system
this is the symmetric splitting into two oppositely moving atoms.
\end{remark}

\subsection[Lagrangian projection representation]{Lagrangian projection representation}\label{subsec:radial-projection}

From this subsection onward, the initial radial measure has finite mass
$m=\mu_0([0,\infty))>0$ and finite second moment. We retain the bounded
initial scalar radial velocity assumed in Theorem~\ref{thm:main}.
These additional assumptions permit an $L^2$ projection representation
of the radial solution. The zero level set of the position map determines the absorbed mass;
the temporal behavior of that set will also be used to express the
boundary impulse and prove stability of absorption.

The position projection identifies the labels at the origin,
while the velocity projection records their stopping and the motion of
positive-radius mass clusters. We derive the radial representation below from
the one-dimensional sticky-particle theory of Natile and Savar\'e
by odd reflection in mass coordinates.
Theorem~\ref{thm:hw-tool} will identify the resulting flow with the
Eulerian entropy solution.

We use the initial mass as a label and set
\[
 R_0(a)=\inf\{r\geq0:\mu_0([0,r])>a\},\qquad
 U_0(a)=u_0(R_0(a)),\qquad 0<a<m.
\]
Thus $(R_0)_{\#}\dd a=\mu_0$, $R_0\in L^2(0,m)$, and $U_0$ is constant
almost everywhere on every level interval of $R_0$. In particular,
$U_0=0$ on $\{R_0=0\}$. Set
\[
 \Kplus=\{R\in L^2(0,m):R\geq0\text{ and }R
                  \text{ has a nondecreasing representative}\}.
\]
This cone is convex and closed: an $L^2$-convergent sequence has a
subsequence converging almost everywhere, and its nonnegativity and
monotonicity pass to a representative of the limit.
The velocity must be constant on each positive-radius mass cluster and vanish on
the absorbed labels. To express these constraints by orthogonal
velocity projection, for $R\in\Kplus$, define the closed subspace
\[
 H_R^0:=\left\{W\in L^2(0,m):
 \begin{array}{l}
 W\text{ is }\sigma(R)\text{-measurable and}\\
 W=0\text{ a.e. on }\{R=0\}
 \end{array}\right\}.
\]
Equivalently, elements of $H_R^0$ are constant almost everywhere on
each positive-mass level interval of $R$, and vanish on the zero
level set.

\begin{theorem}[Radial projection representation]\label{thm:projection}
Let $(\mu,u)$ be the radial representation of the solution in
Theorem~\ref{thm:main}, under the finite-mass and second-moment
assumptions of this subsection. For every
$t\geq0$, its Lagrangian position and velocity have the following
representations.
\begin{enumerate}
\item[\textup{(i)}] The position map is
\begin{equation}\label{eq:radial-projection}
 R_t=P_{\Kplus}(R_0+tU_0),\qquad
 \mu_t=(R_t)_{\#}\dd a.
\end{equation}
\item[\textup{(ii)}] The right derivative of $R_t$ exists in $L^2(0,m)$
and equals
\begin{equation}\label{eq:radial-actual-velocity}
 V_t=\frac{\dd^+}{\dd t}R_t=P_{H_{R_t}^0}U_0,
 \qquad V_t=u(t,R_t)\quad\text{almost everywhere}.
\end{equation}
In particular, $V_t=0$ almost everywhere on $\{R_t=0\}$.
\end{enumerate}
If $r>0$ and $I=(a,b)$ is the entire level interval $\{R_t=r\}$,
up to null sets, with $b>a$, then
\begin{equation}\label{eq:full-shell-barycentres}
 r=\frac1{b-a}\int_a^b(R_0+tU_0)\,\dd c,
 \qquad
 u(t,r)=\frac1{b-a}\int_a^b U_0\,\dd c.
\end{equation}
\end{theorem}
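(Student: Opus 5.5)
We plan to derive the radial statement from the whole-line Natile--Savar\'e projection representation by odd reflection in mass coordinates, and then identify the result with the Eulerian entropy solution through uniqueness in Theorem~\ref{thm:hw-tool}.

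\emph{Reflected data and the whole-line flow.} Let $\lambda_0$ be the reflection \eqref{eq:reflection-mass} of $\mu_0$, of total mass $m$. Use the symmetric label $b\in(-m/2,m/2)$. Its whole-line quantile $X_0$ is odd, and up to a.e.\ equality
\[
X_0(b)=R_0(2b-m_0)\quad\text{for } b>m_0/2,\qquad X_0=0\ \text{on }(-m_0/2,m_0/2).
\]
The Lagrangian initial velocity $\bar V_0=v_0\circ X_0$ is then odd, and equals $U_0(2b-m_0)$ for $b>m_0/2$.

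Natile--Savar\'e give $X_t=P_{\mathcal K}(X_0+t\bar V_0)$, where $\mathcal K$ is the cone of nondecreasing maps in $L^2(-m/2,m/2)$. They also give the right derivative $P_{H_{X_t}}\bar V_0$, with $H_X$ the $\sigma(X)$-measurable functions, and the velocity $v(t,X_t)=$ this derivative. Finally, $(X_t)_\#\mathrm db$ solves the sticky flow.

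We must check that this flow lies in the line entropy class of Section~\ref{subsec:reflection-equivalence}. The weak equations follow from the projection formula. The OSL bound follows from the Oleinik-type property of the projected velocity, after applying Lemma~\ref{lem:osl-representative} to get an all-pairs representative. The initial kinetic-energy trace follows from the $L^2$ continuity of $X_t$ and of its right derivative at $t=0$. Uniqueness in Theorem~\ref{thm:hw-tool}, together with Theorem~\ref{thm:reflection}, then identifies $\lambda_t$ with the reflection of the $\mu_t$ in Theorem~\ref{thm:main}.

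\emph{Reduction of the projection to $\Kplus$.} The reflection $b\mapsto -b$ composed with negation is a linear isometry that preserves $\mathcal K$. Since $X_0+t\bar V_0$ is odd, uniqueness of the metric projection makes $X_t$ odd.

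An odd nondecreasing map is determined by its restriction to $(0,m/2)$, which is nonnegative. Conversely, every nonnegative nondecreasing map on the half-interval extends oddly into $\mathcal K$. The $L^2$ distance to an odd element is twice the half-interval distance. Hence $X_t|_{(0,m/2)}$ is the projection onto the nonnegative nondecreasing cone on the half-interval.

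Rescaling $a=2b$ gives $R_t=P_{\Kplus}(R_0+tU_0)$. Here the block $[0,m_0)$ of the label space carries $R_0=0$ and $U_0=0$, which is consistent with the definition of $R_0$ by the infimum. The push-forward identity $\mu_t=(R_t)_\#\mathrm da$ follows from $\mu_t=|\cdot|_\#\lambda_t$.

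\emph{Velocity projection.} By the same oddness argument, $\sigma(X_t)$-measurable odd functions restrict to functions that are $\sigma(R_t)$-measurable on the positive half and vanish on the zero level set. The zero level set $\{X_t=0\}$ is a symmetric interval, and on it an odd function that is constant must vanish. Therefore the odd part of $P_{H_{X_t}}\bar V_0$ restricts exactly to $P_{H^0_{R_t}}U_0$, which gives \eqref{eq:radial-actual-velocity}.

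\emph{Shell barycentres.} The formulas \eqref{eq:full-shell-barycentres} follow from the variational characterisation of the projection. On a full level interval $(a,b)$ with $r>0$, we may perturb $R_t$ by $\pm\varepsilon\mathbf 1_{(a,b)}$ and stay in $\Kplus$, since $r>0$ and the level interval is maximal. The first-order condition gives $\int_a^b (R_0+tU_0-r)=0$. The velocity formula is the conditional expectation given by the orthogonal projection onto $H^0_{R_t}$.

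The main obstacle is the second step: verifying that the Natile--Savar\'e flow satisfies exactly the entropy class used for Theorem~\ref{thm:hw-tool}. This requires an everywhere OSL representative and the initial kinetic-energy trace, the latter including the case of an origin atom, where the symmetric block must not split. For the trace, I would first try the estimate $\|X_t-X_0\|\le tL$ together with right-continuity of $P_{H_{X_t}}\bar V_0\to\bar V_0$ as $t\downarrow0$.
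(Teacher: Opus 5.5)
Your overall route is the paper's: odd reflection in mass coordinates, the Natile--Savar\'e formulas on the symmetric label interval, identification with the Eulerian solution through Lemma~\ref{lem:osl-representative}, Lemma~\ref{lem:normalized-primitive} and uniqueness in Theorem~\ref{thm:hw-tool}, then oddness of the projection and restriction to positive labels. The step that fails as written is the position identity in \eqref{eq:full-shell-barycentres}.

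The competitors $R_t\pm\varepsilon\mathbf 1_{(a,b)}$ need not belong to $\Kplus$. Maximality of the level interval only gives $R_t<r$ on $(0,a)$ and $R_t>r$ on $(b,m)$. It does not give a jump of $R_t$ at $a$ or at $b$. If $R_t(b+)=r$, then $R_t+\varepsilon\mathbf 1_{(a,b)}$ is not nondecreasing for any $\varepsilon>0$. If $R_t(a-)=r$, the same holds for $R_t-\varepsilon\mathbf 1_{(a,b)}$.

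This happens for actual data. Take $m=3$, $R_0(c)=c$, and $U_0$ equal to $0$, $\tfrac52-2c$, $-1$ on $(0,1)$, $(1,2)$, $(2,3)$. At $t=1$ the projection is $R_1(c)=c,\,1,\,c-1$ on these intervals, which is continuous at both ends of $\{R_1=1\}=(1,2)$. So the two-sided first-order condition you invoke is not available.

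The repair, as in the paper, is to perturb through order-preserving functions of $R$. Put $R=R_t$ and $F=R_0+tU_0$. Let $h$ be Lipschitz with bounded support, $h(0)=0$ and $\varepsilon\operatorname{Lip}(h)<1$. Then $s\mapsto s\pm\varepsilon h(s)$ is nondecreasing and fixes $0$, so $R\pm\varepsilon h(R)\in\Kplus$. The projection inequality then gives $\int_0^m(F-R)h(R)\,\dd a=0$. Letting $h$ tend boundedly and pointwise to $\mathbf 1_{\{r\}}$, through functions supported away from $0$, yields $\int_{\{R=r\}}(F-r)\,\dd a=0$. A layer-cake argument also works, using $\int_{\{R>s\}}(F-R)\,\dd a\le0$ for all $s$ together with $\int_0^m(F-R)R\,\dd a=0$. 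Your velocity barycentre is fine, since $\mathbf 1_{\{R_t=r\}}\in H^0_{R_t}$ for $r>0$.

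Two smaller corrections. First, with the paper's convention $R_0=0$ on $(0,m_0)$, the reflected quantile is $X_0(b)=R_0(2b)$ for $b>0$, not $R_0(2b-m_0)$; your later rescaling $a=2b$ already uses the correct relation. Second, do not take the odd part of $P_{H_{X_t}}\bar V_0$. Instead note that this projection is itself odd, because $H_{X_t}$ is invariant under $W\mapsto-W(-\,\cdot)$; that is what lets you restrict the actual right derivative.
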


\begin{proof}
\emph{Step 1. The one-dimensional projection input.}
On the signed label interval $(-m,m)$, equip mass coordinates with
the reference measure $\beta=\tfrac12\dd a$ and define
\[
 X_0(a)=\operatorname{sgn}(a)R_0(|a|),\qquad
 W_0(a)=\operatorname{sgn}(a)U_0(|a|).
\]
Then $\lambda_0=(X_0)_\#\beta$ has mass $m$, and $W_0$ is
$X_0$-measurable, including on its central zero level interval.
Here $K$ denotes the cone of nondecreasing maps in
$L^2((-m,m),\beta)$, and $H_X$ is the closed subspace of
$\sigma(X)$-measurable functions in this space.
The probability normalization entails no restriction for a finite
measure of mass $m>0$. On a finite interval $J$, write
$\beta=c\,\dd a$ with $c|J|=m$ and use an increasing affine
coordinate $z:J\to(0,1)$. Then $\dd\beta=m\,\dd z$ and
\[
 \int_J|X-Y|^2\,\dd\beta
   =m\int_0^1|X\circ z^{-1}-Y\circ z^{-1}|^2\,\dd z.
\]
The factor $m$ changes neither the metric position projection
nor the orthogonal velocity projection.
Applying the results of Natile and Savar\'e \cite[Theorems~2.3 and 2.6]{NS2009} to $\lambda_0/m$
therefore yields the same projection identities on
$L^2(J,\beta)$ and recovers total mass $m$ after push-forward.
On the signed interval above, the reflected sticky flow has
\[
 \begin{gathered}
 X_t=P_K(X_0+tW_0),\qquad \lambda_t=(X_t)_\#\beta,\\
 W_t=\frac{\dd^+}{\dd t}X_t=P_{H_{X_t}}W_0
       =\mathbb E[W_0\mid\sigma(X_t)].
 \end{gathered}
\]
The right derivative exists in $L^2(\beta)$ at every $t\geq0$.
The map $t\mapsto X_t$ is Lipschitz in $L^2(\beta)$, and
$t\mapsto W_t$ is strongly right-continuous there. In particular,
$(X_t,W_t)\to(X_0,W_0)$ in $L^2(\beta)^2$ as $t\downarrow0$.
With $W_t=v_t\circ X_t$, the pair $(\lambda_t,v_t\lambda_t)$ satisfies
\eqref{eq:line-mass}--\eqref{eq:line-momentum}, including their initial
terms. The mass is Lipschitz continuous in the $2$-Wasserstein distance,
and the momentum is weakly continuous as a finite signed measure.
For every $t>0$,
\[
 v_t(y)-v_t(x)\leq\frac{y-x}{t}
 \quad\text{for }\lambda_t\otimes\lambda_t\text{-almost every }(x,y)
 \text{ with }x<y.
\]
If $|W_0|\leq L$, conditional expectation gives
$|W_t|\leq L$ and hence $|v_t|\leq L$ $\lambda_t$-almost everywhere
at every time.

\emph{Step 2. Identification with the Eulerian entropy solution.}
The initial kinetic-energy trace follows from the strong $L^2$
convergence:
\[
 \|W_t^2-W_0^2\|_{L^1(\beta)}
 \leq(\|W_t\|_{L^2(\beta)}+\|W_0\|_{L^2(\beta)})
                         \|W_t-W_0\|_{L^2(\beta)}\longrightarrow0.
\]
For $\zeta\in C_c(\R)$, the functions $\zeta(X_t)$ are uniformly
bounded and converge in $\beta$-measure to $\zeta(X_0)$. Since
$W_0^2\in L^1(\beta)$, bounded convergence in measure with this fixed
integrable weight gives
\[
 \begin{aligned}
 &\left|\int\zeta\,v_t^2\,\dd\lambda_t
              -\int\zeta\,v_0^2\,\dd\lambda_0\right|\\
 &\quad\leq\|\zeta\|_\infty\|W_t^2-W_0^2\|_{L^1(\beta)}
       +\int|\zeta(X_t)-\zeta(X_0)|W_0^2\,\dd\beta
       \longrightarrow0.
 \end{aligned}
\]
Lemma~\ref{lem:osl-representative} supplies a jointly
Borel velocity representative, preserving $v_t$ $\lambda_t$-almost
everywhere at every time and satisfying the all-pairs OSL bound at
almost every positive time.
With Lemma~\ref{lem:normalized-primitive},
Theorem~\ref{thm:hw-tool} therefore identifies this flow with the reflection
of the radial representation of the solution in Theorem~\ref{thm:main}.

\emph{Step 3. Restriction to the radial position and velocity.}
The isometry $JX(a)=-X(-a)$ preserves $K$ and fixes $X_0+tW_0$.
Uniqueness of the metric position projection gives $JX_t=X_t$.
Restriction to $(0,m)$ maps the odd elements of $K$ onto
$\Kplus$: an odd nondecreasing map is nonnegative on the positive
labels, and the odd extension of a map in $\Kplus$ is
nondecreasing on $(-m,m)$. This restriction preserves the squared
norm, from $L^2((-m,m),\beta)$ to $L^2(0,m)$. The minimizing
property consequently gives
$R_t=P_{\Kplus}(R_0+tU_0)$. Reflection equivalence then yields
$\mu_t=(R_t)_\#\dd a$, proving \eqref{eq:radial-projection}.

The subspace $H_{X_t}$ is invariant under $J$, so $W_t$ is odd.
On the central level interval $\{X_t=0\}$, it is constant and hence
zero. The odd elements of $H_{X_t}$ restrict precisely to $H_{R_t}^0$;
restriction of the orthogonal velocity projection and of the right derivative
therefore gives \eqref{eq:radial-actual-velocity}. Conditional
expectation on the entire level interval at $r>0$ gives the velocity
identity in \eqref{eq:full-shell-barycentres}.

For the position identity, put $F=R_0+tU_0$ and $R=R_t$.
If $h$ is Lipschitz on $[0,\infty)$, has bounded support and
$h(0)=0$, then $r\mapsto r\pm\varepsilon h(r)$ is nondecreasing
and fixes zero whenever $\varepsilon\operatorname{Lip}(h)<1$.
Thus $R\pm\varepsilon h(R)\in\Kplus$. Applying the projection
variational inequality to these two competitors gives
\[
 \int_0^m(F-R)h(R)\,\dd a=0.
\]
For $r>0$, choose uniformly bounded Lipschitz functions supported
away from zero and converging pointwise to $\mathbf1_{\{r\}}$.
Passing to the limit in $\int_0^m(F-R)h(R)\,\dd a=0$ by dominated
convergence gives
$\int_{\{R=r\}}(F-r)\,\dd a=0$, proving the position formula on the
entire level interval.
\end{proof}

The projection describes positions. Formula
\eqref{eq:radial-actual-velocity} describes the velocity after
collisions and absorption; division of the displacement by $t$ does
not give this velocity. In particular, the average initial radial
velocity of absorbed labels need not vanish, although their current
velocity does.

\subsection[Absorbed mass and its temporal structure]{Absorbed mass and its temporal structure}

To extract the absorbed mass from the position projection, we
identify the endpoint of its zero level set.
The cone $\Kplus$ imposes both monotonicity and
nonnegativity, so the zero level set is an initial interval up to null
sets, whose length equals the absorbed mass.
The following lemma identifies the right endpoint of this
entire interval with the largest minimizer of the primitive of the
unprojected map. When the primitive has several minimizers, choosing a
smaller one would omit part of the absorbed mass.
We prove the endpoint identity below directly from the
variational characterization of projection onto the cone $\Kplus$ of nonnegative nondecreasing maps.

\begin{lemma}[Zero set of a cone projection]\label{lem:projection-zero}
For $F\in L^2(0,m)$, let $R=P_{\Kplus}F$ and define the primitive
$A(a)=\int_0^aF(b)\,\dd b$ on $[0,m]$. Then
\begin{equation}\label{eq:zero-set-projection}
 |\{R=0\}|=\max\Argmin_{a\in[0,m]}A(a).
\end{equation}
\end{lemma}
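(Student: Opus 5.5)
Write $R=P_{\Kplus}F$ and $a^*=|\{R=0\}|$. Since $R$ has a nondecreasing nonnegative representative, $\{R=0\}$ is, up to null sets, the interval $(0,a^*)$, and $R>0$ a.e. on $(a^*,m)$. The plan is to use only the variational inequality characterizing the projection onto the closed convex cone $\Kplus$:
\[
 \int_0^m(F-R)(S-R)\,\dd a\le0\qquad(S\in\Kplus).
\]
Since $\Kplus$ is a cone, taking $S=2R$ and $S=0$ gives $\int_0^m(F-R)R\,\dd a=0$. It then remains to show $\int_0^m(F-R)S\,\dd a\le0$ for all $S\in\Kplus$.

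\emph{Step 1: $A(a)\ge A(a^*)$ for all $a$.} Test with $S=\mathbf 1_{(c,m)}\in\Kplus$ for $c\in[0,m]$. This gives $\int_c^m(F-R)\,\dd a\le0$, i.e.
\[
 A(m)-A(c)\le\int_c^mR\,\dd a .
\]
Next test $R+\varepsilon\mathbf 1_{(c,m)}$ and, for $c\ge a^*$, the competitor $R-\varepsilon\mathbf 1_{(c,m)}$. The latter is admissible only approximately, since $R$ may approach $0$ near $a^*$. For this reason I would rather use the competitors $S=\min\{R,\varepsilon\}$-type truncations: for $\varepsilon>0$ both $\max\{R-\varepsilon,0\}$ and $R+\varepsilon\mathbf 1_{\{R>0\}}$ lie in $\Kplus$. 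Setting $D=F-R$, this combination yields $\int_{\{R>\varepsilon\}}D\,\dd a\ge -o(1)$ and $\int_{\{R>0\}}D\,\dd a\le 0$.

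A cleaner route, which I will take, is the following. Every $S\in\Kplus$ is a superposition $S=\int_0^\infty\mathbf 1_{\{S>s\}}\,\dd s$ of indicators $\mathbf 1_{(c,m)}$. Hence the variational inequality is equivalent to
\[
 \Lambda(c):=\int_c^mD\,\dd a\le0\quad(c\in[0,m]),\qquad \int_0^\infty\Lambda(c(s))\,\dd s=0,
\]
where $c(s)$ is the left endpoint of $\{R>s\}$. The second identity comes from $\int DR=0$. Since $\Lambda\le0$, we get $\Lambda(c(s))=0$ for a.e. $s>0$. Because $c(s)\downarrow a^*$ as $s\downarrow0$ and $\Lambda$ is continuous, this gives $\Lambda(a^*)=0$.

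\emph{Step 2: conversion to $A$.} On $(0,a^*)$ we have $R=0$, so $D=F$ there. Hence for $c\le a^*$,
\[
 \Lambda(c)=\Lambda(a^*)+A(a^*)-A(c)=A(a^*)-A(c),
\]
and $\Lambda\le0$ gives $A(c)\ge A(a^*)$. For $c\ge a^*$ we write
\[
 A(c)-A(a^*)=\int_{a^*}^cF\,\dd a=\int_{a^*}^c R\,\dd a+\bigl(\Lambda(a^*)-\Lambda(c)\bigr)=\int_{a^*}^cR\,\dd a-\Lambda(c)\ge\int_{a^*}^cR\,\dd a .
\]
The last quantity is strictly positive for $c>a^*$, because $R>0$ a.e. on $(a^*,m)$. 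Thus $a^*$ is a minimizer of $A$ on $[0,m]$, and every $c>a^*$ satisfies $A(c)>A(a^*)$. This gives $a^*=\max\Argmin A$, which is \eqref{eq:zero-set-projection}.

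The main obstacle is Step 1, namely justifying the layer-cake equivalence. We need $\int DS=\int_0^\infty\Lambda(c_S(s))\,\dd s$ for $S\in\Kplus$. This holds by Fubini provided $\int |D|\,S<\infty$, which follows from Cauchy--Schwarz in $L^2$. The degenerate cases are straightforward: if $a^*=m$ then $R=0$, so $A(c)\ge A(m)$ directly from $\Lambda\le0$; if $a^*=0$ then $c(s)\to0$ and the same argument applies.
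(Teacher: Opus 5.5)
Your proof is correct and has the same structure as the paper's proof. The indicator tests $\mathbf 1_{(c,m)}$ give $\Lambda\le0$, the tests $0$ and $2R$ give $\int_0^m(F-R)R\,\dd a=0$, the key fact is $\Lambda(a^*)=0$, and your Step~2 comparison of $A(c)$ with $A(a^*)$ on either side of $a^*$ is exactly the paper's. The only variation is how you get $\Lambda(a^*)=0$: the paper uses the truncation competitor $(R-\varepsilon)_+$ that you considered and set aside, obtaining $\int_0^m(F-R)\min\{R,\varepsilon\}\,\dd a\ge0$ and letting $\varepsilon\downarrow0$ by dominated convergence, whereas your layer-cake identity $\int_0^m(F-R)R\,\dd a=\int_0^\infty\Lambda(c(s))\,\dd s$, with $\Lambda\le0$ and continuity of $\Lambda$, reaches the same conclusion. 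Since $\min\{R,\varepsilon\}=\int_0^\varepsilon\mathbf 1_{\{R>s\}}\,\dd s$, the two computations are essentially the same.
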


\begin{proof}
Set $Z=F-R$ and $b=|\{R=0\}|$, so that $R=0$ on $(0,b)$
and $R>0$ on $(b,m)$, up to null sets. The cone projection
inequality gives
\[
 \int_0^m ZR\,\dd a=0,\qquad
 \int_a^m Z\,\dd c\leq0\quad(0\leq a\leq m),
\]
by testing with $0$, $2R$ and $\mathbf1_{(a,m)}$.
The admissible competitor $(R-\varepsilon)_+$ also gives
\[
 \int_0^m Z\min\{R,\varepsilon\}\,\dd a\geq0.
\]
Since $\varepsilon^{-1}\min\{R,\varepsilon\}\to\mathbf1_{(b,m)}$
almost everywhere and is bounded by one, dominated convergence
and $\int_b^mZ\,\dd a\leq0$ yield $\int_b^mZ\,\dd a=0$.
Using $\int_a^m Z\,\dd c\leq0$ and $\int_b^mZ\,\dd a=0$, we compare
$A(b)$ with every value of $A$:
\[
 \begin{aligned}
 A(b)-A(a)&=\int_a^bZ\,\dd c\leq0
       &&(0\leq a\leq b),\\
 A(a)-A(b)&=\int_b^a(Z+R)\,\dd c
          \geq\int_b^aR\,\dd c>0
       &&(b<a\leq m).
 \end{aligned}
\]
Thus $b$ is a minimizer and no larger label is a minimizer. The
same argument covers $b=0$ and $b=m$.
\end{proof}

For the free-position map, the primitive in the lemma is affine
in time. We write it in terms of the continuous initial-position and initial-velocity
primitives:
\begin{equation}\label{eq:absorption-primitives}
 B(a)=\int_0^aR_0(b)\,\dd b,\qquad
 C(a)=\int_0^aU_0(b)\,\dd b,\qquad A_t(a)=B(a)+tC(a),
\end{equation}
and write $m_0=\mu_0(\{0\})$. Both $B$ and $C$ vanish on $[0,m_0]$;
if $m_0<m$, the function $B$ is strictly increasing on $[m_0,m]$
because $R_0>0$ almost everywhere on $(m_0,m)$.

Moutsinga \cite[Propositions~4.2--4.3]{Moutsinga2008}
proves temporal nesting and c\`adl\`ag regularity of cluster-label
endpoints, together with a joint space--time limit property, under a
negative-jump condition on the initial velocity. Here the strict
increase of $B$ on $[m_0,m]$ orders all minimizers at distinct times.
Minimization on $[m_0,m]$ identifies the right and left limits
of the absorbed mass, without that additional condition on $u_0$.

We say that absorption is continuous at $t>0$ if the absorbed mass
$M(t)=\mu_t(\{0\})$ is continuous there; its Stieltjes measure
may still have a singular continuous part.
We call absorption absolutely continuous on a time interval in
$(0,\infty)$ when $\dd M\ll\dd t$ there.

\begin{theorem}[Absorbed mass and its temporal structure]\label{thm:absorption}
Let $(\mu,u)$ be the radial representation in Theorem~\ref{thm:projection},
with $A_t,B,C$ defined in \eqref{eq:absorption-primitives} and
$m_0=\mu_0(\{0\})$. Its absorbed mass
$M(t)=\mu_t(\{0\})$ has the following properties.
\begin{enumerate}
\item[\textup{(i)}] For every $t\geq0$,
\begin{equation}\label{eq:absorption-argmin}
 M(t)=\max\Argmin_{a\in[0,m]}A_t(a)
     =\max\Argmin_{a\in[m_0,m]}A_t(a).
\end{equation}
The function $M$ is nondecreasing and right continuous on
$[0,\infty)$, with $M(0)=m_0$. For every $t>0$,
\begin{equation}\label{eq:absorption-left-limit}
 M(t-)=\min\Argmin_{a\in[m_0,m]}A_t(a).
\end{equation}
\item[\textup{(ii)}] At a positive time $t$, absorption is continuous
if and only if $A_t$ has a unique minimizer on $[m_0,m]$.
Writing $\alpha,\beta$ for its smallest and largest minimizers,
$M$ has a jump exactly when $\alpha<\beta$. Its size is then
$\beta-\alpha$, and
\begin{equation}\label{eq:absorption-jump-balance}
 \int_\alpha^\beta(R_0+tU_0)\,\dd a=0,\qquad
 \int_\alpha^a(R_0+tU_0)\,\dd b\geq0
       \quad(\alpha\leq a\leq\beta).
\end{equation}
\item[\textup{(iii)}] With $L=\|u_0\|_{L^\infty(\mu_0)}$, for every
$t\geq0$ one has
\begin{equation}\label{eq:absorption-finite-speed}
 m_0\leq M(t)\leq\mu_0([0,Lt]).
\end{equation}
\end{enumerate}
\end{theorem}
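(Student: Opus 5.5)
Combining Theorem~\ref{thm:projection}(i) with Lemma~\ref{lem:projection-zero}, applied to $F=R_0+tU_0$, gives the first equality in \eqref{eq:absorption-argmin}. Indeed, $\mu_t=(R_t)_\#\dd a$ yields $M(t)=|\{R_t=0\}|$, and the primitive of $F$ is $A_t$. For the second equality, $R_0=0$ and $U_0=0$ almost everywhere on $(0,m_0)$, so $A_t\equiv0$ on $[0,m_0]$. Hence the minimum over $[0,m]$ is at most $0=A_t(m_0)$. The largest minimizer therefore lies in $[m_0,m]$ and coincides with the largest minimizer over $[m_0,m]$; this also gives the lower bound in (iii). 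At $t=0$, $A_0=B$ vanishes on $[0,m_0]$ and is strictly increasing after $m_0$, so $M(0)=m_0$. Monotonicity is already given by Proposition~\ref{prop:sticking}, but I will also recover it from an ordering lemma.

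The key step is that ordering. Let $s<t$, and let $a_s$ minimize $A_s$ and $a_t$ minimize $A_t$ on $[m_0,m]$. Then $a_s\le a_t$. Adding the two minimality inequalities $A_s(a_s)\le A_s(a_t)$ and $A_t(a_t)\le A_t(a_s)$ gives $(t-s)(C(a_t)-C(a_s))\le0$. Writing $A_t=\frac ts A_s+(1-\frac ts)B$ yields $A_t(a_t)-A_t(a_s)=\frac ts(A_s(a_t)-A_s(a_s))+(1-\frac ts)(B(a_t)-B(a_s))$. If $a_t<a_s$, the first term is $\ge0$, and the second term is $>0$ by the strict increase of $B$ on $[m_0,m]$ (note $1-t/s<0$). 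This contradicts minimality of $a_t$. For right continuity, I use the uniform convergence $A_s\to A_t$ on $[m_0,m]$ as $s\to t$, which holds since $|A_s-A_t|\le|s-t|\,\|C\|_\infty$. Cluster points of minimizers of $A_s$ are then minimizers of $A_t$. As $s\downarrow t$, the ordering gives $M(s)\ge\max\Argmin A_t=M(t)$, while the cluster point $\lim M(s)$ is a minimizer and so is $\le M(t)$. As $s\uparrow t$, $M(s)\le\min\Argmin A_t$ by the ordering, and $\lim M(s)$ is a minimizer, hence $\ge\min\Argmin A_t$; this proves \eqref{eq:absorption-left-limit}. I expect the main obstacle to be the ordering step, specifically handling $m_0=m$ and the degenerate case $s=0$, where $A_0=B$ has the unique minimizer $m_0$.

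Part (ii) then follows at once. $M$ is continuous at $t$ exactly when $M(t-)=M(t)$, that is, when $\alpha=\beta$, and the jump equals $\beta-\alpha$. The relation \eqref{eq:absorption-jump-balance} reads $A_t(\beta)-A_t(\alpha)=0$ and $A_t(a)-A_t(\alpha)\ge0$, both consequences of $\alpha$ and $\beta$ being minimizers.

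For the upper bound in (iii), set $a^*=\mu_0([0,Lt])$. For $a>a^*$ we have $R_0(b)>Lt$ for almost every $b\in(a^*,a)$, so $R_0+tU_0\ge R_0-Lt>0$ almost everywhere on $(a^*,m)$. Thus $A_t$ is strictly increasing on $[a^*,m]$, and no minimizer exceeds $a^*$. The claimed identity $R_0>Lt$ a.e.\ beyond $a^*$ follows from the definition of the quantile, since $R_0(b)\le Lt$ forces $b<\mu_0([0,Lt])$.
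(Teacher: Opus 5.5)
Your proof is correct and follows the paper's argument essentially step for step. The only variation is in the ordering step: your identity $A_t=\frac ts A_s+(1-\frac ts)B$ requires $s>0$, so $s=0$ must be handled separately, whereas the paper reads off $C(a)\ge C(b)$ and then $B(b)-B(a)\ge s\,(C(a)-C(b))\ge0$ directly for all $s\ge0$. One small slip in (iii): $R_0(b)\le Lt$ only gives $b\le\mu_0([0,Lt])$, not strict inequality, but this weaker implication is exactly what yields $R_0>Lt$ on $(a^*,m)$, so the finite-speed bound is unaffected.
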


\begin{proof}
\emph{Step 1. The absorbed labels and the largest minimizer.}
The push-forward formula $\mu_t=(R_t)_\#\dd a$ and
Lemma~\ref{lem:projection-zero} give the first equality in
\eqref{eq:absorption-argmin}. Since $B=C=0$ on $[0,m_0]$,
a largest minimizer lies in $[m_0,m]$, which gives the second
equality. If $m_0=m$, then $A_t=0$ and $M(t)=m$ for all $t$,
so all the assertions hold. Suppose henceforth that $m_0<m$.

\emph{Step 2. Ordering of minimizers and temporal traces.}
For $0\leq s<t$, let $a$ minimize $A_s$ and let $b$ minimize
$A_t$, both on $[m_0,m]$. The inequalities
$A_s(a)\leq A_s(b)$ and $A_t(b)\leq A_t(a)$ are equivalent to
\[
 s\bigl(C(a)-C(b)\bigr)
 \leq B(b)-B(a)
 \leq t\bigl(C(a)-C(b)\bigr).
\]
Since $s<t$, these bounds give $C(a)\geq C(b)$ and
$B(b)\geq B(a)$. Strict increase of $B$ on $[m_0,m]$ then gives
$a\leq b$. This orders every earlier minimizer before every later
one; in particular, $M$ is nondecreasing.

The uniform convergence $A_s\to A_t$ as $s\to t$ implies that
any limit of minimizers of $A_s$ minimizes $A_t$. For $s\downarrow t$,
the ordering gives $\lim M(s)\geq M(t)$, whereas the maximality
in \eqref{eq:absorption-argmin} gives $\lim M(s)\leq M(t)$.
This proves right continuity, also at $t=0$, where $B$ has the
unique minimizer $m_0$ on $[m_0,m]$. For $s\uparrow t$, the same
ordering bounds $M(s)$ above by every minimizer of $A_t$.
Its limit is itself a minimizer, and hence equals the smallest one.
This proves \eqref{eq:absorption-left-limit}.

\emph{Step 3. Jumps and the finite-speed bound.}
By \eqref{eq:absorption-argmin} and \eqref{eq:absorption-left-limit},
continuity of absorption is equivalent to coincidence of the extreme
minimizers, and a jump has size $\beta-\alpha$.
At a jump time,
$A_t(\alpha)=A_t(\beta)\leq A_t(a)$ for
$\alpha\leq a\leq\beta$; taking differences gives
\eqref{eq:absorption-jump-balance}. This argument does not require
the minimizer set to be an interval.

For the upper bound in \eqref{eq:absorption-finite-speed}, put
$k=\mu_0([0,Lt])$. If $M(t)>k$, choose $a\in(k,M(t))$.
The quantile satisfies $R_0>Lt$ almost everywhere on $(a,M(t))$,
so
\[
 A_t(M(t))-A_t(a)
 =\int_a^{M(t)}(R_0+tU_0)\,\dd b
 \geq\int_a^{M(t)}(R_0-Lt)\,\dd b>0,
\]
contrary to the minimality of $M(t)$. The lower bound is already
contained in \eqref{eq:absorption-argmin}.
\end{proof}

The largest minimizer includes labels arriving exactly at the
absorption time. The minimizers of $A_t$ need not form an interval: the primitive may be strictly above its minimum between them,
although the intervening labels lie at zero after projection.

\begin{remark}[The initial atom in the left limit]\label{rem:origin-atom-left-trace}
The interval $[m_0,m]$ in \eqref{eq:absorption-left-limit} retains the
mass already at the origin. For $\mu_0=\alpha\delta_0+\beta\delta_1$,
$u_0(0)=0$ and $u_0(1)=-1$, with $\alpha,\beta>0$, one has
$m_0=\alpha$ and
\[
 A_t(a)=0\quad(0\leq a\leq\alpha),\qquad
 A_t(a)=(a-\alpha)(1-t)\quad(\alpha\leq a\leq\alpha+\beta).
\]
At $t=1$ every label minimizes $A_1$, but $M(1-)=\alpha$ and
$M(1)=\alpha+\beta$. Taking the smallest minimizer over $[0,m]$ would
incorrectly include the initial atom in the jump.
\end{remark}

The following corollaries give timing and rate criteria; the asymptotic
formula is also used to compute the total impulse in
Theorem~{\ref{thm:boundary-momentum}}.

The endpoint characterization also determines when absorption
begins and when it is complete. Both questions reduce to comparing
the primitive with its values at the endpoints of the label interval.

\begin{corollary}[Onset and completion times]\label{cor:absorption-times}
Assume $m_0<m$.
\begin{enumerate}
\item[\textup{(i)}] Define the onset time by
\begin{equation}\label{eq:absorption-onset}
 T_{\mathrm{on}}=
 \inf_{\substack{m_0<a\leq m\\ C(a)<0}}
       \frac{B(a)}{-C(a)},
 \qquad \inf\varnothing=+\infty.
\end{equation}
Then $M(t)=m_0$ for $0\leq t<T_{\mathrm{on}}$. If
$T_{\mathrm{on}}<\infty$, then $M(t)>m_0$ for
$t>T_{\mathrm{on}}$. If $0<T_{\mathrm{on}}<\infty$, the mass at
the onset time is
\begin{equation}\label{eq:absorption-onset-equality}
 M(T_{\mathrm{on}})=
 \max\left(\{m_0\}\cup
 \left\{a>m_0:C(a)<0,\ 
          \frac{B(a)}{-C(a)}=T_{\mathrm{on}}\right\}\right).
\end{equation}
In this case onset is a jump if and only if the infimum is attained
at a label greater than $m_0$.

\item[\textup{(ii)}] For $m_0\leq a<m$, set
\[
 Q(a)=
 \begin{cases}
 \displaystyle\frac{\int_a^mR_0(b)\,\dd b}
                    {-\int_a^mU_0(b)\,\dd b},
          &\displaystyle\int_a^mU_0(b)\,\dd b<0,\\[6pt]
 +\infty,&\displaystyle\int_a^mU_0(b)\,\dd b\geq0.
 \end{cases}
\]
The completion time is
\begin{equation}\label{eq:absorption-completion}
 T_{\mathrm{all}}=\sup_{m_0\leq a<m}Q(a).
\end{equation}
If $T_{\mathrm{all}}<\infty$, then $M(t)=m$ if and only if
$t\geq T_{\mathrm{all}}$. If $T_{\mathrm{all}}=\infty$,
complete absorption does not occur at a finite time.
\end{enumerate}
\end{corollary}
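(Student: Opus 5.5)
Both parts will follow from Theorem~\ref{thm:absorption}, which gives $M(t)=\max\Argmin_{[m_0,m]}A_t$ and $M(t-)=\min\Argmin_{[m_0,m]}A_t$, together with the monotonicity and right continuity of $M$. In each case I compare $A_t(a)$ with the value at an endpoint label ($m_0$ for onset, $m$ for completion), using that $A_t(m_0)=0$ and that $A_t$ is affine in $t$ for each fixed label.

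\emph{Onset.} For $a>m_0$ we have $B(a)>0$, because $B$ is strictly increasing on $[m_0,m]$. Hence $A_t(a)\le 0$ holds exactly when $C(a)<0$ and $t\ge B(a)/(-C(a))$, with $A_t(a)<0$ when $t>B(a)/(-C(a))$.
\begin{itemize}
\item If $t<T_{\mathrm{on}}$, then $A_t(a)>0$ for every $a>m_0$. So $m_0$ is the unique minimizer and $M(t)=m_0$.
\item If $t>T_{\mathrm{on}}$, some label $a>m_0$ has $B(a)/(-C(a))<t$, so $A_t(a)<0=A_t(m_0)$. The minimum is then negative, and every minimizer exceeds $m_0$; hence $M(t)>m_0$.
\item At $t=T_{\mathrm{on}}\in(0,\infty)$, $A_t\ge0$ on $[m_0,m]$. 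Its zero set is $\{m_0\}$ together with the labels $a>m_0$ with $C(a)<0$ and $B(a)/(-C(a))=T_{\mathrm{on}}$. Taking the largest element of this set gives \eqref{eq:absorption-onset-equality}.
\end{itemize}
For the jump criterion, the smallest minimizer at $T_{\mathrm{on}}$ is $m_0$, so $M(T_{\mathrm{on}}-)=m_0$ by \eqref{eq:absorption-left-limit}. Onset is therefore a jump exactly when this zero set contains a label $a>m_0$, that is, when the infimum is attained there. The maximum in \eqref{eq:absorption-onset-equality} exists because the zero set of the continuous function $A_t$ is closed. If a sequence of attaining labels decreased to $m_0$, it could only converge to $m_0$, and this does not affect taking the max.

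\emph{Completion.} By \eqref{eq:absorption-argmin}, $M(t)=m$ iff $m$ minimizes $A_t$ on $[m_0,m]$, i.e.\ iff $A_t(m)-A_t(a)=\int_a^m(R_0+tU_0)\,\dd b\le0$ for all $a\in[m_0,m)$. I then treat each label $a$ separately:
\begin{itemize}
\item If $\int_a^mU_0<0$, the condition at $a$ is equivalent to $t\ge Q(a)$.
\item If $\int_a^mU_0\ge0$, the condition fails for all $t$, since $\int_a^mR_0>0$ (because $R_0>0$ a.e.\ on $(m_0,m)$). This matches $Q(a)=+\infty$.
\end{itemize}
Thus $M(t)=m$ iff $t\ge Q(a)$ for all $a$, i.e.\ iff $t\ge\sup_a Q(a)=T_{\mathrm{all}}$. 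If $T_{\mathrm{all}}=\infty$, no finite $t$ qualifies.

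\emph{Main obstacle.} The only real care point is the boundary case at the onset time. There I must distinguish correctly between nonattainment of the infimum, attainment only in the limit $a\downarrow m_0$, and attainment at a genuine label $a>m_0$, and read off the jump from the gap between the extreme minimizers.
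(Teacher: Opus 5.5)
Your proposal is correct and follows the paper's proof essentially step for step. The shared steps are the sign analysis of $A_t(a)=B(a)+tC(a)$ for $a>m_0$ using $B(a)>0$, and the identification of the zero set of $A_{T_{\mathrm{on}}}$ with $\{m_0\}$ together with the labels attaining the infimum. They also include the use of $M(T_{\mathrm{on}}-)=m_0$ for the jump criterion and the label-by-label reduction of $M(t)=m$ to $t\geq Q(a)$ via $\int_a^mR_0\,\dd b>0$. Your closedness remark, which guarantees that the maximum in \eqref{eq:absorption-onset-equality} exists, spells out a small detail the paper leaves implicit.
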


\begin{proof}
For $a>m_0$, strict increase of $B$ gives $B(a)>0$, and hence,
for $t\geq0$,
\[
 A_t(a)\leq0
 \quad\Longleftrightarrow\quad
 C(a)<0\ \text{ and }\ t\geq\frac{B(a)}{-C(a)}.
\]
Thus $A_t>0$ on $(m_0,m]$ when $t<T_{\mathrm{on}}$, whereas
$A_t(a)<0$ for some $a>m_0$ when
$t>T_{\mathrm{on}}$ and $T_{\mathrm{on}}<\infty$.
Formula~\eqref{eq:absorption-argmin} gives the corresponding values
$M(t)=m_0$ and $M(t)>m_0$.
At a finite positive onset time, $A_{T_{\mathrm{on}}}\geq0$
and its zeros outside $[0,m_0]$ are exactly the labels attaining
the infimum. This gives \eqref{eq:absorption-onset-equality}.
Since $M(T_{\mathrm{on}}-)=m_0$, it also proves the jump criterion.

At the other endpoint, \eqref{eq:absorption-argmin} gives
\[
 M(t)=m\quad\Longleftrightarrow\quad
 \int_a^m(R_0+tU_0)\,\dd b\leq0
                  \quad\text{for all }a\in[m_0,m).
\]
For each such $a$, the position integral $\int_a^mR_0\,\dd b$
is strictly positive. The condition $\int_a^m(R_0+tU_0)\,\dd b\leq0$
therefore requires
$\int_a^mU_0\,\dd b<0$ and is then equivalent to $t\geq Q(a)$.
If the velocity integral is nonnegative, no finite $t$ satisfies
it, as encoded by $Q(a)=+\infty$. Taking the supremum over $a$
proves \eqref{eq:absorption-completion}, including equality at the
completion time.
\end{proof}

The large-time limit is governed by the initial-velocity primitive, with
the initial-position primitive selecting the smallest minimizer.

\begin{corollary}[Asymptotic absorbed mass]\label{cor:asymptotic-absorption}
Let $C$ be the initial-velocity primitive in
\eqref{eq:absorption-primitives}. The asymptotic absorbed mass is its
smallest minimizer on $[m_0,m]$:
\begin{equation}\label{eq:absorption-asymptotic}
 \lim_{t\to\infty}M(t)
       =\min\Argmin_{a\in[m_0,m]}C(a).
\end{equation}
\end{corollary}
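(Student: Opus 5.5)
We reduce the large-time limit to a minimization of $C$ on $[m_0,m]$. Since $M$ is nondecreasing and bounded by $m$ (Theorem~\ref{thm:absorption}), the limit $M_\infty=\lim_{t\to\infty}M(t)$ exists. We may assume $m_0<m$; otherwise $M\equiv m$ and $C\equiv0$, so both sides equal $m$. Set $\gamma=\min\Argmin_{a\in[m_0,m]}C(a)$. The proof is two inequalities, each obtained from the ordering and minimality relations already used in Step~2 of the proof of Theorem~\ref{thm:absorption}.

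\emph{Upper bound $M_\infty\leq\gamma$.} Fix $t>0$ and write $a=M(t)$, which minimizes $A_t$ on $[m_0,m]$. Then $A_t(a)\leq A_t(\gamma)$, that is,
\[
 B(a)-B(\gamma)\leq t\bigl(C(\gamma)-C(a)\bigr)\leq0,
\]
since $\gamma$ minimizes $C$. Hence $B(a)\leq B(\gamma)$, and strict increase of $B$ on $[m_0,m]$ gives $a\leq\gamma$. So $M(t)\leq\gamma$ for every $t$, and therefore $M_\infty\leq\gamma$.

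\emph{Lower bound $M_\infty\geq\gamma$.} Divide by $t$ and note that $A_t/t=C+B/t\to C$ uniformly on $[m_0,m]$, because $B$ is bounded there. Every limit point of minimizers of $A_t/t$ is therefore a minimizer of $C$, so $M_\infty\in\Argmin C$. Consequently $M_\infty\geq\gamma$ by the definition of $\gamma$. Combining the two bounds yields \eqref{eq:absorption-asymptotic}.

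The only delicate point is that $\Argmin C$ may contain several points, and a compactness argument alone does not say which one $M_\infty$ selects. The comparison in the upper bound settles this: it shows that $B$ forces $M(t)$ to stay below every minimizer of $C$, and in particular below the smallest one. Nothing beyond the strict monotonicity of $B$ and the argmin characterization \eqref{eq:absorption-argmin} is needed.
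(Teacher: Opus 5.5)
Your proposal is correct and follows essentially the same route as the paper's proof. The upper bound uses the comparison $A_t(M(t))\le A_t(\gamma)$ together with $C(\gamma)\le C(M(t))$ and strict increase of $B$, and the lower bound uses uniform convergence $t^{-1}A_t\to C$ to place the monotone limit in $\Argmin C$ (the degenerate case $m_0=m$ is handled identically).
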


\begin{proof}
When $m_0<m$, let $c_*=\min\Argmin_{[m_0,m]}C$.
The minimizing properties of $M(t)$ and $c_*$ give
\[
 B(M(t))+tC(M(t))\leq B(c_*)+tC(c_*),\qquad
 C(M(t))\geq C(c_*).
\]
Consequently $B(M(t))\leq B(c_*)$, and strict increase of $B$
yields $M(t)\leq c_*$. The monotone limit
$\ell=\lim_{t\to\infty}M(t)$ therefore satisfies $\ell\leq c_*$.
On the other hand, $t^{-1}A_t\to C$ uniformly on $[m_0,m]$,
so the minimizing inequality for $M(t)$ passes to
$C(\ell)\leq C(a)$ for every $a\in[m_0,m]$.
The definition of $c_*$ then gives $\ell\geq c_*$.
If $m_0=m$, both sides of \eqref{eq:absorption-asymptotic} equal $m$.
\end{proof}

By Theorem~\ref{thm:absorption}, a unique minimizer yields continuity
of absorption, but a rate
requires local regularity and nondegeneracy. Under these additional
assumptions, the minimizing label can be followed by the implicit
function theorem.

\begin{corollary}[A regular absorption rate]\label{cor:regular-absorption}
Suppose that, for some $t_0>0$, the primitive $A_{t_0}$ has a
unique minimizer $a_0\in(m_0,m)$. Assume that $R_0$ and $U_0$
admit $C^1$ representatives near $a_0$ and satisfy
\[
 R_0'(a_0)+t_0U_0'(a_0)>0.
\]
Then $M$ is $C^1$ in a neighborhood of $t_0$, where
\begin{equation}\label{eq:regular-absorption-rate}
 M'(t)=\frac{-U_0(M(t))}
                  {R_0'(M(t))+tU_0'(M(t))}>0.
\end{equation}
\end{corollary}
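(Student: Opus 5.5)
The plan is to follow the minimizing label with the implicit function theorem applied to the first-order condition $\partial_aA_t(a)=R_0(a)+tU_0(a)=0$, and then to use the uniqueness and ordering of minimizers from Theorem~\ref{thm:absorption} to identify the resulting branch with $M(t)$.

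\emph{First-order condition.} Since $a_0\in(m_0,m)$ is an interior minimizer of $A_{t_0}$ and $R_0,U_0$ are $C^1$ near $a_0$, $A_{t_0}$ is $C^2$ there. Hence
\[
 F(t,a):=R_0(a)+tU_0(a)
\]
vanishes at $(t_0,a_0)$, and $\partial_aF(t_0,a_0)=R_0'(a_0)+t_0U_0'(a_0)>0$ by hypothesis. Because $F$ is $C^1$ jointly near $(t_0,a_0)$, the implicit function theorem gives a $C^1$ curve $a(t)$ on some interval $|t-t_0|<\delta$, with $a(t_0)=a_0$ and $F(t,a(t))=0$. Differentiating yields
\[
 a'(t)=-U_0(a(t))\big/\bigl(R_0'(a(t))+tU_0'(a(t))\bigr).
\]
Shrinking the neighborhood keeps $\partial_aF>0$, so $A_t$ is strictly convex near $a_0$ and $a(t)$ is a strict local minimizer of $A_t$.

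\emph{Identifying $a(t)$ with $M(t)$.} This is the main step. Since $a_0$ is the unique minimizer on the compact interval $[m_0,m]$ and $A_s\to A_{t_0}$ uniformly, every minimizer of $A_t$ on $[m_0,m]$ lies in any prescribed neighborhood $(a_0-\eta,a_0+\eta)$ once $|t-t_0|$ is small. This is the same cluster-point argument used in Step~2 of the proof of Theorem~\ref{thm:absorption}. Choose $\eta$ so that $A_t$ is strictly convex on this neighborhood for all such $t$. Every minimizer there is an interior critical point, and strict convexity makes it unique, so it must equal $a(t)$. Consequently the minimizer set of $A_t$ on $[m_0,m]$ is $\{a(t)\}$. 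By \eqref{eq:absorption-argmin} and \eqref{eq:absorption-left-limit}, $M(t)=M(t-)=a(t)$, so $M$ is $C^1$ near $t_0$ and \eqref{eq:regular-absorption-rate} holds.

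\emph{Positivity of the rate.} Here $M'(t)\ge0$ follows from monotonicity of $M$. For the strict sign, $F(t,M(t))=0$ gives $U_0(M(t))=-R_0(M(t))/t$. Since $M(t)>m_0$, the quantile $R_0$ is positive on $(m_0,m)$; as a $C^1$ nondecreasing representative it satisfies $R_0(M(t))\ge R_0$ at labels slightly smaller than $M(t)$, which are still greater than $m_0$, and so $R_0(M(t))>0$. Hence $U_0(M(t))<0$ and the numerator $-U_0(M(t))$ is positive. The denominator is positive by continuity from $t_0$, so $M'(t)>0$.
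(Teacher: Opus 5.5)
Your proposal is correct and follows the paper's proof. Both arguments apply the implicit function theorem to $R_0+tU_0=0$ and use strict convexity of $A_t$ on a fixed neighborhood of $a_0$. Both show that $a_0$ stays the unique global minimizer for nearby $t$ (you argue via cluster points of minimizers, the paper via a persisting strict gap on the compact complement; these are the same compactness argument), identify it with $M(t)$ through \eqref{eq:absorption-argmin}, and get the sign from $R_0(M(t))>0$ and the stationarity equation.
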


\begin{proof}
The interior minimum satisfies $R_0(a_0)+t_0U_0(a_0)=0$.
The implicit function theorem provides a $C^1$ curve $a(t)$
through $a_0$ such that $R_0(a(t))+tU_0(a(t))=0$.
Choose a fixed interval around $a_0$, contained in $(m_0,m)$,
on which $R_0'+tU_0'>0$ for all nearby $t$.
On this interval $A_t$ is strictly convex and $a(t)$ is its
unique minimizer. On the compact complement, the strict gap
above $A_{t_0}(a_0)$ persists for nearby $t$, by uniform
convergence of $A_t$ and continuity of $A_t(a(t))$.
Thus $a(t)$ remains the unique global minimizer, and
\eqref{eq:absorption-argmin} identifies it with $M(t)$.

Differentiation along this curve gives
\[
 \bigl(R_0'(M(t))+tU_0'(M(t))\bigr)M'(t)+U_0(M(t))=0.
\]
The coefficient of $M'(t)$ is positive. Since $M(t)>m_0$,
monotonicity and nonnegativity of $R_0$ give $R_0(M(t))>0$;
the stationary equation then yields
$U_0(M(t))=-R_0(M(t))/t<0$. This proves
\eqref{eq:regular-absorption-rate}.
\end{proof}

\subsection[Radial scalar momentum balance and boundary impulse]{Radial scalar momentum balance and boundary impulse}\label{subsec:boundary-momentum}

The temporal description of absorption identifies the labels whose
radial momentum is removed at the origin. A pointwise boundary flux
need not exist, so we describe this contribution by a Stieltjes measure.
The balance is scalar: the full-space vector momentum still satisfies
\eqref{eq:full-momentum-weak}.

Neumann et al.~\cite{NOSS2022} obtain a counterpart of the
total momentum formula in the absorbing branch of a half-line
initial-boundary problem. Nedeljkov et al.~\cite{NNOS2017} describe the
radial momentum balance for shadow waves with boundary traces.

Under the finite-mass and second-moment assumptions of
Section~\ref{subsec:radial-projection}, we derive the radial scalar momentum
balance directly from the projection and minimizer structure. The
velocity projection \eqref{eq:radial-actual-velocity} identifies total
radial momentum with the initial momentum of the unabsorbed labels.
The minimum-value function of $A_t$ is
\begingroup
\begin{equation}\label{eq:minimum-value-function}
 h(t):=\min_{a\in[m_0,m]}A_t(a),\qquad t\geq0.
\end{equation}
\endgroup
 It then yields the weighted Stieltjes
identity, valid for jumps and for both absolutely continuous and
singular continuous absorption.

\begin{theorem}[Radial scalar momentum balance and boundary impulse]\label{thm:boundary-momentum}
Let $(\mu,u)$ be the radial representation of the solution in
Theorem~\ref{thm:main}, under the finite-mass and second-moment
assumptions of Section~\ref{subsec:radial-projection}, with the scalar radial velocity
identified $\mu_t$-almost everywhere by \eqref{eq:radial-actual-velocity}.
Let $B,C$ be the primitives in \eqref{eq:absorption-primitives}, and set
\[
 M(t)=\mu_t(\{0\}),\qquad
 P(t)=\int_{[0,\infty)}u(t,r)\,\mu_t(\dd r).
\]
Then the following properties hold.
\begin{enumerate}
\item[(i)] The total radial momentum $P$ is nondecreasing and right
continuous on $[0,\infty)$ and satisfies, for every $t\geq0$,
\begin{equation}\label{eq:boundary-total-momentum}
 P(t)=C(m)-C(M(t)).
\end{equation}
\item[(ii)] Its Stieltjes measure $\mathcal I=\dd P$ on $(0,\infty)$
satisfies the measure identities and, for every $t\geq0$, the
cumulative identity
\begin{equation}\label{eq:boundary-impulse-measure}
 \begin{gathered}
 0\leq\mathcal I\leq L\,\dd M,
 \qquad t\,\mathcal I=\dd(B\circ M),\qquad L=\|U_0\|_\infty,\\
 B(M(t))=\int_{(0,t]}s\,\mathcal I(\dd s).
 \end{gathered}
\end{equation}
For every $\psi\in C_c^1([0,\infty)\times[0,\infty))$,
 the same radial entropy solution satisfies
the boundary balance of radial scalar momentum
\begin{equation}\label{eq:boundary-momentum-weak}
 \begin{split}
 &\int_0^\infty\!\int_{[0,\infty)}
       (u\psi_t+u^2\psi_r)\,\dd\mu_t\,\dd t
  +\int_{[0,\infty)}u_0(r)\psi(0,r)\,\dd\mu_0\\
 &\hspace{38mm}+\int_{(0,\infty)}\psi(t,0)\,\mathcal I(\dd t)=0.
 \end{split}
\end{equation}
No vanishing condition is imposed on $\psi(t,0)$.
For tests satisfying $\psi(t,0)=0$, this reduces to
{\eqref{eq:radial-momentum-weak}}.
\item[(iii)] For every $t>0$, writing $\alpha=M(t-)$ and $\beta=M(t)$,
\begin{equation}\label{eq:boundary-impulse-jump}
 \mathcal I(\{t\})=-\int_\alpha^\beta U_0(a)\,\dd a
                  =\frac1t\int_\alpha^\beta R_0(a)\,\dd a.
\end{equation}
The total impulse is
\begin{equation}\label{eq:boundary-total-impulse}
 \mathcal I((0,\infty))=-\min_{a\in[m_0,m]}C(a).
\end{equation}
\end{enumerate}
\end{theorem}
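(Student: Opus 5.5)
The proof has four parts: the momentum formula from the velocity projection, the impulse identities from the minimum-value function, the weak boundary balance by a label computation, and the jump and total-impulse formulas.

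\emph{Total momentum.} By \eqref{eq:radial-actual-velocity}, $P(t)=\int_0^m V_t\,\dd a$ with $V_t=P_{H^0_{R_t}}U_0$. The constant function $\mathbf 1_{\{R_t>0\}}$ is $\sigma(R_t)$-measurable and vanishes on $\{R_t=0\}$, so it lies in $H^0_{R_t}$. Self-adjointness of the orthogonal projection then gives
\[
 P(t)=\langle V_t,\mathbf 1_{\{R_t>0\}}\rangle=\langle U_0,\mathbf 1_{\{R_t>0\}}\rangle=\int_{M(t)}^mU_0\,\dd a=C(m)-C(M(t)),
\]
since $\{R_t=0\}=(0,M(t))$ up to null sets by Lemma~\ref{lem:projection-zero} and Theorem~\ref{thm:absorption}. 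Right continuity of $P$ follows from that of $M$ and continuity of $C$.

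\emph{Impulse identities.} Take $s<t$, with $a=M(s)$ and $b=M(t)$, so $a\le b$ by the ordering of minimizers. The minimizing properties give $s(C(a)-C(b))\le B(b)-B(a)\le t(C(a)-C(b))$, and since $B(b)\ge B(a)$ this yields $C(b)\le C(a)$. Hence $P$ is nondecreasing. The bound $|C(b)-C(a)|\le L(b-a)$ gives $\mathcal I\le L\,\dd M$.

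For the weighted identity I would use $h(t)=B(M(t))+tC(M(t))$. Comparing minima at $s$ and $t$ gives
\[
 (t-s)\,C(M(t))\le h(t)-h(s)\le(t-s)\,C(M(s)).
\]
So $h$ is Lipschitz and $\dd h=C(M(t))\,\dd t$; this uses that $C\circ M$ is monotone and hence has at most countably many discontinuities. The BV product rule for the product of the continuous function $t$ with $C\circ M$ gives
\[
 \dd\bigl(tC(M)\bigr)=C(M)\,\dd t+t\,\dd(C\circ M).
\]
Subtracting, $\dd(B\circ M)=-t\,\dd(C\circ M)=t\,\mathcal I$. This identity holds for atomic and atomless parts alike, with no decomposition needed. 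Integrating over $(0,t]$ and using $B(M(0))=B(m_0)=0$ gives the cumulative identity.

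\emph{Weak boundary balance.} Write $\psi=\psi(t,0)\chi(r)+\tilde\psi$, where $\chi\in C_c^1$ equals $1$ near $0$. Then $\tilde\psi(t,0)=0$, so \eqref{eq:radial-momentum-weak} handles $\tilde\psi$, and it suffices to treat tests $g(t)\chi(r)$. I would compute in labels: $\int u\chi\,\dd\mu_t=\int_0^mV_t\chi(R_t)\,\dd a$. The main obstacle is this step: differentiating in time requires the analogous identity $\langle V_t,f(R_t)\rangle=\langle U_0,f(R_t)\mathbf 1_{\{R_t>0\}}\rangle$ and control of $\partial_t$ of $\chi(R_t)$.

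The cleanest route I would try first avoids differentiating. Choose $\chi_\varepsilon(r)=\chi(r/\varepsilon)$ and pass to the limit $\varepsilon\downarrow0$. In the $u^2\psi_r$ term, $\int u^2\chi_\varepsilon'\,\dd\mu_t$ is bounded by $L^2\varepsilon^{-1}\|\chi'\|_\infty\,\mu_t((0,2\varepsilon))$. This does not obviously vanish, so instead I would decompose $\psi=\psi(t,0)\chi_1+\tilde\psi$ with a fixed cutoff. I would then verify the identity
\[
 \int_0^\infty\!\!\int\bigl(u\,\partial_t(g\chi)+u^2g\chi'\bigr)\dd\mu_t\,\dd t+g(0)\!\int\! u_0\chi\,\dd\mu_0=-\int g\,\dd P
\]
by approximating $\chi$ with $\mathbf 1_{[0,\delta)}$-type cutoffs. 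As $\delta\to0$, the left side reduces to $\int g'P_\delta\,\dd t+g(0)P_\delta(0)$ plus flux terms, where $P_\delta$ is the momentum outside the $\delta$-ball. The flux across $r=\delta$ is controlled through the mass-conservation argument of Proposition~\ref{prop:sticking}. The limit is $\int g'P\,\dd t+g(0)P(0)=-\int_{(0,\infty)} g\,\dd P$ by Stieltjes integration by parts.

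\emph{Jumps and total impulse.} The jump formula \eqref{eq:boundary-impulse-jump} follows from $P(t)-P(t-)=C(\alpha)-C(\beta)$ together with the left-limit formula \eqref{eq:absorption-left-limit} and \eqref{eq:absorption-jump-balance}. The total impulse is $\lim_{t\to\infty}P(t)-P(0)=C(m_0)-C(c_*)=-\min_{[m_0,m]}C$ by Corollary~\ref{cor:asymptotic-absorption}, using $C(m_0)=0$.
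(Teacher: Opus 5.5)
\textbf{Assessment.} Your arguments for (i), for the impulse identities in (ii), and for (iii) are correct and follow the paper. You test the velocity projection against $\mathbf 1_{\{R_t>0\}}$, derive $\dd h=C(M(t))\,\dd t$ from the minimum-value function, apply the product rule, and invoke Corollary~\ref{cor:asymptotic-absorption}.

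\textbf{The gap.} It lies in the weak boundary balance \eqref{eq:boundary-momentum-weak}. Your reduction to tests $g(t)\chi(r)$ is right, but the cutoff argument you sketch does not prove it. With $\chi\approx\mathbf 1_{[0,\delta)}$, the term $\int u\chi\,\dd\mu_t$ is the momentum \emph{inside} the $\delta$-ball, not outside it. It tends to $u(t,0)\mu_t(\{0\})=0$. Hence, as $\delta\downarrow0$, all of $\mathcal L(g\chi)$ sits in the flux term across $r=\delta$. That term does not vanish; it carries the impulse. In Example~\ref{ex:collision}, with a cutoff dropping from one to zero near $r=\delta<1$, the flux term is approximately $-4g(3/2-\delta/2)$. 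So identifying its limit is the whole problem, not an error term. The cone-test argument of Proposition~\ref{prop:sticking} only gives monotonicity of the origin mass. It provides no estimate on the kinetic flux $u^2\mu_t$ across a fixed sphere, and it cannot identify that flux with $\dd P$.

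\textbf{The fix.} Note that $\mathcal L(g\chi)$ does not depend on the cutoff. Two cutoffs equal to one near zero differ by a test $g(\chi_1-\chi_2)$ that vanishes at $r=0$, and \eqref{eq:radial-momentum-weak} kills it. So send the cutoff scale the other way, which is the paper's Step~4. Take $\chi_R(r)=\chi(r/R)$ with $R\to\infty$ and $g$ supported in $[0,T]$. Your own flux estimate then becomes
\[
 \Bigl|\int_0^\infty g(t)\int u^2\chi_R'\,\dd\mu_t\,\dd t\Bigr|
 \leq R^{-1}\|\chi'\|_\infty L^2 m T\|g\|_\infty\longrightarrow0,
\]
because the total mass $m$ is finite. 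Dominated convergence sends the remaining terms to $\int_0^\infty g'(t)P(t)\,\dd t+g(0)P(0)$. Stieltjes integration by parts for the right-continuous BV function $P$ turns this into $-\int_{(0,\infty)}g\,\dd\mathcal I$. No flux across small spheres ever needs to be controlled.
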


\begin{proof}
\emph{Step 1. Total radial momentum and its monotone variation.}
Since $V_t\in H_{R_t}^0$ and $\mathbf1_{\{R_t>0\}}\in H_{R_t}^0$,
the push-forward relation in \eqref{eq:radial-projection} and the
orthogonal velocity projection in \eqref{eq:radial-actual-velocity} give,
for every $t\geq0$,
\[
 \begin{aligned}
 P(t)&=\int_0^m V_t\,\dd a
      =\int_0^m V_t\mathbf1_{\{R_t>0\}}\,\dd a\\
     &=\int_0^m U_0\mathbf1_{\{R_t>0\}}\,\dd a
      =\int_{M(t)}^m U_0\,\dd a.
 \end{aligned}
\]
At $t=0$ the projection leaves $U_0\in H_{R_0}^0$ unchanged; hence
\eqref{eq:boundary-total-momentum} also holds at the initial time.

The minimizing property of $M(s)$ and $M(t)$ yields
\begin{equation}\label{eq:boundary-minimizer-comparison}
 \begin{aligned}
 B(M(s))+sC(M(s))&\leq B(M(t))+sC(M(t)),\\
 B(M(t))+tC(M(t))&\leq B(M(s))+tC(M(s))
 \end{aligned}
 \qquad(0\leq s<t).
\end{equation}
Adding gives $(t-s)(C(M(t))-C(M(s)))\leq0$. Since $M$ is
nondecreasing and $C$ is $L$-Lipschitz,
\[
 0\leq P(t)-P(s)
 =C(M(s))-C(M(t))\leq L(M(t)-M(s)).
\]
Together with the continuity of $C$ and right continuity of $M$,
this proves the monotonicity and right continuity asserted in (i) and
$0\leq\mathcal I\leq L\,\dd M$. Moreover,
$C(M(0))=C(m_0)=0$ and $M(s)\to m_0$ as $s\downarrow0$;
thus $P(0)=C(m)$ and the momentum has no initial impulse jump.

\emph{Step 2. The minimum value and the weighted Stieltjes identity.}
To compute the weighted impulse without differentiating a possibly
jumping minimizing label,
use $h$ defined in
{\eqref{eq:minimum-value-function}}.
Since $M(t)$ minimizes $A_t$,
\[
 h(t)=B(M(t))+t g(t),\qquad g(t)=C(M(t)).
\]
The function $h$ is concave and Lipschitz, being a minimum of affine
functions with slopes in the bounded range of $C$.
The comparison in \eqref{eq:boundary-minimizer-comparison} implies
\[
 g(t)\leq\frac{h(t)-h(s)}{t-s}\leq g(s)
 \qquad(0\leq s<t).
\]
At continuity points of the monotone function $g$, the difference
quotients converge to $g$; consequently $\dd h=g\,\dd t$.
Both $g$ and $B\circ M$ are of bounded variation, so the product
rule on $(0,\infty)$ gives
\[
 \dd(B\circ M)
 =\dd h-\dd(tg)
 =g\,\dd t-(t\,\dd g+g\,\dd t)
 =t\,\dd P.
\]
Integration over $(s,t]$, followed by $s\downarrow0$, proves the
cumulative formula in \eqref{eq:boundary-impulse-measure}, since
$B(M(s))\to B(m_0)=0$.

\emph{Step 3. Atomic and total impulse.}
At a positive time, the two Stieltjes increments are
\[
 \mathcal I(\{t\})=C(\alpha)-C(\beta),\qquad
 t\mathcal I(\{t\})=B(\beta)-B(\alpha),
\]
which gives \eqref{eq:boundary-impulse-jump}.
Writing $M_\infty=\lim_{t\to\infty}M(t)$, we also obtain
\[
 \mathcal I((0,\infty))
 =\lim_{t\to\infty}(P(t)-P(0))
 =-C(M_\infty)
 =-\min_{a\in[m_0,m]}C(a),
\]
where the last equality uses Corollary~\ref{cor:asymptotic-absorption}.

\emph{Step 4. The radial scalar momentum balance.}
For the boundary identity, let $\mathcal L(\psi)$ denote the spacetime
integral and initial term in \eqref{eq:boundary-momentum-weak}. Set
$\vartheta(t)=\psi(t,0)$ and choose
$\chi_R(r)=\chi(r/R)$, where $\chi\in C_c^1([0,\infty))$ equals
one near zero. The test $\psi-\vartheta\chi_R$ vanishes at $r=0$.
Applying \eqref{eq:radial-momentum-weak} to this test gives
\[
 \begin{aligned}
 \mathcal L(\psi)=\mathcal L(\vartheta\chi_R)
 &=\int_0^\infty \vartheta'(t)\int u\chi_R\,\dd\mu_t\,\dd t
   +\int_0^\infty \vartheta(t)\int u^2\chi_R'\,\dd\mu_t\,\dd t\\
 &\quad+\vartheta(0)\int u_0\chi_R\,\dd\mu_0.
 \end{aligned}
\]
If $\vartheta$ is supported in $[0,T]$, the absolute value of the flux
term is at most
\[
 R^{-1}\|\chi'\|_\infty L^2mT\|\psi(\cdot,0)\|_\infty,
\]
which tends to zero. The other terms converge by dominated
convergence, since the mass is finite and $|u|\leq L$.
Passing to the limit and integrating the right-continuous BV
function $P$ by parts gives
\[
 \mathcal L(\psi)
 =\int_0^\infty P(t)\vartheta'(t)\,\dd t+P(0)\vartheta(0)
 =-\int_{(0,\infty)}\vartheta(t)\,\mathcal I(\dd t).
\]
This is \eqref{eq:boundary-momentum-weak}.
\end{proof}

This completes the proof of
Theorem~\ref{thm:main-absorption};
Corollary~\ref{cor:impulse-types} below further describes the atomless and
atomic parts of the impulse.

Thus the scalar equation has boundary source $\mathcal I\otimes\delta_0$:
removing the incoming net negative radial momentum increases $P$.
This does not add a source to the full-space vector momentum equation,
since the scalar radial test obtained by angular averaging
of a full-space vector test satisfies $\psi(t,0)=0$.

Arrival times give a common representation of the absorption and
impulse measures. Pushing forward the mass of labels arriving at positive
finite times gives $\dd M$; weighting these labels by $R_0$ gives
$\dd(B\circ M)=t\mathcal I$.
This separates their atomless parts from the contribution of an incoming
label interval at a jump.

\begin{corollary}[Atomless and atomic impulse]\label{cor:impulse-types}
Under the assumptions of Theorem~\ref{thm:boundary-momentum}, let
\[
 \nu=\dd M=\nu_{\mathrm{ac}}+\nu_{\mathrm{sc}}
                 +\sum_{t>0}\Delta M(t)\delta_t
\]
be the Lebesgue decomposition of the absorption measure on
$(0,\infty)$. The atomless parts of the impulse satisfy
\begin{equation}\label{eq:boundary-impulse-types}
 \mathcal I_{\mathrm{ac}}
       =\frac{R_0(M(t))}{t}\nu_{\mathrm{ac}},\qquad
 \mathcal I_{\mathrm{sc}}
       =\frac{R_0(M(t))}{t}\nu_{\mathrm{sc}},
\end{equation}
and its atoms are given by \eqref{eq:boundary-impulse-jump}.
The measure identities in \eqref{eq:boundary-impulse-types} are
unaffected by changes to $R_0$ on label-null sets and by arbitrary
finite assignments at $0$ and $m$. Moreover, $\mathcal I$ and $\dd M$ are
mutually absolutely continuous on $(0,\infty)$.
\end{corollary}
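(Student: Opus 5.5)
Starting point: the measure identity $t\,\mathcal I=\dd(B\circ M)$ from Theorem~\ref{thm:boundary-momentum}. I decompose both sides into atomic and atomless parts, and compute $\dd(B\circ M)$ on the atomless part by a chain rule for the composition of the Lipschitz function $B$ with the nondecreasing right-continuous $M$.

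\emph{Atoms.} Since $t>0$ is a smooth positive factor, $\mathcal I=t^{-1}\dd(B\circ M)$. Atoms of $B\circ M$ occur exactly at jumps of $M$, because $B$ is continuous and strictly increasing on $[m_0,m]$. Their sizes are then given by \eqref{eq:boundary-impulse-jump}.

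\emph{Atomless part.} Let $\nu^c=\nu_{\mathrm{ac}}+\nu_{\mathrm{sc}}$ be the continuous part of $\dd M$. I would show
\[
 \dd(B\circ M)^{c}=R_0(M(t))\,\nu^c .
\]
The plan is to use the generalized inverse: the pushforward of $\nu$ under $M$ relates to Lebesgue measure in labels. Concretely, let $\tau(a)=\inf\{t:M(t)>a\}$ for labels $a\in(m_0,M_\infty)$. For every bounded Borel $f$ on $(0,\infty)$,
\[
 \int f\,\dd(B\circ M)=\int f(\tau(a))R_0(a)\,\dd a ,
 \qquad
 \int f\,\dd M=\int f(\tau(a))\,\dd a .
\]
This is the standard change of variables for Stieltjes measures of monotone functions, applied to the composition $B\circ M$ with $B'=R_0$. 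Labels $a$ that fall inside jump intervals $(M(t-),M(t))$ have $\tau(a)=t$ and produce the atoms. For the remaining labels, $M(\tau(a))=a$ except on a countable set (right endpoints of flat stretches are countable in $a$, and the set is null). So on the continuous part $f(\tau(a))R_0(a)=f(\tau(a))R_0(M(\tau(a)))$, which yields $\dd(B\circ M)^c=R_0(M(\cdot))\nu^c$. Dividing by $t$ and splitting $\nu^c$ by its Lebesgue decomposition gives \eqref{eq:boundary-impulse-types}, because multiplication by a nonnegative Borel density preserves mutual singularity.

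\emph{Invariance under changing $R_0$.} If $R_0$ is changed on a label-null set $E$, the set $\{t:M(t)\in E\}$ is contained in the preimage under $\tau$ of $E$ together with jump times. On the continuous part this set is $\nu^c$-null by the change of variables, since its $\nu^c$-mass is at most $|E|=0$. Jump times carry no $\nu^c$-mass. Finite values at $0$ and $m$ matter only on $\{M=m_0\}\cup\{M=m\}$ if $m_0=0$. The continuous part of $\nu$ gives zero mass there: $\nu^c(\{t>0:M(t)=c\})=0$, because $M$ is constant on that set apart from one endpoint.

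\emph{Mutual absolute continuity.} $\mathcal I\le L\,\dd M$ is already known. For the converse I need $R_0(M(t))>0$ $\nu^c$-a.e. This holds because $R_0>0$ a.e. on $(m_0,m)$, and $M(t)>m_0$ for $\nu$-a.e. $t>0$ (the set where $M=m_0$ carries no $\dd M$ mass). For atoms, $\mathcal I(\{t\})=t^{-1}\int_\alpha^\beta R_0>0$ whenever $\beta>\alpha$. Hence $\dd M\ll\mathcal I$.

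The main obstacle is the rigorous change of variables through $\tau$, together with the null-set bookkeeping that identifies $R_0(a)$ with $R_0(M(\tau(a)))$ for a.e. non-jump label. I would first prove the identity for $f=\mathbf 1_{(s,t]}$, where it reduces to $B(M(t))-B(M(s))=\int_{M(s)}^{M(t)}R_0$, and then extend by a monotone class argument.
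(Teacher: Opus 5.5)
Your proposal is correct and follows essentially the same route as the paper: the arrival map $\tau$, the push-forward identities for $\dd M$ and $\dd(B\circ M)$ checked on intervals $(s,t]$, and the replacement of $R_0(a)$ by $R_0(M(\tau(a)))$ off the jump fibres. Two small points. First, your stated containment of $\{t:M(t)\in E\}$ fails on intervals where $M$ is constant, although these carry no $\nu^c$-mass. It is cleaner to read $\nu^c(M^{-1}(E))=|G\cap E|=0$ directly from $\nu^c=\tau_\#(\dd a|_G)$ and $M\circ\tau=\mathrm{id}$ on the set $G$ of non-jump labels. Second, the paper gets mutual absolute continuity more directly: $\dd a$ and $R_0\,\dd a$ on $(m_0,M_\infty)$ have the same null sets, and push-forward under $\tau$ preserves this.
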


\begin{proof}
Set $M_\infty=\lim_{t\to\infty}M(t)$ and define the arrival map
\[
 \tau(a)=\inf\{t\geq0:M(t)\geq a\},
 \qquad m_0<a<M_\infty.
\]
This map is nondecreasing and takes values in $(0,\infty)$:
right continuity at zero excludes $\tau(a)=0$, and
$a<M_\infty$ ensures finite arrival. The inverse relation gives,
up to label endpoints,
\[
 \{a:s<\tau(a)\leq t\}=(M(s),M(t)]
 \qquad(0\leq s<t).
\]
Consequently
\[
 \begin{aligned}
 \int_{\{s<\tau\leq t\}}\dd a&=M(t)-M(s),\\
 \int_{\{s<\tau\leq t\}}R_0(a)\,\dd a
 &=B(M(t))-B(M(s)).
 \end{aligned}
\]
Equality on these intervals identifies the two Stieltjes measures:
\begin{equation}\label{eq:arrival-pushforwards}
 \dd M=\tau_\#\bigl(\dd a|_{(m_0,M_\infty)}\bigr),\qquad
 \dd(B\circ M)
 =\tau_\#\bigl(R_0(a)\,\dd a|_{(m_0,M_\infty)}\bigr).
\end{equation}

For a jump time $t$, the arrival-time fibre of $\tau$ is
$(M(t-),M(t))$ up to endpoints. Removing all such intervals and
their endpoints leaves the labels for the atomless parts; on those
labels $M(\tau(a))=a$. In the weighted identity in
\eqref{eq:arrival-pushforwards}, one may
therefore replace $R_0(a)$ by $R_0(M(\tau(a)))$, obtaining
\[
 \bigl[\dd(B\circ M)\bigr]_{\mathrm{atomless}}
 =R_0(M(t))\bigl[\dd M\bigr]_{\mathrm{atomless}}.
\]
The weighted push-forward in \eqref{eq:arrival-pushforwards} also shows
that changing $R_0$ on a label-null set leaves the atomless measure
identity unchanged. Combining the atomless identity with
$t\mathcal I=\dd(B\circ M)$ from \eqref{eq:boundary-impulse-measure} gives
\eqref{eq:boundary-impulse-types}; the arrival-time fibres at jumps give
\eqref{eq:boundary-impulse-jump}.
Since $R_0>0$ almost everywhere on $(m_0,m)$, the two label measures
in \eqref{eq:arrival-pushforwards} have the same null sets. Their
push-forwards are therefore mutually absolutely continuous, as are
$\mathcal I$ and $\dd M$ after multiplication by $1/t$ on
$(0,\infty)$.
\end{proof}

At an absorption jump, the impulse involves an average over the
whole incoming label interval. It cannot in general be replaced by
$-U_0(M(t))\Delta M(t)$. The atomless formulas likewise require no
pointwise representative of $U_0$ at the label $M(t)$.

\subsection{Stability of absorbed mass}\label{subsec:absorption-stability}

Convergence of projected positions alone does not control the mass at
the origin. For this observable we use the temporal minimizer formula.
Uniform convergence of the primitives and convergence of the initial
origin masses control the moving minimization intervals.
A unique limiting minimizer gives convergence at positive
continuity times; the extreme minimizers give the two-sided bound at jump
times.

\begin{proof}[Proof of Proposition~\ref{thm:main-stability}]
\emph{Step 1. Uniform convergence of the primitives.}
Write $M_k(t)=\mu_t^k(\{0\})$ and $M(t)=\mu_t(\{0\})$.
For $A_t^k(a)=\int_0^a(R_0^k+tU_0^k)\,\dd b$,
Cauchy--Schwarz gives uniform convergence of the primitives on every
bounded time interval:
\begin{equation}\label{eq:primitive-uniform-convergence}
 \sup_{0\leq t\leq T}\|A_t^k-A_t\|_{C([0,m])}
 \leq\sqrt m\bigl(\|R_0^k-R_0\|_{L^2(0,m)}
                 +T\|U_0^k-U_0\|_{L^2(0,m)}\bigr)
 \longrightarrow0.
\end{equation}

\emph{Step 2. Moving minimization intervals and the two-sided bound.}
Fix $t>0$ and a subsequence for which $M_k(t)\to b$.
The inequalities $M_k(t)\geq m_0^k$ imply $b\in[m_0,m]$.
Given $a\in[m_0,m]$, the labels
$a_k=\max\{a,m_0^k\}$ belong to the corresponding minimization
intervals and converge to $a$. Thus
\[
 A_t(b)
 =\lim_{k\to\infty}A_t^k(M_k(t))
 \leq\lim_{k\to\infty}A_t^k(a_k)
 =A_t(a).
\]
The equalities use \eqref{eq:primitive-uniform-convergence} and
continuity of $A_t$; the inequality is the minimizing property of
$M_k(t)$. Every cluster point consequently lies in
$\Argmin_{[m_0,m]}A_t$. By \eqref{eq:absorption-left-limit} and
\eqref{eq:absorption-argmin}, the extreme
elements of this set are $M(t-)$ and $M(t)$, which proves
the two-sided bound in Proposition~\ref{thm:main-stability}. At positive
continuity times $M(t-)=M(t)$, so $M_k(t)\to M(t)$; at zero the
required convergence is
$m_0^k\to m_0$.

\emph{Step 3. Convergence of the Stieltjes absorption measures.}
Since $M$ is monotone, its discontinuity set is at most countable.
The pointwise convergence just proved and $0\leq M_k\leq m$
therefore give, for every $\vartheta\in C_c^1((0,\infty))$,
\[
 \begin{aligned}
 \int\vartheta\,\dd M_k
 &=-\int_0^\infty\vartheta'(t)M_k(t)\,\dd t\\
 &\longrightarrow-\int_0^\infty\vartheta'(t)M(t)\,\dd t
 =\int\vartheta\,\dd M.
 \end{aligned}
\]
Here the identities are Stieltjes integration by parts and the
limit is dominated convergence. The common bound
$\dd M_k((0,\infty))\leq m$ extends this convergence, by uniform
approximation, to every $\vartheta\in C_c((0,\infty))$.

\end{proof}

When $m_0=0$, the initial-origin-mass condition follows already
from the convergence of the quantiles: $L^2$ convergence of $R_0^k$
alone implies
$\limsup_k m_0^k\leq m_0$. Indeed, if $m_0^k\geq c>m_0$ along a
subsequence, then $R_0^k=0$ almost everywhere on $(0,c)$ and
\[
 0<\int_0^c R_0(a)^2\,\dd a
 =\int_0^c|R_0^k(a)-R_0(a)|^2\,\dd a
 \leq\|R_0^k-R_0\|_{L^2(0,m)}^2\longrightarrow0,
\]
where strict positivity follows from $|\{R_0=0\}|=m_0<c$.
Thus $m_0=0$ forces $m_0^k\to0$.

The representation \eqref{eq:radial-projection} also gives, for every $T<\infty$,
\begin{equation}\label{eq:radial-position-stability}
 \sup_{0\leq t\leq T}\|R_t^k-R_t\|_{L^2(0,m)}
 \leq\|R_0^k-R_0\|_{L^2(0,m)}
       +T\|U_0^k-U_0\|_{L^2(0,m)},
\end{equation}
by nonexpansivity of $P_{\Kplus}$.
The stationary-shell example in the introduction shows
that this estimate alone does not control the mass of the zero level set.
The initial-origin-mass hypothesis
in Proposition~\ref{thm:main-stability} cannot in general be omitted.

\subsection[Examples and prescribed absorption measures]{Examples and prescribed absorption measures}\label{subsec:absorption-examples}

The following examples distinguish absolutely continuous absorption from a shell
collision followed by a jump at the origin. The final construction
prescribes the absorption measure, including singular continuous absorption.
Spatial mass is recovered
from the physical radial mass by \eqref{eq:physical-radial-mass}.

\begin{example}[Inward transport of a smooth density]\label{ex:inward}
Let $f\in C_c^\infty((1,2))$ be nonnegative and nonzero,
$\mu_0(\dd r)=\omega_n r^{n-1}f(r)\dd r$, where
$\omega_n=|\mathbb S^{n-1}|$, and let $u_0=-1$ on its support.
The radial map from initial radius $s$ is $(s-t)_+$. Hence
\[
 M(t)=\omega_n\int_0^t s^{n-1}f(s)\dd s,\qquad
 \mu_t|_{(0,\infty)}=\omega_n(r+t)^{n-1}f(r+t)\dd r.
\]
The absorbed atom is stationary and the remaining mass has velocity
$-1$. Absorption is absolutely continuous, with
$M'(t)=\omega_n t^{n-1}f(t)$.
If $f(t)>0$, the unweighted spatial radial density is
\[
 f_t(r):=\frac{(r+t)^{n-1}}{r^{n-1}}f(r+t)
 \sim t^{n-1}f(t)r^{1-n}\quad(r\downarrow0).
\]
For $n\ge2$ this unweighted measure $f_t(r)\dd r$ in the
radius variable is not locally finite at zero;
the physical radial mass $\mu_t$ and its spatial reconstruction are.
\end{example}

\begin{example}[A shell collision followed by absorption]\label{ex:collision}
Take two unit masses at radii $2$ and $4$ with velocities $-1$ and $-3$.
They collide at time $1$ and radius $1$. The resulting shell has mass
$2$, velocity $-2$, and reaches the origin at time $3/2$. Thus
\[
 \mu_t=
 \begin{cases}
  \delta_{2-t}+\delta_{4-3t},&0\le t<1,\\
  2\delta_{3-2t},&1\le t<3/2,\\
  2\delta_0,&t\ge3/2.
 \end{cases}
\]
At $t=3/2$, $A_t$ attains its minimum value at $a=0$ and $a=2$ and is
strictly larger between them. Formula~\eqref{eq:absorption-left-limit} gives
$M(3/2-)=0$, whereas \eqref{eq:absorption-argmin} gives $M(3/2)=2$.
Thus the minimizer set need not be an interval. The total radial momentum is $-4$
until time $3/2$ and is zero thereafter; hence
$\mathcal I=4\delta_{3/2}$. The positive-radius collision contributes
no boundary impulse.
\end{example}

As a complement to these forward formulas, one can fix a smooth initial
mass and choose a bounded Borel velocity to prescribe its absorption
measure. The stopped-trajectory form also appears in
Hynd--Tudorascu~\cite[(2.4)--(2.5)]{HT2024}. Here
ordered arrival times make the resulting free-position map nondecreasing
wherever it is positive, so its positive part belongs to
$\Kplus$ and the projection reduces to truncation at zero.

\begin{proposition}[Prescribed absorption measures]\label{prop:absorption-realization}
Let $\mu_0=f(r)\dd r$, where $f\in C_c^\infty((1,2))$ is
nonnegative and $\int_1^2f(r)\,\dd r=1$, and let $R_0$ be its
quantile map. Given a probability measure $\nu$ supported in
$[T_-,T_+]$, with $0<T_-\leq T_+<\infty$, let
$T:(0,1)\to[T_-,T_+]$ denote its nondecreasing quantile and define
\begin{equation}\label{eq:prescribed-arrival-velocity}
 U_0(a)=-\frac{R_0(a)}{T(a)}.
\end{equation}
\begin{samepage}
This velocity has the form $U_0=u_0\circ R_0$ almost everywhere for
a bounded Borel radial velocity $u_0$ satisfying $u_0(0)=0$.
For every $t\geq0$, the radial representation of the corresponding
entropy solution has the following Lagrangian position, right derivative in $L^2(0,1)$,
and absorption law:
\begin{align}
 R_t(a)&=R_0(a)\left(1-\frac{t}{T(a)}\right)_+,
 &V_t(a)&=U_0(a)\mathbf1_{\{T(a)>t\}},
       \label{eq:prescribed-arrival-map}\\
 M(t)&=\nu((0,t]),
 &\dd M&=\nu.
       \label{eq:prescribed-absorption-law}
\end{align}
The position and velocity identities hold in $L^2(0,1)$, and the
Stieltjes identity holds on $(0,\infty)$.
\end{samepage}
\end{proposition}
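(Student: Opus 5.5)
The plan is to verify the three claims in order: admissibility of the data, the explicit projection, and the absorption law. First, $f$ is smooth and positive on some open set with $\int f=1$, and its support lies in $(1,2)$. Hence $R_0$ takes values in $[1,2]$ and is nondecreasing. It is also continuous and strictly increasing wherever $f>0$, since $\mu_0$ has no atoms; on gaps of the support of $f$ the quantile jumps. Thus $R_0$ is injective on $(0,1)$ up to a null set, and $\sigma(R_0)$ is the full Borel $\sigma$-algebra modulo null sets.

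To write $U_0=u_0\circ R_0$, set $u_0(r)=-r/T(F_0(r))$ with $F_0(r)=\mu_0([0,r])$ on the support. Extend $u_0$ by zero elsewhere, in particular $u_0(0)=0$. Since $F_0(R_0(a))=a$ for a.e.\ $a$, this gives $U_0=u_0\circ R_0$ a.e. Boundedness follows from $|U_0|\le 2/T_-$, and Borel measurability from that of $T$ and $F_0$. In particular the data meet the hypotheses of Theorem~\ref{thm:main-absorption}, with $m=1$, $m_0=0$ and finite second moment.

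Second, set $F=R_0+tU_0=R_0(1-t/T)$ and $R=F_+$. Because $R_0$ and $T$ are nondecreasing and nonnegative, the factor $(1-t/T(a))_+$ is nonincreasing in $a$. So monotonicity of $R$ is not immediate, and this is the step I expect to be the main obstacle. The right route is to avoid proving $R\in\Kplus$ by a product rule. Instead, write $R(a)=\bigl(R_0(a)-tR_0(a)/T(a)\bigr)_+$ and observe that $\{T>t\}$ is a final interval $(a_t,1)$. On it one needs $R_0(1-t/T)$ nondecreasing. For $a<b$ in $(a_t,1)$, use $T(a)\le T(b)$ to get $1-t/T(a)\le 1-t/T(b)$, and both factors are positive. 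Then $R_0(a)(1-t/T(a))\le R_0(b)(1-t/T(b))$, since both $R_0$ and the bracket are nonnegative and nondecreasing. So in fact the bracket is nondecreasing, because $T$ increasing makes $t/T$ decrease; the worry above dissolves. On $(0,a_t]$ we have $R=0$, so $R\in\Kplus$.

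For the projection, verify the variational inequality $\int_0^1(F-R)(K-R)\,\dd a\le0$ for all $K\in\Kplus$. Here $F-R=-F_-$ is $\le0$ and supported where $R=0$, so $(F-R)(K-R)=-F_-K\le0$. Hence $R_t=P_{\Kplus}(R_0+tU_0)$ by Theorem~\ref{thm:projection}(i), which gives \eqref{eq:prescribed-arrival-map} for positions.

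For the velocity, compute the right derivative directly. $R_t(a)$ is piecewise linear in $t$ with slope $U_0(a)\mathbf 1_{\{T(a)>t\}}$ for the right derivative, and the difference quotients are bounded by $|U_0|$. Dominated convergence then gives the $L^2$ right derivative, matching \eqref{eq:radial-actual-velocity}.

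Third, $M(t)=|\{R_t=0\}|$. Since $R_0\ge1$, this equals $|\{a:T(a)\le t\}|=\nu((0,t])$ by the quantile property of $T$. Alternatively one can check it against Theorem~\ref{thm:absorption}(i). Right continuity is consistent, and because $M(0)=0=\nu(\{0\})$ (as $T_->0$), the Stieltjes measure of $M$ on $(0,\infty)$ is $\nu$.
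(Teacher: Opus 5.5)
Your proof is correct and follows the paper's argument essentially step by step: the same Borel velocity $u_0(r)=-r/T(F_0(r))$, membership $(R_0+tU_0)_+\in\Kplus$ as a product of nonnegative nondecreasing factors, identification of the projection using only nonnegativity of elements of $\Kplus$ (you via the variational inequality, the paper via the pointwise bound $|F_t-(F_t)_+|\le|F_t-R|$ for every $R\ge0$), dominated convergence for the right derivative, and $M(t)=|\{T\le t\}|=\nu((0,t])$. Delete your opening claim that $(1-t/T)_+$ is nonincreasing, since it is nondecreasing as you then observe. Also define $u_0$ only where $0<F_0(r)<1$ rather than on the whole support, so that $T(F_0(r))$ is defined.
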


\begin{proof}
\emph{Step 1. Compatibility with the initial mass.}
The distribution function $F_0(r)=\int_0^r f(s)\,\dd s$ satisfies
$F_0(R_0(a))=a$ for almost every $a\in(0,1)$, since $\mu_0$ is
atomless. Define the radial velocity by
\[
 u_0(r)=
 \begin{cases}
 -r/T(F_0(r)),&1\leq r\leq2\text{ and }0<F_0(r)<1,\\
 0,&\text{otherwise}.
 \end{cases}
\]
The functions $F_0$ and $T$ are Borel and $T\geq T_->0$; hence
$u_0$ is Borel, $|u_0|\leq2/T_-$, and $u_0(0)=0$.
Pulling back by $R_0$ gives
\[
 u_0(R_0(a))
 =-\frac{R_0(a)}{T(F_0(R_0(a)))}
 =-\frac{R_0(a)}{T(a)}
 =U_0(a)
 \quad\text{for almost every }a\in(0,1).
\]
Thus the prescribed Lagrangian velocity is compatible with the
initial mass, which is atomless.

\emph{Step 2. Identification of the position projection.}
For a fixed $t\geq0$, write $F_t=R_0+tU_0$. Its positive part is
\[
 (F_t)_+=R_0\left(1-\frac tT\right)_+\in\Kplus,
\]
because both factors are nonnegative and nondecreasing. Pointwise
projection onto $[0,\infty)$ gives, for every $R\in\Kplus$,
\[
 \|F_t-(F_t)_+\|_{L^2(0,1)}^2
 \leq\|F_t-R\|_{L^2(0,1)}^2.
\]
Since $(F_t)_+$ itself belongs to $\Kplus$, it equals
$P_{\Kplus}F_t$. Theorem~\ref{thm:projection} therefore identifies
it with $R_t$.

\emph{Step 3. The right derivative and absorption law.}
For $h>0$, the stopped linear trajectories satisfy
\[
 \left|\frac{R_{t+h}(a)-R_t(a)}{h}\right|\leq|U_0(a)|,
 \qquad
 \frac{R_{t+h}(a)-R_t(a)}{h}
 \longrightarrow U_0(a)\mathbf1_{\{T(a)>t\}}.
\]
The pointwise limit is zero also when $T(a)=t$. Since
$U_0\in L^2(0,1)$, dominated convergence proves the right-derivative
identity in \eqref{eq:prescribed-arrival-map}.
Finally, $R_0>0$ almost everywhere and $T_\#\dd a=\nu$, so
\[
 M(t)=|\{a:R_t(a)=0\}|
     =|\{a:T(a)\leq t\}|
     =\nu((0,t]).
\]
The right continuity of $M$ and $M(0)=0$ then identify its
Stieltjes measure on $(0,\infty)$ with $\nu$.
\end{proof}

For example, if $\nu_C$ is the Cantor probability measure on $[0,1]$
and $S(s)=2+s$, choosing $\nu=S_\#\nu_C$ gives purely singular
continuous absorption with the same smooth initial mass. Choosing a
mixture instead gives
\begin{equation}\label{eq:three-absorption-components}
 \dd M=\nu=\tfrac13\mathbf1_{(2,3)}(t)\,\dd t
                  +\tfrac13 S_\#\nu_C+\tfrac13\delta_{5/2}.
\end{equation}
Thus all three parts of the absorption measure can be nonzero for a
single solution; no smoothness of the initial velocity is asserted.

\begingroup
\renewcommand{\addcontentsline}[3]{}
\section*{Use of generative AI}

During the preparation of this manuscript, the authors used ChatGPT (OpenAI) to assist with language editing, \LaTeX{} formatting, and the organization, review and clarification of the mathematical exposition. The authors independently assessed and verified the suggestions incorporated into the manuscript and take full responsibility for its content, including the mathematical statements, computations, proofs and references.

\par
\endgroup

\end{document}